\newif\ifital  \italfalse  
\newif\ifdraft  \draftfalse  
\newif\ifelsevier  \elsevierfalse  
\newcommand{\JSLandscape}%
   {\ifital \thispagestyle{empty}\mbox{ }\clearpage%
       \addtocounter{page}{-1}\fi}
\newlength{\augwidth}\setlength{\augwidth}{0.4cm}
\newlength{\augheight}\setlength{\augheight}{.5cm}
\theoremstyle{plain}
\newtheorem{theorem}{Theorem}[section]
\newtheorem{corollary}[theorem]{Corollary}
\newtheorem{lemma}[theorem]{Lemma}
\newtheorem{proposition}[theorem]{Proposition}
\newtheorem{definition}[theorem]{Definition}
\theoremstyle{definition}
\newtheorem{remark}[theorem]{Remark}
\newtheorem{assumption}[theorem]{Assumption}
\newtheorem*{notation*}{Notation}
\newtheorem{example}[theorem]{Example}
\newenvironment{proofss}[1][\proofname]{\par
  \normalfont \topsep6\p@\@plus6\p@\relax
  \trivlist
  \item[\hskip\labelsep
        \itshape
    #1\@addpunct{.}]\ignorespaces
}{%
  \endtrivlist\@endpefalse
}
\newcounter{hours}\newcounter{minutes}
\newcommand\printtime%
\newif\ifnote  \notefalse  
\newcommand{\NEM}[1]%
   {\ifnote%
    \marginpar[\begin{flushright}%
               {\sl {\scriptsize #1}}%
               \end{flushright}]%
              {\begin{flushleft}%
               {\sl {\scriptsize #1}}%
               \end{flushleft}}%
	\fi
	}%
\newcommand{\NeM}[1]{\NEM{#1}}
\newcommand{\CMT}[1]%
   {
    \par
    \noindent
    XXXXXX\PushLine  
{\small\textsl{Commentaire trop grand pour rentrer dans la marge...}} 
    \PushLine XXXXXX
    \par
    {\small{\sl#1}}
    \par
    \noindent
    XXXXX\PushLine 
{\small\textsl{...mais ce n'est pas une raison pour ne rien \'ecrire}} 
    \PushLine XXXXXX\\
    }%
\newcommand{\cmt}[1]%
   {
    \smallskip
    \par
    \noindent
	XXX\PushLine \textsc{#1}
	\PushLine XXX
	}%
\newcommand{\assum}[1]{Assumption~\ref{ass.#1}}
\newcommand{\corol}[1]{Corollary~\ref{c.#1}}
\newcommand{\defin}[1]{Definition~\ref{d.#1}}
\newcommand{\equat}[1]{Equation~(\ref{q.#1})}
\newcommand{\equnm}[1]{(\ref{q.#1})\xspace}
\newcommand{\examp}[1]{Example~\ref{e.#1}}
\newcommand{\exemp}[1]{Example~\ref{e.#1}}
\newcommand{\figur}[1]{Figure~\ref{f.#1}}
\newcommand{\lemme}[1]{Lemma~\ref{l.#1}}
\newcommand{\propo}[1]{Proposition~\ref{p.#1}}
\newcommand{\remar}[1]{Remark~\ref{r.#1}}
\newcommand{\theor}[1]{Theorem~\ref{t.#1}}
\newcommand{\secti}[1]{Sec.~\ref{s.#1}}
\newcommand{\EoP}{\hbox{}\hfill\qedsymbol\hbox{}}%
\newcommand{\dex}[1]{(#1)}
\newcommand{\smallO}[1]%
   {\ensuremath{\mathop{}\mathopen{}o\mathopen{}\left(#1\right)}}
\def\le{\leqslant}
\def\leq{\leqslant}
\def\ge{\geqslant}
\def\geq{\geqslant}
\newcommand{\medskipneg}{\vspace*{-2ex}} 
\newcommand{\PushLine}{\hbox{}\hfill\hbox{}}
\newcommand{\Defi}[2]{\left\{#1\xmd\xmd\middle|\xmd\xmd#2\right\}}    
\newlength{\retraita}\setlength{\retraita}{1.5\parindent}
\newlength{\listespa}\setlength{\listespa}{.8em}
\newcommand{\EnumLbl}[1]{\rm (#1)}%
\newcommand{\jsListe}[1]%
    {\noindent\makebox[\retraita][r]{\EnumLbl{#1}}%
     \hspace*{\listespa}\ignorespaces}
\newcommand{\tha}{\jsListe{a}}
\newcommand{\thb}{\jsListe{b}}
\newcommand{\thc}{\jsListe{c}}
\newcommand{\thi}{\jsListe{i}}
\newcommand{\thii}{\jsListe{ii}}
\newcommand{\thiii}{\jsListe{iii}}
\newcommand{\thiv}{\jsListe{iv}}
\newcommand{\fa}{\forall}
\newcommand{\bk}{\mathrel{\backslash }}
\newcommand{\e}{\text{\quad}}                 
\newcommand{\ee}{\text{\qquad}}               
\newcommand{\eee}{\text{\qquad \qquad}} 
\newsavebox{\InterSymbolSpace}
\savebox{\InterSymbolSpace}{\hspace{0.125em}}
\newsavebox{\SideFormulaSpace}
\savebox{\SideFormulaSpace}{\hspace{0.2em}}
\newcommand{\msp}{\usebox{\SideFormulaSpace}} 
\newcommand{\xmd}{\usebox{\InterSymbolSpace}} 
\newcommand{\eqpnt}{\makebox[0pt][l]{\: .}}
\newcommand{\eqvrg}{\makebox[0pt][l]{\: ,}}
\newcommand{\EqVrgInt}{\: , \e }
\newcommand{\EqVrg}{\: ,}
\newcommand{\EqPnt}{\: .}
\newcommand{\quantvrg}{\, , \;}
\newcommand{\quantsp}{\ee }
\newcommand{\quantsmsp}{\e }
\newcommand{\x}{\! \times \!}
\newcommand{\LatinLocution}[1]{{\itshape #1}\xspace}
\newcommand{\cf}{\LatinLocution{cf.}}
\newcommand{\eg}{\LatinLocution{e.g.}}
\newcommand{\etc}{\LatinLocution{etc.}} 
\newcommand{\ie}{{that is, }}
\newcommand{\via}{via\xspace}
\newcommand{\Cmbb}{\mathbb{C}} 
\newcommand{\Nmbb}{\mathbb{N}}
\newcommand{\Qmbb}{\mathbb{Q}}
\newcommand{\Rmbb}{\mathbb{R}}
\newcommand{\Zmbb}{\mathbb{Z}}
\newcommand{\Ac}{\mathcal{A}}
\newcommand{\Ec}{\mathcal{E}}
\newcommand{\Fc}{\mathcal{F}}
\newcommand{\Ic}{\mathcal{I}}
\newcommand{\Kc}{\mathcal{K}}
\newcommand{\Tc}{\mathcal{T}}
\newcommand{\ShiftInd}[1]{\raisebox{-0.3ex}{$\scriptstyle{#1}$}}
\newcommand{\Rat}{\mathrm{Rat}\,}
\newcommand{\jsAutUn}[1]%
   {\mbox{$\left\langle \thinspace #1 \thinspace \right\rangle $}}
\newcommand{\aut}[1]{\jsAutUn{#1}} 
\newcommand{\auta}{\jsAutUn{Q,A,E,I,T}}
\newcommand{\jsleq}{\leqslant }
\newcommand{\act}{\mathbin{\boldsymbol{\cdot}}}
\newcommand{\matmul}{\mathbin{\cdot}}
\DeclareMathOperator{\petitoop}{\mathrm{o}}
\DeclareMathOperator{\grandoop}{\mathrm{O}}
\newcommand{\petito}[1]{\petitoop\!\left(#1\right)}
\newcommand{\grando}[1]{\grandoop\!\left(#1\right)}
\newcommand{\eulnum}{\mathrm{e}}
\renewcommand{\epsilon}{\varepsilon}
\newcommand{\CompAuto}[1]{L(#1)}
\newcommand{\C}{\Cmbb}
\newcommand{\N}{\Nmbb}
\newcommand{\Q}{\Qmbb}
\newcommand{\R}{\Rmbb}
\newcommand{\Z}{\Zmbb}
\newcommand{\lit}[1]{`$#1$'}
\newcommand{\Ae}{A^{*}}
\newcommand{\Ap}{A_{p}}
\newcommand{\Ape}{\Ap^{*}}
\newcommand{\Ab}{A_{\beta}}
\newcommand{\lexp}[1]{\vphantom{a}^{#1}}
\newcommand{\lom}[1]{\lexp{\omega}#1\xmd}
\newcommand{\lomA}{\lom{\!\!A}}
\newcommand{\lomz}{\lom{0}}
\newcommand{\Lp}{L_{p}}
\newcommand{\Lq}{L_q}
\newcommand{\Lb}{L_{\beta}}
\newcommand{\LG}{L_{G}}
\newcommand{\Tp}{\Tc_{p}}
\newcommand{\Tcp}{\Tp}
\newcommand{\Tcpl}{\Tp^{(\ell)}}
\newcommand{\TL}{\Tc_{L}}
\newcommand{\TcL}{\TL}
\newcommand{\TcLl}{\TL^{(\ell)}}
\newcommand{\KcG}{\Kc_{G}}
\newcommand{\KcL}{\Kc_{L}}
\newcommand{\Gb}{G_{\beta}}
\newcommand{\pce}{\textsc{pce}\xspace}
\newcommand{\dev}{\textsc{dev}\xspace}
\newcommand{\adev}{\textsc{adev}\xspace}
\DeclareMathOperator{\CaRd}{\mathsf{card}}
\newcommand{\jsCard}[1]{\CaRd\left(#1\right)}
\DeclareMathOperator{\Charsym}{\mathbf{\chi}}
\newcommand{\Charf}[1]{\Charsym_{#1}}
\newcommand{\Char}[2]{\Charf{#1}\!\left(#2\right)}
\DeclareMathOperator{\rdiffop}{\Delta}
\newcommand{\rdiff}[1]{\rdiffop(#1)}
\DeclareMathOperator{\distop}{\mathrm{d}}
\newcommand{\dist}[1]{\distop(#1)}
\DeclareMathOperator{\ecarop}{\mathrm{e}}
\newcommand{\ecar}[1]{\ecarop(#1)}
\newcommand{\cyli}[1]{[#1]}
\newcommand{\cyliw}{\cyli{w}}
\newcommand{\lex}{\preccurlyeq}
\newcommand{\rad}{\sqsubseteq}
\newcommand{\lcp}{\wedge}
\newcommand{\Pre}[1]{\mathsf{Pre}(#1)}
\newcommand{\lght}[1]{|#1|}
\newcommand{\Inte}[1]{\left\lfloor#1\right\rfloor}
\newcommand{\sequ}[1]{\big(\!#1\big)_{\ell\in\N}}
\newcommand{\prew}[1]{w_{[#1]}}
\renewcommand{\ShiftInd}[1]{\raisebox{-0.4ex}{$\scriptstyle{#1}$}}
\newcommand{\repr}[2][L]{\langle#2\rangle_{\ShiftInd{#1}}}
\newcommand{\reprp}[1]{\repr[p]{#1}}
\newcommand{\reprpq}[1]{\repr[\pq]{#1}}
\newcommand{\reprG}[1]{\repr[G]{#1}}
\DeclareMathOperator{\valop}{\pi}
\newcommand{\val}[2][L]{\valop_{#1}\!\left(#2\right)}
\renewcommand{\auta}{\jsAutUn{A,Q,I,E,T}}
\newcommand{\ImuT}{\jsAutUn{I,\mu,T}}
\DeclareMathOperator{\genop}{\mathsf{g}}
\newcommand{\gen}[2][L]{\genop_{#1}(#2)}
\newcommand{\genz}[1][z]{\gen{#1}}
\newcommand{\adj}[1][\Ac]{M_{#1}}
\DeclareMathOperator{\polop}{\mathsf{P}}
\newcommand{\pol}[1][L]{\polop_{#1}}
\DeclareMathOperator{\degop}{\mathsf{deg}}
\renewcommand{\deg}[1]{\degop{#1}}
\newcommand{\QPz}{\Q[z]}
\DeclareMathOperator{\Loneop}{L^{1}}
\newcommand{\Lone}[1]{\Loneop\!\left(#1\right)}
\DeclareMathOperator{\Succop}{\mathsf{Succ}}
\newcommand{\Succ}[2][L]{\Succop_{#1}\!\left(#2\right)}
\newcommand{\Succf}[1][L]{\Succop_{#1}}
\newcommand{\Succp}[1]{\Succ[p]{#1}}
\renewcommand{\succ}[2][L]{\Succ[#1]{#2}}   
\newcommand{\succf}[1][L]{\Succop_{#1}}
\newcommand{\succw}{\Succ{w}}               
\DeclareMathOperator{\odoop}{\tau}
\newcommand{\odo}[2][L]{\odoop_{\!\ShiftInd{#1}}\!\left(#2\right)}
\newcommand{\odof}[1][L]{\odoop_{\!\ShiftInd{#1}}}
\newcommand{\odofG}{\odof[G]}
\newcommand{\odoG}[1]{\odo[G]{#1}}
\DeclareMathOperator{\uop}{\mathbf{u}}
\newcommand{\bfu}[2][L]{\uop_{#1}(#2)}
\newcommand{\uL}[1]{\bfu{#1}}
\newcommand{\uLl}{\uL{\ell}}
\newcommand{\uLp}[1]{\bfu[\Lp]{#1}}
\newcommand{\up}[1]{\bfu[p]{#1}}
\newcommand{\wlg}[2][L]{\uop_{#1}(#2)} 
\newcommand{\wlgl}{\uLl}               
\DeclareMathOperator{\vop}{\mathbf{v}}
\newcommand{\bfv}[2][L]{\vop_{#1}(#2)}
\newcommand{\vL}[1]{\bfv{#1}}
\newcommand{\vLp}[1]{\bfv[\Lp]{#1}}
\newcommand{\bfvp}[1]{\bfv[p]{#1}}
\newcommand{\vLl}{\vL{\ell}}
\DeclareMathOperator{\bfwop}{\mathbf{w}}
\newcommand{\bfw}[2][L]{\bfwop_{#1}(#2)}
\DeclareMathOperator{\zop}{\mathbf{z}}
\newcommand{\bfz}[2][q]{\zop_{#1}(#2)}
\newcommand{\wlgsl}{\vLl}               
\DeclareMathOperator{\wop}{\mathrm{w}}
\newcommand{\wea}[1]{\wop(#1)}
\DeclareMathOperator{\cpop}{\mathsf{cp}}
\newcommand{\capr}[2][L]{\cpop_{\ShiftInd{#1}}(#2)} 
\newcommand{\cpf}[1][L]{\cpop_{\ShiftInd{#1}}} 
\newcommand{\cpL}[1]{\capr{#1}}
\newcommand{\cpG}[1]{\capr[G]{#1}}
\newcommand{\cpfG}[1][L]{\cpf[G]} 
\DeclareMathOperator{\scpop}{\mathsf{scp}}
\newcommand{\scp}[2][L]{\scpop_{#1}(#2)} 
\newcommand{\scpL}[1]{\scp{#1}}
\newcommand{\CP}[1][L]{\mathsf{CP}_{#1}} 
\newcommand{\CPL}{\CP}
\newcommand{\CPG}{\CP[G]}
\newcommand{\CAPR}[1][L]{\mathsf{CP}_{#1}} 
\DeclareMathOperator{\FCPop}{\mathsf{FCP}}
\newcommand{\FCP}[1][L]{\FCPop_{#1}}
\newcommand{\FCPL}{\FCP}
\newcommand{\LFCAPR}[1][L]{\FCPop_{#1}}        
\newcommand{\muG}{\mu_{\ShiftInd{G}}}
\newcommand{\mub}{\mu_{\ShiftInd{\beta}}}
\newcommand{\mup}{\mu_{\ShiftInd{\psi}}}
\DeclareMathOperator{\LBop}{\mathsf{LB}}
\DeclareMathOperator{\RBop}{\mathsf{RB}}
\newcommand{\LB}[2][L]{\LBop_{#1}\left(#2\right)}
\newcommand{\RB}[2][L]{\RBop_{#1}\left(#2\right)}
\DeclareMathOperator{\Maxlgop}{\mathsf{Maxlg}}
\newcommand{\Maxlg}[1][L]{\Maxlgop(#1)}
\newcommand{\rhth}[1][{r}]{{\mathbf{#1}}}
\newcommand{\rhthtp}{(r_0,r_1,\ldots,r_{\iqmu})}
\newcommand{\sigs}[1][s]{{\boldsymbol{#1}}}
\newcommand{\igrr}[1][L]{\gamma_{#1}}
\newcommand{\dmrm}{\mathrm{d}}
\newcommand{\bge}[1][\beta]{\dmrm_{#1}}
\newcommand{\qge}[1][\beta]{\bge[#1]^{*}(1)}
\newcommand{\THFracs}[2]{\frac{#1}{#2}}%
\newcommand{\pq}{\THFracs{p}{q}}
\newcommand{\td}{\THFracs{3}{2}}
\newcommand{\Indpq}[1]{#1_{\pq}}%
\newcommand{\Indtd}[1]{#1_{\td}}%
\newcommand{\Lpq}{\Indpq{L}}
\newcommand{\Tpq}{\Indpq{\Tc}}
\newcommand{\Ttd}{\Indtd{\Tc}}
\newcommand{\frct}[1]{\{#1\}}
\newcommand{\ntgr}[1]{\left\lfloor#1\right\rfloor}
\newcommand{\degh}[2][G]{\partial_{\ShiftInd{#1}}(#2)}
\newcommand{\deghN}{\degh{N}}
\newcommand{\quot}[2]{#1\div#2}
\newcommand{\rest}[2]{#1\!\mathbin{\texttt{\%}}#2} 
\newcommand{\rfog}{\mathbin{>_{\mathsf{rf}}}}
\newcommand{\meanE}[3]
   {\frac{1}{#2}\sum_{#3=0}^{#2-1}#1\left(\tau^{#3}(0)\right)}
\newcommand{\meanEi}[2]{\meanE{#1}{#2}{i}}
\newcommand{\sumTE}[3]{\sum_{#3=0}^{#2-1}\!#1(#3)\xmd}
\newcommand{\sumTEi}[2]{\sumTE{#1}{#2}{i}}
\newcommand{\sumTEj}[2]{\sumTE{#1}{#2}{j}}
\newcommand{\Digmu}[1]{#1\psscalebox{.9}{-1}}
\newcommand{\Digmui}[1]{#1\psscalebox{.9}{{\scriptstyle -1}}}
\newcommand{\pmu}{\Digmu{p}}
\newcommand{\rmu}{\Digmu{r}}
\newcommand{\iqmu}{\Digmui{q}}
\newcommand{\Digpu}[1]{#1\psscalebox{.9}{+1}}
\newcommand{\dpu}{\Digpu{d}}
\newlength{\lga}\newlength{\lgb}
\newcounter{nodelbl}
\begin{document}
\JSLandscape
\title{%
\ifelsevier
\else
\vspace*{-2cm}
\fi
The carry propagation of the successor function}

\author{{Val\'erie Berth\'e}
\thanks{IRIF, CNRS/Universit\'e de Paris}
\and
{Christiane Frougny}
\footnotemark[1]
\and
{Michel Rigo}
\thanks{Universit\'e de Li\`ege, D\'epartement de Math\'ematiques}
\and
{Jacques Sakarovitch}
\thanks{IRIF, CNRS/Universit\'e de Paris 
        and
		LTCI, Telecom, Institut Polytechnique de Paris}
\footnotemark[4]
}
\maketitle
\renewcommand{\thefootnote}{\fnsymbol{footnote}}
\footnotetext[4]{Corresponding author}
\renewcommand{\thefootnote}{\arabic{footnote}}


\begin{abstract}
Given any numeration system, we call \emph{carry propagation} at a
number~$N$ the number of digits that are changed when going from the
representation of~$N$ to the one of~$N+1$, and \emph{amortized carry
propagation} the limit of the mean of the carry propagations at the
first~$N$ integers, when~$N$ tends to infinity, if this limit exists.

In the case of the usual base~$p$ numeration system, it can be shown 
that the limit indeed exists and is equal to $p/(p-1)$.
We recover a similar value for those numeration systems we consider 
and for which the limit exists.

We address the problem of the existence of the amortized carry
propagation in non-standard numeration systems of various kinds:
abstract numeration systems, rational base numeration systems, greedy
numeration systems and beta-numeration.
We tackle the problem with three different types of techniques:
combinatorial, algebraic, and ergodic.
For each kind of numeration systems that we consider, the relevant
method allows for establishing sufficient conditions for the existence
of the carry propagation and examples show that these conditions are
close to being necessary conditions.
\end{abstract}

\ifelsevier
\begin{keyword}
Successor function \sep carry propagation  \sep numeration system \\
rational base numeration system \sep language signature \\
generating function \sep positive rational series \\
dynamical system \sep greedy numeration system
\end{keyword}
\fi

\vspace*{1cm}

\ifelsevier
\else
\renewcommand{\secti}[1]{Section~\ref{s.#1}}
\fi

\section{Introduction}

The \emph{carry propagation} is a nightmare for schoolchildren and a
headache for computer engineers: not only could the addition of two 
digits produce a carry, but this carry itself, when added to the 
next digit on the left\footnote{%
   We write numbers under MSDF (Most Significant Digit First) 
   convention.} 
could give rise to another carry, and so on, 
\NeM{cp~length, cp~time, one has to choose}%
and this may happen arbitrarily many times.
Since the beginnings of computer science, the evaluation of the carry
propagation length has been the subject of many works and it is known
that the average carry propagation length for the addition of two
uniformly distributed $n$-digit binary numbers is: 
$\msp\log_2(n)+\grando{1}\msp$ (see \cite{BurkEtAl47,Knut78,Pipp02}).

Many published works address the design of numeration systems in which
the carry does not indeed propagate --- through the use of
supplementary digits --- which allow the design of circuits where
addition is performed `in parallel' for numbers of large, but fixed,
length \cite{Aviz61, ChowRobe78}.

We consider here the problem of carry propagation from a more 
theoretical perspective and in an seemingly elementary case.
We investigate the amortized carry propagation of the 
\emph{successor function} in various numeration systems.
The central case of integer base numeration system 
is a clear example of the issue.
Let us take an integer~$p$ greater than~$1$ as a base.
In the representations of the succession of the integers --- which is
exactly what the successor function achieves --- the least digit changes
at every step, the penultimate digit changes every~$p$ steps, the
ante-penultimate digit changes every~$p^2$ steps, and so on.
Consequently, the average carry propagation of the successor
function, computed over the first~$N$ integers, should tend to the 
quantity: 
\begin{equation}
1 + \frac{1}{p} + \frac{1}{p^{2}} +\frac{1}{p^{3}} + \cdots =
\frac{p}{p-1} 
\eqvrg
\label{q.int-cp-int}
\end{equation} 
when~$N$ tends to infinity.
It can be shown that it is indeed the case.
Following on our previous works on various non-standard numeration
systems, we investigate here the questions of evaluating and computing
the amortized carry propagation in those systems.
We thus consider several such numeration systems which are different
from the classical integer base numeration systems: the greedy
numeration systems and the $\beta$-numeration systems
(see~\cite{FrouSaka10}) which are a specific case of the former, the
rational base numeration systems (introduced in~\cite{AkiyEtAl08})
which are not greedy numeration systems, and the abstract numeration
systems (defined in~\cite{LecoRigo10}) which are a generalization of
the classical \emph{positional numeration systems}.

In~\cite{BertEtAl07}, we already reported that the approach of
\emph{abstract numeration systems} of~\cite{LecoRigo01}, namely the
study of a numeration system \via the properties of the set of
expansions of the natural integers is appropriate for this problem.
Such systems consist of a totally ordered alphabet~$A$ --- hence, 
without loss of generality, an initial section 
$\msp\{0,1,\ldots,p-1\}\msp$
of the non-negative integers~$\N$ --- and a language~$L$ of~$\Ae$, 
ordered by the \emph{radix order} deduced from the ordering on~$A$.
The representation of an integer~$n$ is then the $(n+1)$-th 
word\footnote{%
   The `$+1$' gives room for the representation of~$0$ by the first 
   word of~$L$.} 
of~$L$ in the radix order.
This definition is consistent with every classical standard and
non-standard numeration system, that is, the representation of~$n$ in
such a system is the $(n+1)$-th word (in the radix order) of the set
of representations of all integers in the system.

Given a numeration system defined by a language~$L$ ordered by radix
order, we denote by~$\cpL{i}$ the carry propagation in the computation
from the representation of~$i$ in~$L$ to that of~$i+1$.
The \emph{(amortized) carry propagation} of~$L$, which we
denote by~$\CPL$, is the limit, \emph{if it exists}, of the mean
of the carry propagation at the first~$N$ words of~$L$: 
\begin{equation}
\CPL = \lim_{N \to \infty} \frac{1}{N}\sum_{i=0}^{N-1}\cpL{i}
\eqpnt
\label{q.cp-int}
\end{equation}
This quantity, introduced by Barcucci, Pinzani and Poneti
in~\cite{BarcEtAl05}, is the main object of study of the present paper
whose aim is to investigate cases where the carry propagation exists
or not, and suggests ways to compute~it.

A common further hypothesis is to consider \emph{prefix-closed} and
\emph{right-extendable} languages, called `\pce' languages in the
sequel: every left-factor of a word of~$L$ is a word of~$L$ and every
word of~$L$ is a left-factor of a longer word of~$L$.
Hence, $L$ is the branch language of an infinite labeled tree~$\TL$ 
and, once again, every classical standard and non-standard numeration 
system meets that hypothesis.

We move on to prove two simple properties of the carry propagation of
\pce languages.
First, $\CPL$ does not depend upon the labeling of~$\TL$, but only on
its `shape' which is completely defined by the infinite sequence of
the degrees of the nodes visited in a breadth-first traversal
of~$\TL$, and which we call the \emph{signature} of~$\TL$ (or
of~$L$)~\cite{MarsSaka17b}.
For instance, the signature of the language of the representations of 
the integers in base~$p$ is the constant sequence~$p^{\omega}$.
Next, let us denote by~$\uLl$ the number of words of~$L$ of 
length~$\ell$. 
We call the limit, \emph{if it exists}, of the ratio
$\msp\uL{\ell+1}/\uLl\msp$ the \emph{local growth rate} of a
language~$L$, and we denote it by~$\igrr$.
And we show (\corol{car-pro-fil}) that \emph{if~$\CPL$ exists,
then~$\igrr$ exists} and
\begin{equation}
    \CPL=\frac{\igrr}{\igrr-1}
    \label{q.agg-int}
\end{equation}
holds, which is an obvious generalization of~\equnm{int-cp-int}.
On the other hand, an example shows that~$\igrr$ may exist 
while~$\CPL$ does not (\examp{un-bal}).

By virtue of~\equnm{agg-int}, the \emph{computation} of~$\CPL$ is 
usually not an issue.
The problem lies in proving its \emph{existence}.
We develop \emph{three different methods} for the proofs of existence,
whose domains of application are pairwise incomparable, that is to say, we
have examples of numeration systems for which the existence of~$\CPL$
is established by one method and not by the other two.
These methods: \emph{combinatorial}, \emph{algebraic}, and
\emph{ergodic}, are built upon very different mathematical
backgrounds.

\ifelsevier\else\medskip\fi

We first show by a combinatorial method that
languages with an \emph{eventually periodic signature} have a carry
propagation (\theor{per-sig}).
It is known that these languages are essentially the rational base
numeration systems, possibly with non-canonical alphabets of
digits~\cite{MarsSaka17b}.

We next consider the \emph{rational abstract numeration systems}, that 
is, those systems which are defined by languages accepted by 
\emph{finite automata}.\footnote{%
   In this context where we deal with both languages and formal power 
   series, we say \emph{rational} rather than \emph{regular} for 
   languages accepted by finite automata.}
Examples of such systems are the Fibonacci numeration system and, more
generally, $\beta$-numeration systems where $\beta$ is a \emph{Parry
number} \cite{FrouSaka10}, and of course many other systems which
greatly differ from $\beta$-numeration.
\theor{cp-rat-lan} states that \emph{if a rational \pce language~$L$
has a local growth rate, and if all its quotients also have a local
growth rate, then~$L$ therefore has a carry propagation}.
The proof is based on a property of rational power series with 
positive coefficients which is reminiscent of the Perron-Frobenius 
Theorem.
A tighter sufficient condition may even be established
(\theor{a-dev-cp}) but a remarkable fact is that the existence of the
local growth rate \emph{is not} a sufficient condition for the
existence of carry propagation \emph{even for rational} \pce languages
(\examp{ctr-exp-js}).

The definition of carry propagation by \equat{cp-int} inevitably 
brings to mind the Ergodic Theorem.
Finally, we consider the so-called \emph{greedy numeration
systems} \cite{Frae85} --- $\beta$-numeration systems, with
any~$\beta>1$, are one example but they can be much more general.
The language of greedy expansions in such a system is embedded into a
compact set, and the successor function is extended as an action,
called the odometer, on that compactification.
In this setting, the odometer is just the addition of~$1$.
This gives a dynamical system, introduced
in~\cite{GrabEtAl95,BaraEtAl02}.
Tools from ergodic theory developed only recently
(in~\cite{BaraGrab16}) allow us to prove the existence of the carry
propagation for greedy systems with exponential growth
(\theor{cp-erg}), and thus for $\beta$-numeration in general.
The difficulty is that the odometer is not continuous in general
and the Ergodic Theorem does not directly apply.

\ifelsevier\else\bigskip\fi

The substential length of the paper is due to the fact that it borrows
results from different chapters of mathematics (in relation with
formal language theory) which we had to present as we wished the paper
to be as self-contained as possible.
It is organized as follows.

In \secti{pre-not}, after reviewing some definitions on words, we 
present the notion of \emph{abstract numeration systems}.
\secti{car-pro-num} is devoted to the combinatorial point of view.
Here we more precisely define the notion of carry propagation, present
its relationship with the local growth rate, and give, as mentioned
above, a first example of a language with local growth rate which does
not have a carry propagation.
We then define the signature of a language and establish the
aforementioned quoted result for languages with eventually periodic
signature.
Note that neither the algebraic nor the ergodic methods apply to 
these languages (except of course for the integer-base numeration 
systems).

In \secti{car-pro-rat}, we study the carry propagation of 
\emph{rational} abstract numeration systems by means of algebraic 
methods.
We first recall the definitions of \emph{generating function}, of
\emph{modulus} of a language, of \emph{languages with dominating
eigenvalue} (\dev languages), and give the description, due to
Berstel, of the `leading terms' of generating functions of rational
languages.
We are then able to introduce the notion of \emph{languages with almost
dominating eigenvalue}s (\adev languages) and to show that it is a
necessary and sufficient condition for a rational language to have a
local growth rate (\theor{lgr-a-dev}).
As already said, it is not a sufficient condition for the existence 
of the carry propagation.
But the counter-example directly leads to a sufficient condition for a
rational language to have a carry propagation (\theor{a-dev-cp}).

\secti{erg-poi-vie} is devoted to the study of the question of the 
carry propagation of a language by means of tools from ergodic theory.
Even though it seems to be quite a natural approach, it requires some 
elaborate new results and is, so far, applicable to the family of 
greedy numeration systems only.
We first recall Birkhoff's Ergodic Theorem and 
follow~\cite{GrabEtAl95} for the description of 
a framework 
in which we can turn a numeration system and its successor function 
into a dynamical system.
We then focus on greedy numeration systems that have been studied by 
Barat and Grabner \cite{BaraGrab16}.
The carry propagation in these systems is not the uniform limit of its
truncated 
approximations but the properties of greedy numeration systems allow
us to establish that it is regular enough to be in the scope of the
Ergodic Theorem.
We end with some examples of $\beta$-numeration systems which are at the 
crossroads of algebraic and ergodic methods, thus allowing two 
different ways for the computation of the carry propagation.

\ifelsevier\else\bigskip\fi

It should be noted that the inspiration for this current work was
initiated by a paper where the \emph{amortized algorithmic complexity}
of the successor function for some $\beta$-numeration systems was
studied \cite{BarcEtAl05}.
Whatever the chosen computation model, the (amortized) algorithmic
complexity, that is, the limit of the mean of the \emph{number of
operations} necessary to compute the successor of the first~$N$
integers, is greater than the (amortized) carry propagation, hence can
be seen as the sum of two quantities: the carry propagation itself and
an \emph{overload}.
The study of carry propagation leads to quite unexpected and winding
developments that form a subject on its own and that we present here.
But this paper is only the first step in solving the original problem 
which consists in describing the \emph{complexity} 
of the successor function.

Addressing complexity implies the definition of a computational model 
and ours is based on the use of sequential transducers.
This explains the particular attention we pay in this paper to
\emph{rational} abstract numeration systems.
The sequel of this work~\cite{CCSF-Part2} is in the preparation phase
and will hopefully be completed in a not too distant future.

\section*{Acknowledgments}

We are grateful to Christophe Reutenauer for helpful discussions on
the proof of \theor{cp-rat-lan}, and for pointing us to results from
his treatise on rational series~\cite{BersReut11}.
We would like to thank Peter Grabner who drew our attention to the
ergodic nature of the notion of carry propagation and advised us on
using the results of his recent work~\cite{BaraGrab16}.
We are also grateful to our colleagues David Madore and Hugues 
Randriam for the numerous helpful and fruitful discussions we had 
with them during the long development process of this paper.
We finally thank the referee for his precise reading and his many 
corrections in order to improve the paper quality.

We are also pleased to acknowledge support from the French Agence
Nationale de la Recherche through the ANR projects {\tt DynA3S}
(ANR-13-BS02-0003) and {\tt CODYS} (ANR-18-CE40-0007).

\ifelsevier
\else
\renewcommand{\secti}[1]{Sec.~\ref{s.#1}}
\fi



\section{Preliminary notions}
\label{s.pre-not}

We review more or less classical basic notions on languages that we 
will use throughout this work.
More specific notions and notation will be introduced at the point 
they are needed, even when classical.

\subsection{Words on ordered alphabets}

In this paper, $A$ denotes a \emph{totally ordered} finite alphabet, 
and the order is denoted by~$<$.
Without loss of generality, we can always assume that~$A$ consists of
consecutive integers starting with~$0$ and naturally ordered:
$\msp A=\{0,1,\ldots,\rmu\}\msp$.
The set of all words over~$A$ is denoted by~$\Ae$.  
The empty word is denoted by~$\varepsilon$.  
The length of a word $w$ of $\Ae$ is denoted by~$|w|$.
The set of words of length less than or equal to~$n$ is denoted 
by~$A^{\le n}$.

If $w=u\xmd v$, $u$ is a \emph{prefix} (or a \emph{left-factor})
of~$w$, \emph{strict prefix} if~$v$ is non-empty, 
and~$v$ is a \emph{suffix} (or a \emph{right-factor}) of~$w$, 
\emph{strict suffix} if~$u$ is non-empty. 
The set of prefixes of~$w$ is denoted by~$\Pre{w}$.

The \emph{lexicographic order}, denoted by~$\lex$, extends the order 
on~$A$ onto~$\Ae$ and is defined as follows.
Let $v$ and $w$ be two words in $\Ae$ and~$u$ their longest common 
prefix.
Then, $\msp v\lex w\msp$ if~$v=w$ or, if $v=u\xmd a\xmd s$, 
$w=u\xmd b\xmd t$  with~$a$ and~$b$ in~$A$, and~$a<b$.
The \emph{radix order} (also called the \emph{genealogical order} or
the \emph{short-lex order}), denoted by~$\rad$, is defined as follows:
\NeM{Is notation $\rad$ really necessary?  not used but once, I think.
JS 190504}%
$\msp v\rad w\msp$ if~$|v|<|w|$ or~$|v|=|w|$ and $v\lex w$
(\ie for two words of same length, the radix order
coincides with the lexicographic order).
In contrast with lexicographic order, radix order is a
\emph{well-order}, that is, every non-empty subset has a minimal
element.
For instance, the set $\msp a^+b=\Defi{a^nb}{n>0}\msp$ has no minimal 
element for the lexicographic order. 

\subsection{Languages and Abstract Numeration Systems}
\label{s.lan-ans}%

In all what follows, $L$ denotes a \emph{language over~$A$}, that is, 
any subset of~$\Ae$.
A language $L$ is said to be \emph{prefix-closed} if every prefix of a
word of~$L$ is in~$L$.
A language~$L$ is said to be \emph{(right) extendable} if every word 
of~$L$ is a strict prefix of another word of~$L$.

\begin{definition}
A language~$L$ is called a \emph{\pce language} if it is both
\emph{prefix-closed} and \emph{right extendable}.
\end{definition}

\begin{definition}
Every infinite language~$L$ over~$A$ is totally ordered by the radix
order on~$\Ae$.
The \emph{successor of a word}~$w$ of~$L$ is the least of all words
of~$L$ greater than~$w$, a well-defined word since radix order is a
well-order, and denoted by~$\succw$.
\end{definition}

Hence~$\succf$ is a map from~$\Ae$ into itself, whose domain is~$L$ 
and image is~$L\setminus\{w_{0}\}$, where~$w_{0}$ is the least word 
in~$L$ for the radix order. 

Languages over totally ordered alphabets have been called 
\emph{Abstract Numeration Systems} (ANS for short) and studied, for 
instance, in~\cite{LecoRigo01} or~\cite{LecoRigo10}.\footnote{%
   To tell the truth, ANS are supposed to be \emph{rational} (or 
   regular) languages in these references~\cite{LecoRigo01,LecoRigo10}.
   Although it will be met in most instances in this work, this 
   hypothesis of being rational is not necessary for the basic 
   definitions in ANS and we indeed also consider ANS which are 
   \emph{not} rational.}
Of course, such a language~$L$ can be totally ordered:
$w_{0}$ is the least word of~$L$, 
$w_{1}$ is the least word of~$L\bk\{w_{0}\}$, 
$w_{2}$ the least word of~$L\bk\{w_{0},w_{1}\}$, 
$w_{i+1}$ the least word of~$L\bk\{w_{0},w_{1},\ldots,w_{i}\}$, 
and so on:
\begin{equation}
L=\{w_{0}\rad w_{1}\rad w_{2}\rad \cdots \rad w_{i} \rad \cdots \}
\eqpnt
\notag
\end{equation}
\emph{By definition}, $w_{i}$, the $(i+1)$-th word of~$L$ in that
enumeration, is \emph{the $L$-repre\-sen\-ta\-tion} of the integer~$i$ and
is denoted by~$\repr{i}$ --- hence~$w_{0}$ is the representation 
of~$0$.
Conversely, we let~$\val{w}$ denote the integer represented by the 
word~$w$ of~$L$:
$\repr{\val{w}}=w$.
In this setting, the successor function behaves as expected, that is, 
for every non-negative integer~$i$,
\begin{equation}
	\succ{\repr{i}}=\repr{i+1}
	\eqpnt
	\notag
\end{equation}

The notion of ANS is consistent with that of \emph{positional}
numeration systems in the sense that the language of representations
of integers in such systems, seen as an ANS, gives the same
representation for every integer.

\begin{example}
\label{e.int-bas}%
\textbf{The integer base numeration systems.}
Let~$p$ be an integer, $p>1$, taken as a base.
We write~$\repr[p]{n}$ for the representation of~$n$ in base~$p$ (the 
$p$-representation of~$n$).
Let~$\msp\Ap=\{0,1,\ldots,\pmu\}\msp$ be the alphabet of digits used 
to write integers in base~$p$
and~$\msp\Lp=\{\epsilon\}\xmd\cup\xmd\{1,\ldots,\pmu\}\Ape\msp$
the set of $p$-representations of the integers.\footnote{%
   For consistency with the whole theory we present here, the 
   integer~$0$ is represented by~$\epsilon$ even though, in reality, its 
   $p$-representation is~`$0$'.}
The consistency claimed above reads:
\begin{equation}
    \fa n\in\N\quantsp
    \reprp{n}=\repr[\Lp]{n}
    \eqpnt
    \eee
    \notag
\end{equation}
\end{example}
 
Other examples such as \emph{rational base numeration systems} are
presented in \examp{rat-bas} and \emph{greedy numeration systems} in
\secti{gre-num-sys}.

\subsection{The language tree}
\label{s.lan-tre}%

A prefix-closed language~$L$ of~$\Ae$ is the \emph{branch language} 
of a labeled tree~$\TL$, that we call the \emph{language tree} 
of~$L$.

The nodes of~$\TL$ are indifferently seen as labeled by the words 
of~$L$ or by the non-negative integers:
the root of~$\TL$ is associated with~$\epsilon$ and 
with~$0=\repr{\epsilon}$;
a node labeled by~$w$ (and by~$n=\repr{w}$) has as many children as 
there are letters~$a_{1}$, $a_{2}$, \ldots, $a_{k}$ in~$A$ such 
that~$w\xmd a_{1}$, $w\xmd a_{2}$, \ldots, $w\xmd a_{k}$ are words 
in~$L$ and the edge between the node~$w$ (or~$n$) and the 
node~$w\xmd a_{i}$ (or~$m=\repr{w\xmd a_{i}}$) is labeled 
by~$a_{i}$. 
It follows that the tree~$\TL$ is naturally an \emph{ordered tree} in
the sense that the children~$w\xmd a_{1}$, $w\xmd a_{2}$, \ldots, 
$w\xmd a_{k}$  are ordered by~$a_{1}<a_{2}< \cdots<a_{k}$. 

The \emph{breadth-first traversal} of the ordered tree~$\Tc_{L}$ 
amounts to enumerating the words of~$L$ in the radix order. 
We come back to this fact in \secti{sig-car-pro}.

If~$L$ is (right) extendable, then~$\TL$ has no leaf and every branch 
of~$\TL$ is infinite.

\begin{example}[\examp{int-bas} continued]
\label{e.int-bas-1}%
The first nodes of the tree~$\Tc_{\Lp}$, that we rather write~$\Tp$,
are represented in \figur{lan-tre}\dex{a} for the case~$p=3$.
\end{example}

\begin{example}
\label{e.Fib-1}%
\textbf{The Fibonacci numeration system.} 
The Fibonacci numeration
system is a positional numeration system based on the sequence of
Fibonacci numbers, that is, the linear recurrence sequence
$(F_n)_{n\ge0}$ where~$F_0=1$, $F_1=2$ and $F_{n+2}=F_{n+1}+F_n$ for
all~$n\ge 0$.
The set of representations of the natural integers in that system is 
\NeM{Attn, j'ai changŽ la notation pour le langage, de $F$ ˆ $L_{F}$. 
JS 190507}%
known to be the set of words of~$\{0,1\}^*$ that do not contain two 
consecutive~$1$'s, that is, 
$L_{F}=\{\varepsilon\}\cup 1\{0,1\}^*\setminus \{0,1\}^*11\{0,1\}^*$
or simply $L_{F}=\{\varepsilon\}\cup 1\{0,01\}^*$.
The first nodes of the language tree~$\Tc_{F}$ are represented in  
\figur{lan-tre}\dex{b}.
For a general reference on non-standard numeration systems, see \eg
\cite{FrouSaka10}.
\end{example}

\begin{example}
\label{e.Fi-na}%
\textbf{The Fina numeration system.}
\NeM{Mauvaise idŽe (de moi je crois) de noter $E$ pour la suite.
JS 190504}%
The sequence of Fibonacci numbers of \emph{even rank} is also a
linear recurrence sequence, defined by~$E_0=1$, $E_1=3$
and $E_{n+2}=3\xmd E_{n+1}-E_n$ for all $n\ge 0$.
The positional numeration system based on the
sequence~$(E_n)_{n\ge0}$, which we call \emph{Fina}, is known to give
the integers representations that are the words of~$\{0,1,2\}^*$ which
do not contain factors in the language~$2\xmd1^{*}2$.
The first nodes of the language tree~$\Tc_{E}$ of~$E$ are represented in 
\figur{lan-tre}\dex{c}.
\end{example}

\ifelsevier
\else
\subsection{Automata}
\label{s.aut-oma}%

We essentially follow the definitions and notation of
\cite{FrouSaka10,Saka09} for automata.

An \emph{automaton over~$A$}, $\Ac=\auta$, is a directed graph with
edges labeled by elements of~$A$.
The set of vertices, traditionally called \emph{states}, is denoted
by~$Q$, $I \subset Q$ is the set of \emph{initial} states, 
$T \subset Q$ is the set of \emph{terminal} states and 
$E \subset Q \times A\times Q$ is the set of labeled \emph{edges}.
If $(p,a,q) \in E$, we write $p\stackrel{a}{\to}q$.
The automaton is \emph{finite} if $Q$ is finite.
The automaton~$\Ac$ is \emph{deterministic} if $E$ is the graph
of a (partial) function from $Q \times A$ to $Q$, and if there is a
unique initial state.
It is \emph{trim} if every state is \emph{accessible} and
\emph{co-accessible}.
\fi

\setcounter{nodelbl}{0}
\begin{figure}[htbp]
\centering%
\subfigure[$\Tc_{3}$]{%
\setlength{\lga}{1cm}%
\setlength{\lgb}{1.5cm}%
\VCDraw{%
\begin{VCPicture}{(0,0)(17\lga,4.2)}
\SmallState
\ChgStateLabelScale{.6}
\VCPut{(0,0)}{\multido{\i=0+1}{18}{\FPadd{\i}{9}{\nodelbl}%
                                   \State[\nodelbl]{(\i\lga,0)}{A\i}}}%
\VCPut{(0,\lgb)}{\multido{\i=0+1}{6}{\FPadd{\i}{\i}{\jj}%
                                     \FPadd{\jj}{\i}{\jj}%
				     \FPadd{\jj}{1}{\jj}%
                                     \FPadd{\i}{3}{\nodelbl}%
				     \State[\nodelbl]{(\jj\lga,0)}{B\i}}}%
\VCPut{(0,2\lgb)}{\State[1]{(4\lga,0)}{C0}\State[2]{(13\lga,0)}{C1}}%
\VCPut{(0,3\lgb)}{\State[0]{(8.5\lga,0)}{D0}}%
\ChgEdgeArrowStyle{-}
\ChgEdgeLabelScale{.8}
\EdgeR{D0}{C0}{1}
\EdgeL{D0}{C1}{2}
\EdgeR{C0}{B0}{0}
\EdgeR{C0}{B1}{1}
\EdgeL{C0}{B2}{2}
\EdgeR{C1}{B3}{0}
\EdgeR{C1}{B4}{1}
\EdgeL{C1}{B5}{2}
\EdgeR{B0}{A0}{0}
\EdgeR{B0}{A1}{1}
\EdgeL{B0}{A2}{2}
\EdgeR{B1}{A3}{0}
\EdgeR{B1}{A4}{1}
\EdgeL{B1}{A5}{2}
\EdgeR{B2}{A6}{0}
\EdgeR{B2}{A7}{1}
\EdgeL{B2}{A8}{2}
\EdgeR{B3}{A9}{0}
\EdgeR{B3}{A10}{1}
\EdgeL{B3}{A11}{2}
\EdgeR{B4}{A12}{0}
\EdgeR{B4}{A13}{1}
\EdgeL{B4}{A14}{2}
\EdgeR{B5}{A15}{0}
\EdgeR{B5}{A16}{1}
\EdgeL{B5}{A17}{2}
\end{VCPicture}}%
}

\subfigure[$\Tc_{F}$]{%
\setlength{\lga}{1.4cm}%
\setlength{\lgb}{1.3cm}%
\VCDraw{%
\begin{VCPicture}{(0,0)(4\lga,6.5)}
\SmallState
\ChgStateLabelScale{.6}
\VCPut{(0,0)}{\State[8]{(0,0)}{A0}%
              \State[9]{(1\lga,0)}{A1}%
              \State[10]{(2\lga,0)}{A2}%
              \State[11]{(3\lga,0)}{A3}%
              \State[12]{(4\lga,0)}{A4}}%
\VCPut{(0,\lgb)}{\State[5]{(.5\lga,0)}{B0}%
                 \State[6]{(2\lga,0)}{B1}%
                 \State[7]{(3.5\lga,0)}{B2}}%
\VCPut{(0,2\lgb)}{\State[3]{(1.25\lga,0)}{C0}
                  \State[4]{(3.5\lga,0)}{C1}}%
\VCPut{(2.375\lga,3\lgb)}{\State[2]{(0,0)}{D0}}%
\VCPut{(2.375\lga,4\lgb)}{\State[1]{(0,0)}{E0}}%
\VCPut{(2.375\lga,5\lgb)}{\State[0]{(0,0)}{F0}}%
\ChgEdgeArrowStyle{-}
\ChgEdgeLabelScale{.8}
\EdgeL{F0}{E0}{1}
\EdgeR{E0}{D0}{0}
\EdgeR{D0}{C0}{0}
\EdgeL{D0}{C1}{1}
\EdgeR{C0}{B0}{0}
\EdgeL{C0}{B1}{1}
\EdgeR{C1}{B2}{0}
\EdgeR{B0}{A0}{0}
\EdgeL{B0}{A1}{1}
\EdgeR{B1}{A2}{0}
\EdgeR{B2}{A3}{0}
\EdgeL{B2}{A4}{1}
\end{VCPicture}}
}
\eee
\subfigure[$\Tc_{E}$]{%
\setlength{\lga}{1.2cm}%
\setlength{\lgb}{2cm}%
\VCDraw{%
\begin{VCPicture}{(0,0)(12\lga,6.5)}
\SmallState
\ChgStateLabelScale{.6}
\VCPut{(0,0)}{\State[8]{(0,0)}{A0}%
              \State[9]{(\lga,0)}{A1}%
              \State[10]{(2\lga,0)}{A2}%
              \State[11]{(3\lga,0)}{A3}%
              \State[12]{(4\lga,0)}{A4}
              \State[13]{(5\lga,0)}{A5}%
              \State[14]{(6\lga,0)}{A6}%
              \State[15]{(7\lga,0)}{A7}%
              \State[16]{(8\lga,0)}{A8}
              \State[17]{(9\lga,0)}{A9}%
              \State[18]{(10\lga,0)}{A10}%
              \State[19]{(11\lga,0)}{A11}%
              \State[20]{(12\lga,0)}{A12}}%
\VCPut{(0,\lgb)}{\State[3]{(\lga,0)}{B0}%
                 \State[4]{(4\lga,0)}{B1}%
                 \State[5]{(6.5\lga,0)}{B2}
                 \State[6]{(9\lga,0)}{B3}%
                 \State[7]{(11.5\lga,0)}{B4}
		 }%
\VCPut{(0,2\lgb)}{\State[1]{(4\lga,0)}{C0}
                  \State[2]{(10.25\lga,0)}{C1}}%
\VCPut{(0,3\lgb)}{\State[0]{(7.125\lga,0)}{D0}}%
\ChgEdgeArrowStyle{-}
\ChgEdgeLabelScale{.8}
\EdgeR{D0}{C0}{1}
\EdgeL{D0}{C1}{2}
\EdgeR{C0}{B0}{0}
\EdgeR{C0}{B1}{1}
\EdgeL{C0}{B2}{2}
\EdgeR{C1}{B3}{0}
\EdgeL{C1}{B4}{1}
\EdgeR{B0}{A0}{0}
\EdgeR{B0}{A1}{1}
\EdgeL{B0}{A2}{2}
\EdgeR{B1}{A3}{0}
\EdgeR{B1}{A4}{1}
\EdgeL{B1}{A5}{2}
\EdgeR{B2}{A6}{0}
\EdgeL{B2}{A7}{1}
\EdgeR{B3}{A8}{0}
\EdgeR{B3}{A9}{1}
\EdgeL{B3}{A10}{2}
\EdgeR{B4}{A11}{0}
\EdgeL{B4}{A12}{1}
\end{VCPicture}}
}
\caption{First levels of three language trees.}
\label{f.lan-tre}%
\end{figure}

\ifelsevier
\subsection{Automata}
\label{s.aut-oma}%

We essentially follow the definitions and notation of 
\cite{FrouSaka10,Saka09} for automata.

An \emph{automaton over~$A$}, $\Ac=\auta$, is a directed graph with
edges labeled by elements of~$A$.
The set of vertices, traditionally called \emph{states}, is denoted
by~$Q$, $I \subset Q$ is the set of \emph{initial} states, 
$T \subset Q$ is the set of \emph{terminal} states and 
$E \subset Q \times A\times Q$ is the set of labeled \emph{edges}.
If $(p,a,q) \in E$, we write $p\stackrel{a}{\to}q$.
The automaton is \emph{finite} if $Q$ is finite.
The automaton~$\Ac$ is \emph{deterministic} if $E$ is the graph
of a (partial) function from $Q \times A$ to $Q$, and if there is a
unique initial state.
It is \emph{trim} if every state is \emph{accessible} and
\emph{co-accessible}.
\fi

A language~$L$ of~$\Ae$ is said to be \emph{recognizable by a finite
automaton} or \emph{rational} if there exists a finite automaton~$\Ac$
such~$L$ is equal to the set~$\CompAuto{\Ac}$ of labels of paths
starting in an initial state and ending in a terminal state.
The set of rational languages over the alphabet~$A$ is denoted
by~$\Rat\Ae$.
Note that the automata defined below implicitly read words \emph{from
left to right}.

\begin{figure}[h]
\setlength{\lga}{3cm}%
\centering
\subfigure[${\Fc}$]{%
\VCDraw{%
\begin{VCPicture}{(0,-1)(2\lga,1.5)}
\SmallState
\State{(0,0)}{A}\State{(\lga,0)}{B}\State{(2\lga,0)}{C}
\Initial[w]{A}\Final[s]{A}\Final[s]{B}\Final[s]{C}
\EdgeL{A}{B}{1}
\ArcL{B}{C}{0}
\ArcL{C}{B}{1}
\LoopN{C}{0}
\end{VCPicture}}%
}
\eee\eee
\subfigure[${\Ec}$]{%
\VCDraw{%
\begin{VCPicture}{(0,-1)(2\lga,1.5)}
\SmallState
\State{(0,0)}{A}\State{(\lga,0)}{B}\State{(2\lga,0)}{C}
\Initial[w]{A}\Final[s]{A}\Final[s]{B}\Final[s]{C}
\EdgeR{A}{B}{2}
\ArcL{B}{C}{0}
\ArcL{C}{B}{2}
\ChgLArcAngle{36}%
\ChgLArcCurvature{1.05}%
\LArcL[.15]{A}{C}{1}
\LoopN[.2]{B}{1}
\LoopN[.75]{C}{0,1}
\end{VCPicture}}%
}
\caption{Two automata for representation languages.}
\label{f.Fib-Fi-na}
\end{figure}

\subsection{A calculus classic}
\label{s.cal-cla}

The general following statement will be used several times in the 
paper. 

\begin{lemma}
\label{l.cal-cla}
Let~$\msp\big(x(n)\big)_{n\in\N}\msp$ be an increasing sequence of 
positive 
numbers\linebreak
and~$\msp\big(y(n)\big)_{n\in\N}\msp$ the sequence of the 
sums of 
initial segments: $y(n)=\sum_{i=0}^n x(i)$ for every~$n$.
Then, the following statements are equivalent:\\[.7ex]
\thi $\displaystyle{\lim_{n\to\infty} \xmd \frac{x(n+1)}{x(n)}}$
exists and is equal to~$\gamma>1$;\\[.7ex]
\thii $\displaystyle{\lim_{n\to\infty} \xmd \frac{y(n+1)}{y(n)}}$ 
exists and is equal to~$\gamma>1$;\\[.7ex]
\thiii $\displaystyle{\lim_{n\to\infty} \xmd \frac{y(n)}{x(n)}}$ 
exists and is equal to~$\frac{\gamma}{\gamma-1}$.
\end{lemma}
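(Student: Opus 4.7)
The plan is to prove the equivalence cyclically, with (i) $\Leftrightarrow$ (iii) as the technical heart of the argument and the equivalence with (ii) then following by short manipulations. The driving algebraic identity is to set $r_n := x(n)/x(n-1)$, $s_n := y(n)/y(n-1)$ and $L_n := y(n)/x(n)$, and to observe that the recursion $y(n) = y(n-1) + x(n)$ immediately yields
\[
L_n \,=\, 1 + \frac{L_{n-1}}{r_n} \qquad\text{and}\qquad s_n \,=\, 1 + \frac{r_n}{L_{n-1}}\EqPnt
\]
The fixed point of the map $L \mapsto 1 + L/\gamma$ is precisely $\gamma/(\gamma-1)$, which explains why this constant appears in (iii).

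For (i) $\Rightarrow$ (iii), I would fix $\varepsilon > 0$ small enough that $\gamma-\varepsilon > 1$ and use (i) to choose $N$ so that $r_n \in (\gamma-\varepsilon,\gamma+\varepsilon)$ for every $n > N$. The left identity above shows that $L_n$ is obtained from $L_{n-1}$ by an affine map of slope $1/r_n$, a strict contraction of ratio at most $1/(\gamma-\varepsilon) < 1$, whose fixed point $r_n/(r_n-1)$ converges to $\gamma/(\gamma-1)$. A perturbed fixed-point argument (or, equivalently, a two-sided geometric bracketing of $x(n)$ by sequences of ratios $\gamma \pm \varepsilon$ and summation) then yields $L_n \to \gamma/(\gamma-1)$. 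Conversely, (iii) $\Rightarrow$ (i) is immediate: the same identity rewrites as $r_n = L_{n-1}/(L_n - 1)$, and since $L_n \to \gamma/(\gamma-1)$ implies $L_n - 1 \to 1/(\gamma-1) > 0$, one may pass to the limit to obtain $r_n \to \gamma$.

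Once (i) $\Leftrightarrow$ (iii) is secured, the equivalence with (ii) comes cheaply. The right identity above, together with (i) and (iii), gives $s_n \to 1 + \gamma \cdot (\gamma-1)/\gamma = \gamma$, so (i) $\Rightarrow$ (ii). For (ii) $\Rightarrow$ (i), writing $x(n) = y(n) - y(n-1)$ produces
\[
\frac{x(n)}{x(n-1)} \,=\, \frac{s_{n-1}\,(s_n - 1)}{s_{n-1} - 1}\EqVrg
\]
and the hypothesis $s_n \to \gamma > 1$ lets one pass to the limit, the denominator being bounded away from zero.

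The main obstacle, to my mind, is making the perturbed-contraction step in (i) $\Rightarrow$ (iii) watertight: one cannot apply the Banach fixed-point theorem directly because the contractions change with $n$. A clean way around this is to bracket $x(n)$, for $n \geq N$, between two geometric sequences of ratios $\gamma \pm \varepsilon$, sum these bounds to control the tail of $y(n)$, and then let $\varepsilon \to 0$; this is in essence a Stolz--Ces\`aro argument tailored to geometric growth, and it is precisely where the hypothesis $\gamma > 1$ is used, since it guarantees summability of the dominant geometric tail as well as positivity of all the denominators $L_n - 1$ and $s_n - 1$ encountered above.
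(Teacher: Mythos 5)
Your proof is correct but threads the implications differently from the paper and replaces the paper's main tool with a direct calculation. The paper begins by invoking the Stolz--Ces\`aro theorem (in the Bourbaki form: if $\sum v_j$ diverges and $u_n\sim v_n$, then $\sum_{j\le n}u_j\sim\sum_{j\le n}v_j$), applies it with $u_n=x(n+1)$, $v_n=\gamma\,x(n)$ to obtain (i)$\Rightarrow$(ii), and then derives (iii) and the converse directions by short manipulations of the identity $y(n)=y(n-1)+x(n)$. You instead make (i)$\Leftrightarrow$(iii) the pivot, reformulating everything through $L_n = 1 + L_{n-1}/r_n$, and for (i)$\Rightarrow$(iii) you propose a two-sided geometric bracketing: for $n$ large, $\sum_{k=N}^n x(k)/x(n)$ is squeezed between $\sum_{j=0}^{n-N}(\gamma+\varepsilon)^{-j}$ and $\sum_{j=0}^{n-N}(\gamma-\varepsilon)^{-j}$, the finite prefix $y(N-1)/x(n)$ vanishes because $x(n)\to\infty$, and one lets $\varepsilon\to 0$. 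This is sound, and it amounts to reproving the Stolz--Ces\`aro step by hand in the special case of geometric growth. Your closing steps --- $s_n = 1 + r_n/L_{n-1}$ for (i)$\Rightarrow$(ii), and $r_n = s_{n-1}(s_n-1)/(s_{n-1}-1)$ for (ii)$\Rightarrow$(i) --- are the same elementary flavor as the paper's converse arguments, just expressed through a different pair of identities. The paper's citation buys brevity; your route buys self-containedness and a transparent explanation, via the affine recursion for $L_n$, of why $\gamma/(\gamma-1)$ is the right constant. You are also right to flag that the ``perturbed fixed-point'' phrasing is not rigorous as stated (the contractions vary with $n$) and that the bracketing is the argument that must actually be carried out; with that made explicit, the proof is complete.
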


\begin{proof}
Let us recall a classical result \cite[Chap.~V.4, Prop.~2]{Bourbaki}. 
Let~$(u_n)_{n\in\N}$ and~$(v_n)_{n\in\N}$ be two sequences of 
non-negative numbers. 
If the series $\sum_{j=0}^{+\infty} v_j$ is divergent, 
then $u_n\sim v_n$ implies that 
$\sum_{j=0}^{n} u_j\sim \sum_{j=0}^{n} v_j$. 
This result is sometimes referred to as Stolz--Ces\`aro Theorem.

\medskip
\noindent
(i) implies (ii): by the ratio test, the series $\sum_{j=0}^{+\infty} 
x(j)$ is divergent. 
Apply the above result with $\msp(u_n)_{n\in\N}=(x(n+1))_{n\in\N}\msp$ 
and $\msp(v_n)_{n\in\N}=(\gamma x(n))_{n\in\N}\msp$.
We obtain that
\begin{equation}
\sum_{j=0}^n u_j=\sum_{j=1}^{n+1}x(j)=y(n+1)-x(0)\sim
\sum_{j=0}^n v_j=\gamma \sum_{j=0}^nx(j)=\gamma y(n)
\notag
\end{equation}
and the conclusion follows.

\medskip
\noindent
(ii) implies (i): from (ii) we have:
\begin{equation}
\lim_{n\to\infty} \xmd \frac{y(n)+x(n+1)}{y(n)}=\gamma
\notag
\end{equation} 
and thus $\displaystyle{\frac{x(n+1)}{y(n)}}\to\gamma-1$.
Since
\begin{equation}
\frac{x(n+1)}{x(n)}=\frac{x(n+1)}{y(n)} \xmd
                    \frac{y(n)}{y(n-1)} \xmd\frac{y(n-1)}{x(n)}
\eqvrg
\notag
\end{equation}
the result follows.

\medskip
\noindent
(ii) implies (iii): since $y(n)=y(n-1)+x(n)$ dividing both sides by 
$y(n-1)$ and letting $n$ tends to infinity, leads to 
\begin{equation}
\gamma=1+\lim_{n\to\infty} \frac{x(n)}{y(n)}\frac{y(n)}{y(n-1)}
\eqpnt
\notag
\end{equation} 
We conclude that
$\displaystyle{\frac{x(n)}{y(n)}}\to\frac{\gamma-1}{\gamma}$.
 
\medskip
\noindent
(iii) implies (ii): again since $y(n)=y(n-1)+x(n)$, observe that 
\begin{equation}
\lim_{n\to\infty} \xmd \frac{y(n)}{x(n)}=\frac{\gamma}{\gamma-1}
\ee\text{if and only if}\ee
\lim_{n\to\infty} \xmd \frac{y(n-1)}{x(n)}=\frac{1}{\gamma-1}
\eqpnt
\notag
\end{equation} 
Since 
$\displaystyle{\frac{y(n)}{y(n-1)}=
               \frac{y(n)}{x(n)}\xmd\frac{x(n)}{y(n-1)}}$, 
the result follows.
\end{proof}



\section{The carry propagation of a language: 
         \protect\\ \eee \eee \e 
	 a combinatorial point of view}
\label{s.car-pro-num}%

We first define the carry propagation of a language~$L$ and show that it 
does not always exist. 
Sufficient conditions for its existence, and for its computation are
then investigated in terms, first, of growth rates of the language
and, then, of the notion of signature associated with the language
tree of~$L$, by stressing the fact that what counts for carry
propagation is the shape of the tree and not its labeling.

\subsection{First definitions for the carry propagation}

We write~$u\lcp v$ for the \emph{longest common left factor} of two 
words~$u$ and~$v$ of~$\Ae$.
If two words~$u$ and~$v$ of~$\Ae$ have the \emph{same length}, we 
write~$\rdiff{u,v}$ for the (common) length of the left quotient of~$u$ 
(or~$v$) by~$u\lcp v$:
\begin{equation}
	\rdiff{u,v} = |u| - |u\lcp v| = |v| - |u\lcp v|     
	\eqpnt
	\notag
\end{equation}
If~$u$ and~$v$ do \emph{not} have the same length, we set
\begin{equation}
	\rdiff{u,v} = \max\{|u|,|v|\}
	\eqvrg
	\notag
\end{equation}
which is the same as~$\rdiff{u',v'}$
where~$u'$ and~$v'$ are obtained from~$u$ and~$v$ by padding the 
shorter word \emph{on the left} with a symbol which is not in~$A$ and so 
that~$|u'|=|v'|$.

\begin{definition}
\label{d.car-pro}%
Let~$A$ be an ordered alphabet and~$L$ a language of~$\Ae$, ordered 
by radix order.
The \emph{carry propagation} at a word~$w$ of~$L$, and with respect 
to~$L$, is the quantity:
\begin{equation}
\capr{w} = \rdiff{w,\succ{w}}
\eqpnt
\notag
\end{equation}
We naturally consider a language over an ordered alphabet as 
an abstract numeration system and we also write, for every integer~$i$,
\begin{equation}
\capr{i} = \capr{\repr{i}} = \rdiff{\repr{i},\repr{i+1}}
\eqpnt
\notag
\end{equation}
\end{definition}

\begin{example}[\examp{Fib-1} continued]
\label{e.Fib-2}%
In the Fibonacci numeration system, 
$\repr[F]{9}=1\xmd0\xmd0\xmd0\xmd1$ and
$\repr[F]{1\xmd0}=1\xmd0\xmd0\xmd1\xmd0$,
hence $\msp\capr[F]{9} = 2\msp$. 
We also have $\repr[F]{12}=1\xmd01\xmd01$ 
and  $\repr[F]{13}=1\xmd0\xmd0\xmd0\xmd0\xmd0$. 
Thus $\msp\capr[F]{12} = 6\msp$.
\end{example}

From the definition of the carry propagation at a word, we derive the
carry propagation \emph{of a language}.
We first denote by~$\scpL{N}$ \emph{the sum of the carry propagations
at the first~$N$ words of the language~$L$}:
\begin{equation}
\scpL{N}= \sum_{i=0}^{N-1}\cpL{i}
\eqpnt
\label{q.scp}
\end{equation}

\begin{definition}[\cite{BarcEtAl05}]
\label{d.car-pro-lan}%
The \emph{carry propagation} of a language~$L\subseteq\Ae$, which we
denote by~$\CPL$, is the \emph{amortized} carry propagation at the words
of the language, that is, the limit, \emph{if it exists}, of the mean
of the carry propagation at the first~$N$ words of the language:
\begin{equation}
\CPL = \lim_{N \to \infty} \frac{1}{N}\scpL{N}
\eqpnt
\notag
\end{equation}
\end{definition}

\subsection{The language tree and the carry propagation}
\label{s.car-pro-tri}%

We denote by~$\wlgl$ (resp.~$\wlgsl$) the number of words 
of~$L$ of length~$\ell$ (resp. of length less than, or equal 
to,~$\ell$):
\begin{equation}
    \uLl =\jsCard{L\cap A^{\ell}}
    \e\text{and}\e
    \vLl =\jsCard{L\cap A^{\leq\ell}}=\sum_{i=0}^{\ell}\uL{i}
    \eqpnt
    \notag
\end{equation}
The set of words of $L$ of each length that are maximal in the radix 
(or lexicographic) order is denoted by~$\Maxlg$.
We have:
\begin{gather}
    \Maxlg =\Defi{\repr{\vLl-1}}{\ell\in\N}
    \ee\text{and}
    \notag\\
    L\cap A^{\ell} =\Defi{u\in L}{\vL{\ell-1}\leq\val{u}<\vLl}
    \eqpnt
    \notag
\end{gather}

The carry propagation~$\CPL$ is more easily evaluated when the
terms~$\capr{i}$ of the sum~$\scp{N}$ are first \emph{aggregated} in
\emph{partial sums} corresponding to \emph{words of fixed length}.
In particular, we can state:

\begin{proposition} 
\label{p.car-pro-agg}%
If~$L$ is a \pce language, then, for every integer~$\ell$,
\begin{equation}
    \sum_{\substack{w\in L\\ |w|=\ell}}\cpL{w}
	      = \sum_{i=\vL{\ell-1}}^{\vLl-1}\cpL{i}
	      = \vLl
	\eqpnt
\label{q.agg-1}
\end{equation}
\end{proposition}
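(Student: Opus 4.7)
The plan is to prove the two equalities in \equat{agg-1} separately, the first being essentially a relabeling and the second the combinatorial heart of the statement.

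\textbf{First equality.} This is almost by definition. By construction of the enumeration $L=\{w_0\rad w_1\rad\cdots\}$, the word $\repr{i}$ has length exactly $\ell$ if and only if $\vL{\ell-1}\leq i<\vLl$ (with the convention $\vL{-1}=0$). Hence, as $i$ ranges over the integer interval $[\vL{\ell-1},\vLl-1]$, the word $\repr{i}$ ranges over all words of $L$ of length $\ell$, each exactly once. The first equality follows from the definition $\cpL{i}=\cpL{\repr{i}}$.

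\textbf{Setup for the second equality.} Enumerate the words of length $\ell$ in radix order as $u_0\rad u_1\rad\cdots\rad u_{k-1}$, where $k=\uLl$. For $0\leq i\leq k-2$, both $u_i$ and $\succ{u_i}=u_{i+1}$ have length $\ell$, so $\cpL{u_i}=\ell-|u_i\lcp u_{i+1}|$. For $i=k-1$, the successor $\succ{u_{k-1}}$ is the least word of length strictly greater than $\ell$. Since $L$ is \pce, every node of $\TL$ at depth $\ell$ has a child (its unique extension of length $\ell+1$ lies in $L$ by right-extendability and prefix-closedness), so $\uL{\ell+1}\geq 1$ and $\succ{u_{k-1}}$ has length exactly $\ell+1$. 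By the convention for words of different length, $\cpL{u_{k-1}}=\ell+1$.

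\textbf{Combinatorial core.} Writing $\ell-|u_i\lcp u_{i+1}|=\#\{j:1\leq j\leq\ell\text{ and }|u_i\lcp u_{i+1}|<j\}$ and interchanging sums, I would obtain
\begin{equation}
\sum_{i=0}^{k-2}\bigl(\ell-|u_i\lcp u_{i+1}|\bigr)=\sum_{j=1}^{\ell}\#\bigl\{i:0\leq i\leq k-2,\,|u_i\lcp u_{i+1}|<j\bigr\}.
\notag
\end{equation}
For each fixed $j\in\{1,\ldots,\ell\}$, the condition $|u_i\lcp u_{i+1}|<j$ says that the length-$j$ prefixes of $u_i$ and $u_{i+1}$ differ. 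Since consecutive words in radix order with the same length-$j$ prefix are grouped together, the sequence $u_0,\ldots,u_{k-1}$ partitions into consecutive blocks indexed by the distinct length-$j$ prefixes appearing among them. Because $\TL$ has no leaves, every node at depth $j$ has at least one descendant at depth $\ell$; so each of the $\uL{j}$ words of length $j$ in $L$ occurs as such a prefix, giving exactly $\uL{j}$ blocks and therefore $\uL{j}-1$ boundaries.

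\textbf{Conclusion.} Summing these contributions and adding $\cpL{u_{k-1}}$,
\begin{equation}
\sum_{\substack{w\in L\\|w|=\ell}}\cpL{w}=\sum_{j=1}^{\ell}(\uL{j}-1)+(\ell+1)=\Bigl(\sum_{j=1}^{\ell}\uL{j}\Bigr)-\ell+\ell+1=\uL{0}+\sum_{j=1}^{\ell}\uL{j}=\vLl,
\notag
\end{equation}
using $\uL{0}=1$ since $\epsilon\in L$ (as $L$ is non-empty and prefix-closed). The main delicate point is justifying that the number of blocks at level $j$ is exactly $\uL{j}$; this is where the right-extendability hypothesis is crucial, and one should also handle gracefully the degenerate case $k=1$, in which the first sum is empty and the whole contribution comes from $\cpL{u_0}=\ell+1$.
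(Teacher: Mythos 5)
Your proof is correct, but it takes a genuinely different route from the paper. The paper does not prove \propo{car-pro-agg} directly: it obtains it as the special case $N=\vLl-1$ of the more refined \theor{lb-car-pro}, which is established by induction on~$N$ and rests on the observation that each application of the successor adds exactly $\cpL{u}$ new nodes to the left bank $\LB{\cdot}$, so that the partial sums $\sum_{i=\vL{\ell-1}}^{N}\cpL{i}$ are identified with cardinalities $\jsCard{\LB{\Succ{u}}}$. You instead perform a direct, non-inductive double counting: writing $\cpL{u_i}$ as the number of levels $j$ at which the length-$j$ prefix changes between $u_i$ and $u_{i+1}$, and observing that over a full level this change occurs exactly $\uL{j}-1$ times at each $j$ (since, by prefix-closure and right-extendability, the length-$j$ prefixes of the length-$\ell$ words are precisely the $\uL{j}$ words of $L\cap A^{j}$ and they index contiguous blocks in lexicographic order). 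The two arguments are dual countings of the same incidence, but yours is shorter and self-contained for the proposition itself, whereas the paper's inductive left-bank argument buys the intermediate partial sums of \theor{lb-car-pro} and \corol{sum-cp}, which are indispensable later (in the proofs of \propo{int-bas}, \theor{per-sig} and \theor{a-dev-cp}); your level-by-level count does not yield those refinements without further work. Your handling of the two boundary cases (the successor of the maximal word of length $\ell$, and the degenerate case $\uLl=1$) is correct, and you rightly identify right-extendability as the hypothesis guaranteeing that every word of length $j\leq\ell$ occurs as a prefix at level $\ell$.
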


This proposition is indeed an instance of the more precise
\theor{lb-car-pro} that will be used in subsequent proofs and that
requires a definition.

Let~$\TcL$ be the language tree of~$L$ and~$w$ a word of~$L$ of 
length~$\ell$.
Let us denote by~$\TcLl$ the part of~$\TcL$ which consists of 
words of~$L$ of length less than, or equal to,~$\ell$.
And let us see~$\Pre{w}$, the set of prefixes of~$w$ that form the
unique path from~$w$ to~$\epsilon$, as a river that flows from~$w$
to~$\epsilon$; it determines two subsets of~$\TcLl$: the `left bank'
of~$w$,~$\LB{w}$, and the `right bank' of~$w$,~$\RB{w}$, which
consists respectively of the nodes on the left and on the right
of~$\Pre{w}$ as depicted in \figur{bank}.
Together, $\LB{w}$, $\Pre{w}$ and~$\RB{w}$ form a partition 
of~$\TcLl$ and we have:
\begin{equation}
    \LB{w} = \Defi{u\in L}{|u|\jsleq|w| \e\text{and}\e u\lex w} \bk\Pre{w}
    \eqpnt
    \notag
\end{equation}

\begin{figure}[h!tb]
\centering
\scalebox{1}{
\psfrag{LB}{$\LB{w}$}
\psfrag{RB}{$\RB{w}$}
\psfrag{e}{$\varepsilon$}
\psfrag{w}{$w$}
\psfrag{T}{$\TcLl$}
\includegraphics{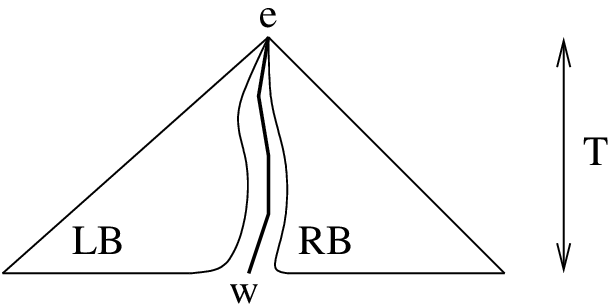}}
\caption{The tree~$\TcLl$ and the three sets $\LB{w}$, $\Pre{w}$ 
and~$\RB{w}$.} 
\label{f.bank}
\end{figure}

\begin{example}[\examp{Fib-1} continued]
\label{e.Fib-3}
In~$\Tc_{F}$, we have:
\begin{align}
\LB[F]{10010} &= \{1000,10000,10001\}
\e\text{and}\e
\notag\\
\LB[F]{10100} &=\LB[F]{10010} \cup \{100,1001,10010\}
\eqpnt
\eee\eee
\notag
\end{align}
\end{example}

The statement we are aiming at gives the sum of the carry propagation 
at all words of the same length~$\ell$ as a word~$u$ and less than or 
equal to~$u$ in the lexicographic order.
We recall that $\Maxlg =\Defi{\repr{\vLl-1}}{\ell\in\N}$.

\begin{theorem} 
\label{t.lb-car-pro}%
Let~$L$ be a \pce language, $u$ in~$L$ of length~$\ell$ 
and~$\msp N=\val{u}\msp$. Then, we have:
\begin{equation}
\sum_{i=\vL{\ell-1}}^{N}\cpL{i} = 
\begin{cases}
        \; \jsCard{\LB{\succ{u}}} 
	          & \e\text{if}\e u\not\in\Maxlg\eqvrg\\
        \; \vLl & \e\text{if}\e u\in\Maxlg \eqpnt 
\end{cases}
\label{q.agg-2}
\end{equation}
\end{theorem}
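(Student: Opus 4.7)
The plan is to proceed by induction on the lex-rank of~$u$ within the length-$\ell$ words of~$L$, after isolating the following geometric lemma as the heart of the argument: \emph{if $u\in L$ has length~$\ell$ and $u\notin\Maxlg$, then}
\[\jsCard{\LB{\succ{u}}}-\jsCard{\LB{u}}=\cpL{u}\EqPnt\]
To establish this lemma I would set $k=|u\lcp\succ{u}|$ and, for each~$j$, let $p_j$ and $q_j$ denote the length-$j$ prefixes of~$u$ and~$\succ{u}$ respectively. For $j\le k$ one has $p_j=q_j$, so the level-$j$ slices of $\LB{u}$ and $\LB{\succ{u}}$ coincide. For $k<j\le\ell$ the right-extendability of~$L$ forces $q_j$ to be the \emph{immediate} lex-successor of $p_j$ among length-$j$ words of~$L$: any strictly intermediate length-$j$ word of~$L$ would extend to a length-$\ell$ word of~$L$ strictly between~$u$ and~$\succ{u}$ in radix order, contradicting the very definition of the successor. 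Hence $\LB{\succ{u}}$ gains exactly one new node at each such level (namely~$p_j$ itself), and summing over the $\ell-k$ affected levels yields the increment $\ell-k=\cpL{u}$.

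Let $\Sigma(u)$ denote the left-hand side of~\equnm{agg-2}. The base case of the induction is that~$u$ is the lex-minimal length-$\ell$ word of~$L$: right-extendability together with minimality gives $\LB{u}=\emptyset$, since any strictly smaller length-$j$ word would extend to a length-$\ell$ word strictly below~$u$, so $\Sigma(u)=\cpL{u}$. If $u\notin\Maxlg$, the geometric lemma applied with the value $\jsCard{\LB{u}}=0$ yields $\Sigma(u)=\jsCard{\LB{\succ{u}}}$; if instead~$u$ is simultaneously first and last at length~$\ell$ (so $\uLl=1$), a symmetric right-extendability argument gives $\RB{u}=\emptyset$, and hence $\vLl=\jsCard{\Pre{u}}=\ell+1=\cpL{u}=\Sigma(u)$.

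For the inductive step, let~$u'$ be the radix-predecessor of~$u$ in~$L$; since~$u$ is not the first length-$\ell$ word, $u'$ still has length~$\ell$, lies outside $\Maxlg$, and satisfies $\succ{u'}=u$. The induction hypothesis applied to~$u'$ gives $\Sigma(u')=\jsCard{\LB{u}}$, whence $\Sigma(u)=\jsCard{\LB{u}}+\cpL{u}$. If $u\notin\Maxlg$ the geometric lemma delivers $\Sigma(u)=\jsCard{\LB{\succ{u}}}$, as required. If $u\in\Maxlg$, right-extendability again forces $\RB{u}=\emptyset$ (any length-$j$ word strictly greater than the length-$j$ prefix of~$u$ would extend to a length-$\ell$ word larger than~$u$), so $\jsCard{\LB{u}}=\vLl-\jsCard{\Pre{u}}=\vLl-(\ell+1)$; combined with $\cpL{u}=\ell+1$ (since $\succ{u}$ has length $\ell+1$), this gives $\Sigma(u)=\vLl$.

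The main obstacle is the geometric lemma---once it is in hand, the induction is pure bookkeeping, and the two right-extendability observations $\LB{u}=\emptyset$ in the base case and $\RB{u}=\emptyset$ for $u\in\Maxlg$ are exactly what stitches the two branches of the case split into a single uniform inductive mechanism.
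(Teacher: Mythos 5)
Your proof is correct and follows essentially the same strategy as the paper's: an induction on the rank of~$u$ among the length-$\ell$ words of~$L$, split into the same three cases (first, interior, last word of that length). Your ``geometric lemma'' is precisely the heart of the paper's case~(ii), where it is asserted that $|y|=\jsCard{\LB{v}\setminus\LB{u}}$; you simply spell out more explicitly how right-extendability forces the level-$j$ prefix of~$\succ{u}$ to be the immediate lexicographic successor of the level-$j$ prefix of~$u$ for each affected level, a step the paper leaves implicit behind the phrase ``since $L$ is prefix-closed.''
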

\begin{figure}[b!htp]
    \centering
    \scalebox{1}{
\psfrag{LB}{$\LB{u}$}
\psfrag{e}{$\varepsilon$}
\psfrag{w}{$w$}\psfrag{u}{$u$}\psfrag{v}{$v$}
\psfrag{x}{$x$}\psfrag{y}{$y$}\psfrag{z}{$z$}
\includegraphics{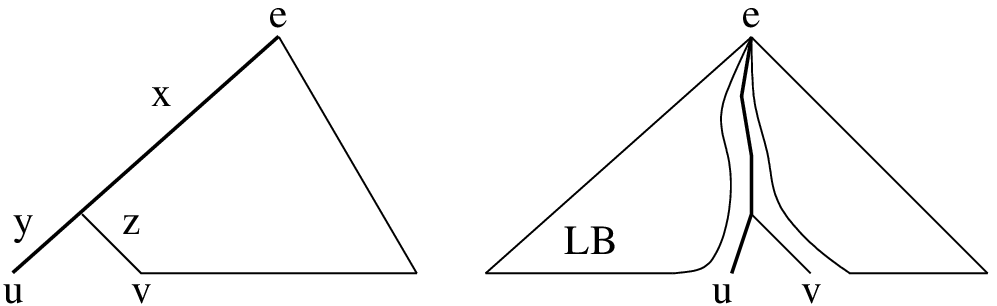}}
\caption{Illustrations of the first two cases of the proof of 
\theor{lb-car-pro}.}
\label{f.bank2}
\end{figure}

\ifelsevier
\begin{proof}
\else
\begin{proofss}
\fi
The proof works by  induction on~$N$.
Let~$u=\repr{N}$ and~$\ell=|u|$.
Hence $\vL{\ell-1}\leq N\leq\vLl-1$. 

\thi~We first assume $N=\vL{\ell-1}$.
The word~$u$ is the smallest word of~$L$ of length~$\ell$ (in the
lexicographic order).
Let~$v=\Succ{u}$.
We first suppose that~$u$ is not in~$\Maxlg$.
Then (as depicted on the left part of \figur{bank2}):
\begin{equation}
    u = x\xmd y\EqVrgInt
    v = x\xmd z
    \e\text{and}\e
    \cpL{N} = \rdiff{u,v} = |y| =|z|
    \eqpnt
    \notag
\end{equation}
But, since~$L$ is prefix-closed, $|y|$~is exactly the number of nodes 
in~$\LB{v}$.

Now, if~$u=\repr{\vL{\ell-1}}$ is 
in~$\Maxlg$, then there is only one word in~$L$ for every length~$k$,
$0\leq k\leq\ell$ and we have at the same time~$\cpL{u}=\ell+1$ and
$\vLl=\ell+1$, hence \equnm{agg-2} still holds in this case.

\thii
We now assume  $\vL{\ell-1}< N<\vLl-1$.
Hence~$v=\Succ{u}$ is of length~$\ell$ and as above there exist~$x$, 
$y$ and~$z$ such that~$u=x\xmd y$ and~$v = x\xmd z$.
The same reasoning as above applies (see the right part 
of~\figur{bank2}): 
\begin{multline}
    \sum_{i=\vL{\ell-1}}^{N}\cpL{i} =
    \sum_{i=\vL{\ell-1}}^{N-1}\cpL{i} + \cpL{N} =
    \\
    \jsCard{\LB{u}} + \cpL{u} = \jsCard{\LB{u}} + |y|
    \EqPnt
    \notag
\end{multline}
But, since~$L$ is prefix-closed, $|y|$~is  the number of nodes 
in~$\LB{v}\bk\LB{u}$.

\thiii~ Finally, assume $N=\vLl-1$.
Then~$u=\repr{N}$ belongs to~$\Maxlg$.
In this case,
\begin{gather}
    \cpL{u}=\ell+1\EqVrgInt
    \LB{u}\cup\Pre{u}=\TcLl
    \e\text{and}
    \notag\\
    \sum_{i=\vL{\ell-1}}^{N}\cpL{i} =  \jsCard{\LB{u}} + (\ell+1) =
    \jsCard{\TcLl} = \vLl
    \eqpnt
    \notag
\ifelsevier\else\tag*{\EoP}\fi
\end{gather}
\ifelsevier
\end{proof}
\else
\end{proofss}
\fi

\propo{car-pro-agg} is the instance of \theor{lb-car-pro} 
when~$\msp N=\vLl-1\msp$.
By grouping the sum of carry propagations by words of the same length,
\theor{lb-car-pro} also yields an evaluation of the sum \equnm{scp}
of the carry propagations at the first~$N$ words of a \pce language:

\begin{corollary} 
\label{c.sum-cp}%
Let~$L$ be a \pce language, $u$ in~$L$ of length~$\ell$ 
and~$\msp N=\val{u}\msp$.
We then have:
\begin{equation}
\scp{N} = 
\sum_{i=0}^{\ell-1} \vL{i} + \jsCard{\LB{\Succ{u}}}
\eqpnt
\notag
\end{equation}
\end{corollary}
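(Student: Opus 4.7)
The plan is to partition the index set appearing in $\scp{N}$ according to the length of the representations, and then apply \theor{lb-car-pro} to each block. Since the words of~$L$ of any given length~$k$ occupy the consecutive positions from~$\vL{k-1}$ to~$\vL{k}-1$ in the radix enumeration (with the convention $\vL{-1}=0$), these indices split naturally into complete blocks of lengths $0,1,\ldots,\ell-1$, followed by a single partial block consisting of the words of length~$\ell$ up to and including $u=\repr{N}$.

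For each length $k<\ell$, the corresponding block is exactly the range of indices in which \propo{car-pro-agg} applies, so its contribution to $\scp{N}$ is~$\vL{k}$. Summing over $k=0,1,\ldots,\ell-1$ produces the first term $\sum_{i=0}^{\ell-1}\vL{i}$ of the claim. For the final partial block, I would apply \theor{lb-car-pro} directly to~$u$: in the generic case $u\notin\Maxlg$ the theorem evaluates the last partial sum to $\jsCard{\LB{\Succ{u}}}$, which yields the second term of the formula. The exceptional case $u\in\Maxlg$ is covered by the second branch of the same theorem, which evaluates the partial block to~$\vLl$; one then checks, using that $\Succ{u}$ has length $\ell+1$ and sits at the beginning of the next level of~$\TcL$, that the uniform expression still matches.

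The proof is essentially a bookkeeping argument stacked on top of \theor{lb-car-pro}; no step is technically deep. The only point requiring a bit of care is to line up precisely the half-open summation range used in \theor{lb-car-pro} with the range defining~$\scp{N}$ (in particular, whether the carry propagation at $u$ itself is included), and to verify that the boundary case $u\in\Maxlg$ does fit the uniform formula.
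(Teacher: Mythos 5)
Your own bookkeeping caveat is precisely where the argument breaks. The partial block you need in $\scp{N}=\sum_{i=0}^{N-1}\cpL{i}$ is $\sum_{i=\vL{\ell-1}}^{N-1}\cpL{i}$, but \theor{lb-car-pro} applied to $u=\repr{N}$ evaluates $\sum_{i=\vL{\ell-1}}^{N}\cpL{i}$, one term too many --- the extra term being $\cpL{N}$. To evaluate the actual partial block you should instead apply \theor{lb-car-pro} to $\repr{N-1}$, whose successor is $u$; this yields $\jsCard{\LB{u}}$, not $\jsCard{\LB{\Succ{u}}}$. The partition argument therefore delivers $\scp{N}=\sum_{i=0}^{\ell-1}\vL{i}+\jsCard{\LB{u}}$, whereas the quantity your proof assembles, $\sum_{i=0}^{\ell-1}\vL{i}+\jsCard{\LB{\Succ{u}}}$, exceeds $\scp{N}$ by $\cpL{N}$ and is in fact $\scp{N+1}$ when $u\notin\Maxlg$.

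Your handling of the boundary case is also wrong in substance. If $u\in\Maxlg$, then $\Succ{u}$ is the \emph{smallest} word of length $\ell+1$, so $\Pre{\Succ{u}}$ is the leftmost branch of $\TL^{(\ell+1)}$ and $\LB{\Succ{u}}=\emptyset$; hence $\jsCard{\LB{\Succ{u}}}=0$, which certainly does not equal $\vLl$, and the uniform formula does \emph{not} match the second branch of the theorem. A base-$2$ check exhibits both slips: with $u=10$, $N=2$, $\ell=2$ one finds $\scp{2}=\cpL{0}+\cpL{1}=3$ while $\sum_{i=0}^{1}\vL{i}+\jsCard{\LB{11}}=4$; with $u=11\in\Maxlg$, $N=3$ one finds $\scp{3}=4$ while $\sum_{i=0}^{1}\vL{i}+\jsCard{\LB{100}}=3$. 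Once you apply \theor{lb-car-pro} at index $N-1$ these problems vanish: when $N>\vL{\ell-1}$ the word $\repr{N-1}$ has length $\ell$ and is never maximal of that length, so only the first branch of the theorem is used, and when $N=\vL{\ell-1}$ the partial block is empty with $\LB{u}=\emptyset$, so no case analysis on $\Maxlg$ is needed at all.
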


\begin{example}[\examp{int-bas} continued]
\label{e.int-bas-2}%
Let~$p$ be an integer, $p>1$, and~$\Lp$ the set of $p$-representations
of the integers.
In order to lighten the notation, we write~$\up{\ell}$ instead
of~$\uLp{\ell}$, $\bfvp{\ell}$ instead of~$\vLp{\ell}$, $\Succp{u}$
instead of~$\Succ[\Lp]{u}$, $\CP[p]$ instead of~$\CP[\Lp]$, \etc As a
first application of \theor{lb-car-pro}, Proposition~\ref{p.int-bas}
below allows one to recover the value $\displaystyle{\frac{p}{p-1}}$
for the carry propagation.
\end{example}

\begin{proposition} 
\label{p.int-bas}%
Let~$p$ be an integer, $p>1$, 
and~$\Lp$
the set of $p$-representations of the integers. 
The carry propagation $\CP[p]$  exists and is equal to 
$\displaystyle{\frac{p}{p-1}}$. 
\end{proposition}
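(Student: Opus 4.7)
Plan.

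Since in $\Tp$ every internal node has $p$ children (the root having $p-1$), one immediately obtains $\up{\ell}=(p-1)p^{\ell-1}$ for $\ell\ge 1$, $\up{0}=1$, and $\bfvp{\ell}=p^\ell$. Combined with \propo{car-pro-agg} this yields $\scp{\bfvp{\ell}}=\sum_{k=0}^{\ell}\bfvp{k}=(p^{\ell+1}-1)/(p-1)$, and \lemme{cal-cla} already delivers the subsequential limit $\scp{\bfvp{\ell}}/\bfvp{\ell}\to p/(p-1)$. Because the ratio $\bfvp{\ell+1}/\bfvp{\ell}=p$ stays bounded away from~$1$, no sandwich between consecutive $\bfvp{\ell}$'s will be tight enough, so the plan is to estimate $\scp{N}$ at every~$N$.

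The key ingredient I would derive is a closed form for $\jsCard{\LB{w}}$: for $w=c_1 c_2\cdots c_\ell\in\Lp$, at each depth $j\in\{1,\ldots,\ell\}$ the node $w_{[j]}$ has $c_1-1$ left-siblings when $j=1$ and $c_j$ when $j\ge 2$, and each such sibling roots a complete subtree of $\Tpl$ of size $(p^{\ell-j+1}-1)/(p-1)$. Summing these contributions and using $\val[p]{w}=\sum_{j=1}^\ell c_j p^{\ell-j}$ gives, after telescoping,
\[
\jsCard{\LB{w}} \xmd=\xmd \frac{p\xmd\val[p]{w}-p^\ell+1-s(w)}{p-1}\eqvrg
\]
where $s(w)=c_1+c_2+\cdots+c_\ell$ denotes the digit sum of~$w$.

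Next I would apply \theor{lb-car-pro} with $u=\reprp{N}$ of length~$\ell$, treating first the generic case $u\notin\Maxlg$ (so $\Succ{u}$ has length~$\ell$ and value $N+1$). The sum up to length $\ell-1$ equals $\sum_{k=0}^{\ell-1}\bfvp{k}=(p^\ell-1)/(p-1)$, and adding $\jsCard{\LB{\Succ{u}}}$ from the closed form above makes the two $p^\ell$ terms cancel:
\[
\sum_{i=0}^{N}\cpL{i} \xmd=\xmd \frac{p(N+1)-s(\Succ{u})}{p-1}\eqpnt
\]
Dividing by $N+1$ and using $s(\Succ{u})\le(p-1)(\ell+1)=\grando{\log_p N}$ gives $\scp{N+1}/(N+1)=p/(p-1)+\grando{\log N/N}$; the sparse case $u\in\Maxlg$ (that is, $N=p^\ell-1$) is covered by the aggregated identity $\scp{p^\ell}=(p^{\ell+1}-1)/(p-1)$. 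Letting $N\to\infty$ one concludes $\CP[p]=p/(p-1)$. The main --- and essentially only substantive --- technical obstacle will be the closed-form evaluation of $\jsCard{\LB{w}}$; once it is in hand the rest is algebraic telescoping and an elementary logarithmic estimate.
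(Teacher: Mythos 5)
Your proof is correct, and it takes a genuinely different route from the paper's. Both proofs start from \theor{lb-car-pro} / \corol{sum-cp} and isolate the quantity $\jsCard{\LB[p]{\Succ[p]{u}}}$; the difference is in how that quantity is handled. The paper estimates it level by level, showing $\jsCard{\LB[p]{v}\cap A^{\ell-k}}=\lfloor M/p^{k}\rfloor$ and then summing the floor inequalities to sandwich $\jsCard{\LB[p]{v}}$ between $M\xmd\frac{p}{p-1}-(\ell+1)$ and $M\xmd\frac{p}{p-1}$. You instead count, for each prefix $w_{[j]}$ of $w=c_1\cdots c_\ell$, the left siblings ($c_1-1$ at the root, $c_j$ below) and the complete $p$-ary subtrees they carry, which telescopes to the exact identity
\begin{equation}
\jsCard{\LB[p]{w}}=\frac{p\xmd\val[p]{w}-p^{\ell}+1-s(w)}{p-1}
\eqvrg
\notag
\end{equation}
with $s(w)$ the digit sum (easily checked on small cases, e.g.\ $p=3$, $w=21$ gives $(21-9+1-3)/2=5$). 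Adding $\sum_{i=0}^{\ell-1}\bfvp{i}=(p^{\ell}-1)/(p-1)$ makes the $p^{\ell}$ terms cancel and yields the clean exact form $\scp[p]{N+1}=\bigl(p(N+1)-s(\Succ[p]{u})\bigr)/(p-1)$, from which the limit follows because $s(\Succ[p]{u})=\grando{\log N}$. What each approach buys: your exact formula is shorter and more transparent for the integer-base case, pinning down the error term as a bounded multiple of the digit sum rather than a sandwich; the paper's inequality-based estimate is less sharp here but is precisely the technique that carries over to the eventually-periodic-signature setting of \theor{per-sig}, where no such closed form is available. Both arguments are sound; yours is essentially a self-contained special-purpose computation, the paper's is a warm-up for the general combinatorial theorem.
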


\begin{proof}
Let~$N$ in~$\N$ and~$u=\repr[p]{N}$; 
we have: $\msp\bfvp{\ell-1}\leq N<\bfvp{\ell}\msp$ with~$\ell=|u|$.
And then, by Corollary~\ref{c.sum-cp}:
\begin{equation}
\scp[p]{N} = 
\sum_{i=0}^{\ell-1} \bfvp{i} + \jsCard{\LB[p]{\Succ[p]{u}}}
\eqpnt
\label{q.int-bas-1}
\end{equation}

\noindent 
First, and since~$\bfvp{k}=p^{k}$, for every~$k$, then, by 
\lemme{cal-cla},
\begin{gather}
\lim_{\ell\to\infty}\frac{1}{\bfvp{\ell-1}}
\sum_{i=0}^{\ell-1}\bfvp{i}
=\frac{p}{p-1}
\notag\\
\intertext{which can be written as:}
\sum_{i=0}^{\ell-1}\bfvp{i} =
\bfvp{\ell-1}\left(\frac{p}{p-1}+\epsilon(\ell)\right)
\ee\text{with }
\lim_{\ell\to\infty}\xmd\epsilon(\ell)=0
\eqpnt 
\label{q.int-bas-2}
\end{gather}

\noindent
Second, we turn to the evaluation of~$\jsCard{\LB[p]{\Succ[p]{u}}}$.
Let~$v=\Succ[p]{u}$;
we exclude the case where~$N=\bfvp{\ell}-1$ and~$|v|=\ell+1$
(which corresponds to $\scp[p]{N} = \sum_{i=0}^{\ell} \bfvp{i}$)
and we write 
\begin{equation}
N=\bfvp{\ell-1} + (M-1)
    \e\text{with}\e
    1\leq M<\up{\ell}
\eqpnt
\notag
\end{equation}
We write~$\Tcp$ for the language tree of~$\Lp$, $\Tcpl$ for its 
truncation to length~$\ell$.
Every node of~$\Tcp$, every internal node of~$\Tcpl$,
is of degree~$p$, but the root, which is of 
degree~$p-1$.

Since~$\reprp{\bfvp{\ell-1}}$ is the smallest word of~$\Lp$ of 
length~$\ell$, $v$~is the $(M+1)$-th word of~$\Lp$ of length~$\ell$ 
and~$\jsCard{\LB[p]{v}\cap A^{\ell}}=M$. 
We suppose that~$\ell>1$ (which is not a restriction since we
want~$\ell$ to tend to infinity).
Since every internal node of~$\Tcpl$ at level~$\ell-1$ is of
degree~$p$:
\begin{equation}
\jsCard{\LB[p]{v}\cap A^{\ell-1}}=\Inte{\frac{M}{p}}
\eqpnt
\notag
\end{equation}
By induction on~$k$, $1\leq k<\ell-1$, and with the same argument: 
\begin{equation}
\jsCard{\LB[p]{v}\cap A^{\ell-k}}=\Inte{\frac{M}{p^{k}}}
\eqpnt
\notag
\end{equation}
From the inequalities
$\msp\frac{M}{p^{k}}-1
   \leq\Inte{\frac{M}{p^{k}}}
   \leq\frac{M}{p^{k}}\msp$,  
we first get 
$$M+\sum_{k=1}^{\ell-1}\Inte{\frac{M}{p^{k}}}\geq 
M+\sum_{k=1}^{\ell-1}\frac{M}{p^k}-(\ell-1)
=M\frac{p}{p-1}-\ell+1-\frac{M}{p^{\ell-1}(p-1)}$$
and since~$M<p^{\ell}$, 
$1-M/(p^{\ell-1}(p-1))\geq -1$, 
it leads to the lower and upper bounds  
\begin{equation}
M\frac{p}{p-1}-(\ell+1)\leq\jsCard{\LB[p]{v}}\leq M\frac{p}{p-1}
\eqpnt
\notag
\end{equation}
Together with \equnm{int-bas-2}, they yield the bounds
\begin{equation}
(N+1)\frac{p}{p-1}+\bfvp{\ell-1}\xmd\epsilon(\ell)-(\ell+1)
\leq\scp[p]{N}\leq 
(N+1)\frac{p}{p-1}+\bfvp{\ell-1}\xmd\epsilon(\ell)
\EqPnt
\notag
\end{equation}
If we divide by~$N$, both the lower and upper bounds tend 
to~$\frac{p}{p-1}$ when~$N$ tends to infinity, 
hence~$\frac{1}{N}\scp[p]{N}$ has a limit, and this limit 
is~$\frac{p}{p-1}$.
\end{proof}

After \propo{car-pro-agg}, it is natural to extract from the sequence
of means of carry propagations up to the first~$N$ words of~$L$, those
that correspond to the first~$\vLl$ words of~$L$.

\begin{definition}
\label{d.car-pro-fil} 
For a language~$L$, we call the limit, \emph{if it exists}, of the
mean of the carry propagation at the first~$\vLl$ words of~$L$ the
\emph{length-filtered carry propagation} of~$L$, and we denote it
by~$\FCPL$:
\begin{equation}
\FCPL = \lim_{\ell\to\infty}\xmd 
	\frac{1}{\vLl}\scp{\vLl} 
      = \lim_{\ell\to\infty}\xmd 
	\frac{1}{\vLl}\sum_{i=0}^{\ell} \vL{i}
    \eqpnt
    \label{q.agg-3}
\end{equation}
\end{definition}

\begin{remark}\label{r.fcpl-cpl}
Of course, 
if $\CPL$ exists, then $\FCPL$ exists and $\CPL=\FCPL$ but the 
converse does not hold as we shall see with \exemp{un-bal}.
On the other hand, an easy way for showing that~$\CPL$ does not 
exist is to prove that~$\FCPL$ does not exist.
\end{remark}

\subsection{The local growth rate and the carry propagation}
\label{s.car-pro-gro}%

From \propo{car-pro-agg} it also follows that the carry propagation of
a language~$L$ is closely related to other growth measures of~$L$.
It is the case in particular of the growth rates.

First, the \emph{global growth rate}~$\eta_L$ of a language~$L$
(called \emph{growth rate} in~\cite{Shur08} for instance)
is classically defined by:
\begin{equation}
    \eta_L= \limsup_{\ell\to\infty}\xmd \sqrt[\ell]{\uL{\ell}}
    \eqpnt
    \notag
\end{equation}
A language~$L$ is said to have \emph{exponential growth}
if $\eta_L>1$ and \emph{polynomial growth} 
if~$\uL{\ell}\leq P(\ell)$ for some polynomial~$P$ and all large 
enough~$\ell$.

\begin{example}
\label{e.pol-gro}%
\textbf{Languages with polynomial growth.}
Let $L$ be a \pce language such that $\wlgl=P(\ell)$ for
some polynomial~$P$ of degree~$d$. 
Then~$\vLl$ is a polynomial of degree~$d+1$ and by 
\propo{car-pro-agg}, 
$\scp{\vLl}$ is a polynomial of degree~$d+2$.
Hence
$\lim_{\ell\to\infty}\frac{1}{\vLl}\scp{\vLl}=+\infty$
and~$\FCPL$ does not exist.
\end{example}

\begin{definition}
\label{d.ins-gro-rat}%
We call the limit, \emph{if it exists}, of the ratio between the
number of words of a language~$L$ of length~$\ell$ and the number of
words of~$L$ of length~$\ell+1$, when~$\ell$ tends to infinity, the
\emph{local growth rate} of~$L$, and we denote it by~$\igrr$:
\begin{equation}
\igrr=\lim_{\ell\to +\infty}\xmd\frac{\uL{\ell+1}}{\uLl}
\eqpnt
\notag
\end{equation}
\end{definition}

\begin{remark}
\label{r.igr}%
Observe that the quantity~$\eta_L$ always exists since it is defined 
by an upper limmit. 
If the local growth rate~$\igrr$ exists, then $\igrr=\eta_L$.
\end{remark}

The definition of length-filtered carry propagation (see
\defin{car-pro-fil}) together with \propo{car-pro-agg} directly
implies the following.

\begin{proposition}
\label{p.car-pro-fil}%
Let~$L$ be a \pce language with exponential growth. 
Then, $\FCPL$ exists if and only if~$\igrr$ exists and, in this 
case, $\displaystyle{\FCPL =\frac{\igrr}{\igrr-1}}$ holds.
\end{proposition}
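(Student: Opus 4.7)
The starting point is Proposition~\ref{p.car-pro-agg}, which for a \pce language yields the length-wise identity $\sum_{|w|=\ell}\cpL{w} = \vL{\ell}$. Summing this identity over lengths $0,1,\ldots,\ell$ rewrites the definition of $\FCPL$ given in \equnmnosp{agg-3} as
\begin{equation*}
\FCPL = \lim_{\ell\to\infty} \frac{1}{\vL{\ell}} \sum_{i=0}^{\ell} \vL{i},
\end{equation*}
which is exactly the ratio that appears as item (iii) in Lemma~\ref{l.cal-cla}, applied to $x(\ell) = \vL{\ell}$ and $y(\ell) = \sum_{i=0}^{\ell} \vL{i}$.

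The plan is then to apply Lemma~\ref{l.cal-cla} twice in order to chain two equivalences. In the first application, with $x(\ell) = \vL{\ell}$ and $y(\ell) = \sum_{i=0}^{\ell} \vL{i}$, the equivalence (i)$\Leftrightarrow$(iii) gives that $\FCPL$ exists if and only if $\gamma = \lim_{\ell\to\infty} \vL{\ell+1}/\vL{\ell}$ exists, with $\FCPL = \gamma/(\gamma-1)$. In the second application, with $x(\ell) = \uL{\ell}$ and $y(\ell) = \vL{\ell}$, so that again $y(\ell) = \sum_{i=0}^{\ell} x(i)$, the equivalence (i)$\Leftrightarrow$(ii) gives that $\lim \vL{\ell+1}/\vL{\ell}$ exists if and only if $\igrr = \lim \uL{\ell+1}/\uL{\ell}$ exists, the two limits being equal. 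Composing these two equivalences yields the proposition.

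The main point to verify is that the hypotheses of Lemma~\ref{l.cal-cla} hold, in particular the condition that the limit of the ratios strictly exceed $1$. The \pce property gives at once that $\uL{\ell}$ and $\vL{\ell}$ are positive and non-decreasing sequences, since every word of $L$ has a strict extension in $L$. In the forward direction, Remark~\ref{r.igr} identifies $\igrr$ with $\eta_L$ whenever $\igrr$ exists, so the exponential-growth hypothesis $\eta_L > 1$ yields $\gamma = \igrr > 1$ immediately, and the lemma applies to produce $\FCPL = \igrr/(\igrr - 1)$. In the converse direction one has to argue that if $\FCPL$ is assumed to exist, then necessarily $\FCPL > 1$ (equivalently, that the partial sums $\sum_{i=0}^{\ell} \vL{i}$ are not swallowed by the last term $\vL{\ell}$), so that the corresponding $\gamma$ obtained from item (iii) exceeds $1$. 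This strict-inequality bookkeeping, which leans on $\eta_L > 1$, is the only delicate step; everything else amounts to invoking the two results already available.
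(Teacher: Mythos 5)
Your proof follows the same route as the paper's: two applications of Lemma~\ref{l.cal-cla}, chained through the intermediate quantity $\lim_{\ell\to\infty}\vL{\ell+1}/\vLl$ (you list the two applications in the opposite order, but this changes nothing). You are also right to single out the condition $\gamma>1$ appearing in every item of Lemma~\ref{l.cal-cla} as the only point deserving care --- the paper's own proof passes over it silently. However, your treatment of that point is both incomplete and misattributed.

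In the converse direction (from $\FCPL$ to $\igrr$), the fact that $\FCPL>1$ whenever the limit exists does not ``lean on $\eta_L>1$''; it comes from the finiteness of the alphabet. Because $L$ is \pce one has $1\le\uL{\ell}\le\jsCard{A}\xmd\uL{\ell-1}\le\jsCard{A}\xmd\vL{\ell-1}$, hence $\vL{\ell}=\vL{\ell-1}+\uL{\ell}\le(1+\jsCard{A})\xmd\vL{\ell-1}$, and therefore
\begin{equation*}
\frac{1}{\vLl}\sum_{i=0}^{\ell}\vL{i}\;\ge\;1+\frac{\vL{\ell-1}}{\vLl}\;\ge\;1+\frac{1}{1+\jsCard{A}}
\end{equation*}
for all $\ell\ge1$, which forces $\FCPL>1$ and makes item~(iii) of the lemma applicable. (Over an unbounded digit set this would fail: $\uL{\ell}=2^{2^\ell}$ has $\eta_L=\infty$ and yet $\FCPL=1$.) The exponential-growth hypothesis $\eta_L>1$ is instead what the \emph{forward} implication needs, to guarantee via Remark~\ref{r.igr} that $\igrr=\eta_L>1$ once $\igrr$ exists, and that direction you handle correctly.
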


\ifelsevier
\begin{proof}
\else
\begin{proofss}
\fi
Using \lemme{cal-cla}, if
$\lim_{\ell\to\infty}\frac{\uL{\ell+1}}{\uLl}=\igrr$,
then $\lim_{\ell\to\infty}\frac{\vL{\ell+1}}{\vLl}=\igrr$. 
Using again \lemme{cal-cla}, the latter limit exists if and only if 
\begin{equation}
\FCPL=\lim_{\ell\to\infty}\xmd\frac{1}{\vLl}\sum_{i=0}^{\ell}\vL{i}
     =\frac{\igrr}{\igrr-1}
\eqpnt
\ifelsevier
\else
    \tag*{\EoP}
\fi
\end{equation} 
\ifelsevier
\end{proof}
\else
\end{proofss}
\fi

From \remar{fcpl-cpl}, the following holds, which extends the case of
numeration in base~$p$ described in \propo{int-bas}.

\begin{corollary}
\label{c.car-pro-fil}%
If the carry propagation~$\CPL$ exists, then the local growth~$\igrr$
exists and $\displaystyle{\CPL=\frac{\igrr}{\igrr-1}}$.
\end{corollary}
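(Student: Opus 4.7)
The plan is to deduce the corollary from \remar{fcpl-cpl} and \lemme{cal-cla}. Since $\CPL$ is assumed to exist, \remar{fcpl-cpl} immediately gives that $\FCPL$ exists and that $\CPL = \FCPL$, so it suffices to show that $\igrr$ exists and equals $\FCPL/(\FCPL-1)$.

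I then apply \lemme{cal-cla} twice, in tandem. For the first application, take $x(n)=\vLn$ and $y(n)=\sum_{i=0}^n\vL{i}$; by definition of $\FCPL$, the ratio $y(n)/x(n)$ converges to $\FCPL$. To be in position to invoke the implication (iii)$\Rightarrow$(i) of the lemma I must check that this limit is strictly greater than $1$. This follows from a uniform bound: since $L$ is \pce over the finite alphabet $A$, each node of $\TcL$ at depth $\ell-1$ has at most $|A|$ children, so $\uLl\le|A|\xmd\uL{\ell-1}$, whence $\vL{\ell-1}/\vLl\ge 1/(1+|A|)$, and therefore $y(\ell)/x(\ell)\ge 1+1/(1+|A|)$ for every $\ell\ge 1$. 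Setting $\gamma:=\FCPL/(\FCPL-1)>1$, \lemme{cal-cla} then yields $\vL{n+1}/\vLn\to\gamma$.

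Second, I apply \lemme{cal-cla} to $x(n)=\uLn$ and $y(n)=\vLn=\sum_{i=0}^n\uL{i}$. The ratio obtained in the first step is precisely the ratio $y(n+1)/y(n)$ of condition (ii), so (ii)$\Rightarrow$(i) gives $\uL{n+1}/\uLn\to\gamma$, \ie $\igrr=\gamma$ exists. Hence $\CPL=\FCPL=\gamma/(\gamma-1)=\igrr/(\igrr-1)$, as desired. The only delicate point in the plan is the strict inequality $\FCPL>1$, without which \lemme{cal-cla} cannot be invoked; once this is established, everything else is a mechanical chaining of the two directions of the lemma.
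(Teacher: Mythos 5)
Your proof is correct and follows essentially the same route as the paper: combine \remar{fcpl-cpl} with the equivalence $\FCPL$ exists $\Leftrightarrow$ $\igrr$ exists, obtained by chaining the three conditions of \lemme{cal-cla} first on $(\vLn)$ and then on $(\uLn)$, which is precisely the mechanism behind \propo{car-pro-fil}. The one thing you make explicit that the paper leaves implicit (hidden in the exponential-growth hypothesis of \propo{car-pro-fil}) is the verification that $\FCPL>1$ so that $\gamma=\FCPL/(\FCPL-1)>1$; your bound $\uLl\le|A|\,\uL{\ell-1}$ giving $\FCPL\ge 1+1/(1+|A|)$ is a clean and correct way to discharge this.
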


However, the existence of~$\igrr$, and hence of~$\LFCAPR$, does not
imply in general the existence of the carry propagation~$\CAPR$ of a
language, as witnessed by the following example.

\begin{example}
\label{e.un-bal}%
\textbf{A language with an unbalanced tree.}
Let~$A=\{a,b,c\}$.
The \pce language~$H$ we build will be such that
$\bfu[H]{\ell}=2^\ell$, for every~$\ell$.
We denote by~$H_{\ell}$ the set~$H\cap A^\ell$ and by~$H'_{\ell}$ 
(resp.~$H''_{\ell}$) the first (resp. the last) $2^{\ell-1}$ words of 
length~$\ell$ in the radix ordered language $H_{\ell}$.
Set $H_1 =\{a,c\}$. 
For all $\ell > 0$, 
$H_{\ell+1} = \{H'_{\ell}\} A \cup\{H''_{\ell}\}b$. 
Thus we get $H_2 = \{aa, ab, ac, cb\}$, 
$H_3 =\{aaa, aab, aac, aba, abb, abc, acb, cbb\}$ 
and it is clear that
$\bfu[H]{\ell}=2^\ell$ and $\bfv[H]{\ell}=2^{\ell+1}-1$.
Hence~$\igrr[H]=2$ and~$\FCP[H]=2$ by \propo{car-pro-fil}.  

\begin{figure}[htbp]
\setlength{\lga}{1cm}
\setlength{\lgb}{2.1cm}
\centering
\VCDraw{%
\begin{VCPicture}{(0,0)(18\lga,3.3\lgb)}
\VCPut{(0,0)}{\multido{\i=0+1}{13}{\VSState{(\i\lga,0)}{A\i}}%
              \VSState{(14\lga,0)}{A13}%
              \VSState{(16\lga,0)}{A14}%
              \VSState{(18\lga,0)}{A15}}%
\VCPut{(0,\lgb)}{\multido{\i=1+3}{4}{\VSState{(\i\lga,0)}{B\i}}
                 \VSState{(12\lga,0)}{B12}%
                 \VSState{(14\lga,0)}{B13}%
                 \VSState{(16\lga,0)}{B14}%
                 \VSState{(18\lga,0)}{B15}%
		 }%
\VCPut{(0,2\lgb)}{\VSState{(4\lga,0)}{C0}%
              \VSState{(12\lga,0)}{C1}%
	      \VSState{(16\lga,0)}{C2}%
	      \VSState{(18\lga,0)}{C3}}%
\VCPut{(0,3\lgb)}{\VSState{(12\lga,0)}{D0}%
	          \VSState{(18\lga,0)}{D1}}%
\VCPut{(0,3.3\lgb)}{\VSState{(15\lga,0)}{E0}}%
\ChgEdgeArrowStyle{-}
\ChgEdgeLabelScale{.8}
\EdgeR{E0}{D0}{a}
\EdgeL{E0}{D1}{c}
\EdgeR{D0}{C0}{a}
\EdgeR{D0}{C1}{b}
\EdgeL{D0}{C2}{c}
\EdgeL{D1}{C3}{b}
\EdgeR{C0}{B1}{a}
\EdgeR{C0}{B4}{b}
\EdgeL{C0}{B7}{c}
\EdgeR{C1}{B10}{a}
\EdgeR{C1}{B12}{b}
\EdgeL{C1}{B13}{c}
\EdgeL{C2}{B14}{b}
\EdgeL{C3}{B15}{b}
\EdgeR{B1}{A0}{a}
\EdgeR{B1}{A1}{b}
\EdgeL{B1}{A2}{c}
\EdgeR{B4}{A3}{a}
\EdgeR{B4}{A4}{b}
\EdgeL{B4}{A5}{c}
\EdgeR{B7}{A6}{a}
\EdgeR{B7}{A7}{b}
\EdgeL{B7}{A8}{c}
\EdgeR{B10}{A9}{a}
\EdgeR{B10}{A10}{b}
\EdgeL{B10}{A11}{c}
\EdgeL{B12}{A12}{b}
\EdgeL{B13}{A13}{b}
\EdgeL{B14}{A14}{b}
\EdgeL{B15}{A15}{b}
\end{VCPicture}
}
\caption{The first $5$ levels of~$\Tc_{H}$.}
\label{f.trie-H}
\end{figure}

\noindent Let, for every~$\ell$, $M(\ell)=2^{\ell+1}-1+2^\ell$ and let
us evaluate~$\scp[H]{M(\ell)}$.

\thi The contribution to~$\scp[H]{M(\ell)}$ of the words of length 
less than, or equal to,~$\ell$ is equal to
$C=\sum_{i=0}^\ell\bfv[H]{i}=\sum_{i=0}^\ell(2^{i+1}-1)
  =2^{\ell+2}-\ell-2$.

\thii By construction of~$H$, the elements of~$H'_{\ell+1}$ are the
leftmost $2^\ell$ leaves of a ternary tree of height~$k$ such that
$3^k>2^\ell$.  Since the carry propagation in base~$3$ is equal
to~$3/2$ (as seen in the proof of \propo{int-bas}), the contribution
of the elements of $H'_{\ell+1}$ to~$\scp[H]{M(\ell)}$ is less than
$D=2^\ell \times 3/2=2^{\ell-1} \times 3$.
We have:
\begin{equation}
    \frac{1}{M(\ell)}\scp[H]{M(\ell)} <
    \frac{C+D}{2^{\ell+1}+2^\ell}=
    \frac{2^{\ell+2}-\ell-2+2^{\ell-1} \times 3}{2^{\ell+1}+2^\ell}
    \eqpnt
    \notag
\end{equation}
Hence
\begin{equation}
\lim_{\ell\to\infty}\frac{1}{M(\ell)}\scp[H]{M(\ell)}
    \le\frac{11}{6}\neq
\lim_{\ell\to\infty}\frac{1}{\bfv[H]{\ell}}\scp[H]{\bfv[H]{\ell}}
    =\FCP[H]=2
\eqpnt
\notag
\end{equation}
The quantity~$\frac{1}{N}\scp[H]{N}$ has no limit when~$N$ tends to 
infinity and~$\CP[H]$ does not exist.
\end{example}

In view of \secti{car-pro-rat} where we prove that the existence 
of the local growth rate of a language and its quotients (see 
\theor{cp-rat-lan}) is a sufficient condition for a \emph{rational} 
language to have a carry propagation, let us add that this 
language~$H$ is easily seen not to be rational. 
Indeed let~$m=m(n)$ be the smallest integer such that~$a^nb^ma$ is 
not in~$H$. 
Then for every~$n'>n$, $a^{n'}b^{m}a$ is an element of $H$, thus 
the words~$a^{n}$ have all distinct sets of right
contexts for~$H$.

\subsection{The signature and the carry propagation}
\label{s.sig-car-pro}%

\theor{lb-car-pro} and its proof make clear that the actual words of a
language~$L$, that is, the \emph{labeling} of the language
tree~$\TcL$, have no impact on the carry propagation of~$L$, its
existence or its value, but what only counts is the \emph{shape}
of~$\TcL$ or, in one more precise word, its \emph{signature},
introduced in \cite{MarsSaka17a,MarsSaka17b}, and that we now define.

First, we introduce a slightly different look at trees that proves to 
be technically fit to the description and study of language trees 
associated with languages seen as abstract numeration systems (see 
\secti{lan-ans} and~\ref{s.lan-tre}).

Given a tree, we consider that in addition to all edges, the root is 
also a \emph{child of itself}, that is, bears a loop onto 
itself.\footnote{%
    This convention is sometimes taken when implementing tree-like
    structures (for instance in the Unix/Linux file system).}
We call such a structure an \emph{i-tree}.\footnote{%
   The terminology comes indeed from the terminology for 
   \emph{inodes} in the Unix/Linux file system.} 
It is so close to a tree that we pass from one to the other
with no further ado.
When a tree is usually denoted by~$\Tc_{x}$ for some index~$x$,
the associated i-tree is denoted by~$\Ic_{x}$, and conversely.

If the tree~$\Tc_{x}$ is labeled by letters of an ordered
alphabet~$A$, we want the loop on the root of the i-tree~$\Ic_{x}$ to
be labeled by a letter less than the labels of all other edges going
out of the root in~$\Tc_{x}$.
Either there exists a letter in~$A$ which meets the condition and it 
can be chosen as label for the loop, or such a letter does not exist 
in~$A$ and we enlarge  the alphabet~$A$ with a new symbol, less  
than all letters of~$A$.
\figur{Fib-tre-itr} shows the language tree of the representation 
language in the Fibonacci numeration system and the associated 
i-tree. 

\begin{figure}[htbp]
\centering
\subfigure[$\Tc_{F}$]{%
\setlength{\lga}{1.4cm}%
\setlength{\lgb}{1.4cm}%
\VCDraw{%
\begin{VCPicture}{(0,0)(4\lga,7)}
\SmallState
\ChgStateLabelScale{.6}
\VCPut{(0,0)}{\State[8]{(0,0)}{A0}%
              \State[9]{(1\lga,0)}{A1}%
              \State[10]{(2\lga,0)}{A2}%
              \State[11]{(3\lga,0)}{A3}%
              \State[12]{(4\lga,0)}{A4}}%
\VCPut{(0,\lgb)}{\State[5]{(.5\lga,0)}{B0}%
                 \State[6]{(2\lga,0)}{B1}%
                 \State[7]{(3.5\lga,0)}{B2}}%
\VCPut{(0,2\lgb)}{\State[3]{(1.25\lga,0)}{C0}
                  \State[4]{(3.5\lga,0)}{C1}}%
\VCPut{(2.375\lga,3\lgb)}{\State[2]{(0,0)}{D0}}%
\VCPut{(2.375\lga,4\lgb)}{\State[1]{(0,0)}{E0}}%
\VCPut{(2.375\lga,5\lgb)}{\State[0]{(0,0)}{F0}}%
\ChgEdgeArrowStyle{-}
\ChgEdgeLabelScale{.8}
\EdgeL{F0}{E0}{1}
\EdgeR{E0}{D0}{0}
\EdgeR{D0}{C0}{0}
\EdgeL{D0}{C1}{1}
\EdgeR{C0}{B0}{0}
\EdgeL{C0}{B1}{1}
\EdgeR{C1}{B2}{0}
\EdgeR{B0}{A0}{0}
\EdgeL{B0}{A1}{1}
\EdgeR{B1}{A2}{0}
\EdgeR{B2}{A3}{0}
\EdgeL{B2}{A4}{1}
\end{VCPicture}}
}
\eee\ee
\subfigure[$\Ic_{F}$]{%
\setlength{\lga}{1.4cm}%
\setlength{\lgb}{1.4cm}%
\VCDraw{%
\begin{VCPicture}{(0,0)(4\lga,7)}
\SmallState
\ChgStateLabelScale{.6}
\VCPut{(0,0)}{\State[8]{(0,0)}{A0}%
              \State[9]{(1\lga,0)}{A1}%
              \State[10]{(2\lga,0)}{A2}%
              \State[11]{(3\lga,0)}{A3}%
              \State[12]{(4\lga,0)}{A4}}%
\VCPut{(0,\lgb)}{\State[5]{(.5\lga,0)}{B0}%
                 \State[6]{(2\lga,0)}{B1}%
                 \State[7]{(3.5\lga,0)}{B2}}%
\VCPut{(0,2\lgb)}{\State[3]{(1.25\lga,0)}{C0}
                  \State[4]{(3.5\lga,0)}{C1}}%
\VCPut{(2.375\lga,3\lgb)}{\State[2]{(0,0)}{D0}}%
\VCPut{(2.375\lga,4\lgb)}{\State[1]{(0,0)}{E0}}%
\VCPut{(2.375\lga,5\lgb)}{\State[0]{(0,0)}{F0}}%
\ChgEdgeLabelScale{.8}
\CLoopSW[.75]{F0}{0}
\ChgEdgeArrowStyle{-}
\EdgeL{F0}{E0}{1}
\EdgeR{E0}{D0}{0}
\EdgeR{D0}{C0}{0}
\EdgeL{D0}{C1}{1}
\EdgeR{C0}{B0}{0}
\EdgeL{C0}{B1}{1}
\EdgeR{C1}{B2}{0}
\EdgeR{B0}{A0}{0}
\EdgeL{B0}{A1}{1}
\EdgeR{B1}{A2}{0}
\EdgeR{B2}{A3}{0}
\EdgeL{B2}{A4}{1}
\end{VCPicture}}
}
\caption{Tree and i-tree associated with the Fibonacci numeration 
system.}
\label{f.Fib-tre-itr}
\end{figure}

The \emph{degree} of a node in a tree, or in an i-tree, is the number
of its children.
The \emph{signature~$\sigs_{x}$ of a tree~$\Tc_{x}$} is the sequence
of the degrees of the nodes of the associated i-tree~$\Ic_{x}$ in the
breadth-first traversal.
For instance, the signature of~$\Tc_{F}$ is
$\msp\sigs_{F}= 2\xmd 1\xmd 2\xmd 2\xmd 1\xmd 2\xmd 1\xmd 2\xmd 
\cdots\msp$,
the signature of~$\Tc_{p}$ for the numeration in base $p$ is the 
constant sequence 
$\msp\sigs_{p}= p^{\omega}\msp$ for any base~$p>1$.

Conversely, we call \emph{signature} any sequence~$\sigs$ of 
non-negative integers:\\ 
$\msp\sigs=s_0\xmd s_1\xmd s_1\cdots\msp$ and a 
signature is \emph{valid} if the following condition holds:
\begin{equation}
\forall j\in\N \quantsp 
\sum_{\smash{i=0}}^{j} s_{i} > j + 1
\eqpnt
\notag
\end{equation}
Infinite trees and valid signatures are then in a 1-to-1 
correspondence as expressed by the following.

\begin{proposition}[\cite{MarsSaka17a}]
\label{p.sig}%
The signature of an infinite tree is valid and a valid signature is 
the signature of a unique (i-)tree (up to the labeling).
\end{proposition}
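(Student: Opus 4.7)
\begin{proofss}[Proof sketch]
The plan is to set up a canonical bijection between valid signatures and infinite i-trees by making the BFS labeling explicit. For any infinite i-tree $\Ic$, enumerate its nodes $v_0,v_1,v_2,\ldots$ in breadth-first order starting at the root $v_0$, and agree that the self-loop at $v_0$ is the first among the children of $v_0$, so that $v_0$ appears as its own first child. Writing $s_i$ for the degree of $v_i$ in $\Ic$, we obtain the signature $\sigs = s_0\,s_1\,s_2\cdots$. Define the partial sums
\begin{equation}
c_0 = 0, \quad c_{j+1} = \sum_{i=0}^{j} s_i \quad (j\ge 0).
\notag
\end{equation}

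The first step I would carry out is to check by induction on $j$ that in such a BFS enumeration the children of $v_j$ are exactly $v_{c_j}, v_{c_j+1}, \ldots, v_{c_{j+1}-1}$. For $j = 0$ this follows from the convention that the self-loop $v_0 \to v_0$ is listed first, so that the children of $v_0$ are $v_0, v_1, \ldots, v_{s_0 - 1}$. For the induction step, BFS enumerates the children of $v_{j+1}$ immediately after those of $v_j$, and because an i-tree (tree plus a unique self-loop at the root) has no other cycles, all these new children are fresh, unvisited vertices which are therefore labeled $v_{c_{j+1}}, \ldots, v_{c_{j+2}-1}$.

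Next I would deduce the necessary direction: if $\Ic$ is infinite, then every natural number $m$ must appear as the index of some visited vertex $v_m$. By the formula just established, $v_{j+1}$ exists exactly when the interval $[c_j, c_{j+1})$ or some later interval contains $j+1$; equivalently, $v_{j+1}$ is a child of one of $v_0, \ldots, v_j$ iff $c_{j+1} > j+1$. Thus infiniteness of $\Ic$ forces $\sum_{i=0}^{j} s_i > j+1$ for every $j \in \N$, which is validity.

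For the converse and uniqueness, given any valid signature $\sigs$ I would declare node $i$'s children to be $\{c_i, c_i+1, \ldots, c_{i+1}-1\}$. Validity ensures that for every $j \ge 0$ there is some $i$ with $c_i \le j < c_{i+1}$, and the monotonicity of $(c_i)$ makes this $i$ unique, so every node has exactly one parent (with $0$ being its own parent via the self-loop). The resulting structure is an infinite i-tree whose BFS signature, by step one, is again $\sigs$. Finally, if two infinite i-trees share the same signature, the formula for children in the BFS enumeration produces the same labeled parent-child relation on $\N$, giving an isomorphism (up to labeling of edges). The only subtle step, and the one I would write out most carefully, is the inductive argument that BFS children of $v_j$ occupy precisely the indices $[c_j, c_{j+1})$; everything else is a direct bookkeeping consequence.
\end{proofss}
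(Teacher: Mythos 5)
The paper does not prove \propo{sig}; it is stated with a citation to \cite{MarsSaka17a}, so there is no in-paper argument to compare against. Evaluating your sketch on its own terms: the approach is the natural one and it is correct. The key observation --- that under the breadth-first enumeration with the self-loop listed first, the children of $v_j$ occupy exactly the index interval $[c_j, c_{j+1})$ where $c_{j+1}=\sum_{i=0}^{j}s_i$ --- is stated and used properly, and both directions (infinite tree $\Rightarrow$ valid signature, valid signature $\Rightarrow$ unique i-tree) follow from it by clean bookkeeping.

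Two small points you would want to tighten in a full write-up. First, at $j=0$ the statement that ``all these new children are fresh, unvisited vertices'' does not literally hold, since $v_0$ appears among its own children via the self-loop; the convention is that the self-loop contributes one unit to $s_0$ and one slot in $[c_0,c_1)$ but introduces no new vertex. Your base case handles this correctly but the inductive phrasing should exempt $j=0$. Second, in the converse direction you should note explicitly that validity forces $c_j>j$ for every $j\geq 1$, so the parent assigned to any $j\geq 1$ is strictly smaller than $j$; this is what guarantees the constructed parent relation is acyclic and connected, hence actually an i-tree, rather than merely a well-defined function. With those two clarifications the argument is complete.
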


By extension, the signature of a (prefix-closed) language~$L$ is the 
signature of the language tree~$\TL$.
The language~$L$ is extendable (or~$\TL$ has no finite branch) if and 
only if its signature contains no \lit{0}.
As said above, \emph{the carry propagation of a \pce language~$L$ is
entirely determined by its signature} which determines the `shape'
of~$\TL$.
In view of the next statements, we have to give two further
definitions.

\begin{definition}
\label{d.rhy-gro}%
Let~$p$ and~$q$ be two integers with~$p>q\geq 1$.

\thi We call a \emph{$q$-tuple}~$\rhth$ of non-negative integers
\emph{whose sum is~$p$} a \emph{rhythm} of directing
parameter~$(q,p)$:
\begin{equation}
  \rhth = \rhthtp 
  \ee\text{and}\ee
  \sum_{\smash{i=0}}^{\smash{\iqmu}} r_i = p
  \eqpnt
  \notag
\end{equation}

\thii
A signature~$\sigs$ is \emph{periodic} if there exists a rhythm~$\rhth$ such 
that $\msp\sigs = \rhth^{\omega}\msp$.

\noindent
A signature~$\sigs$ is \emph{eventually periodic} if there exists a 
rhythm~$\rhth$ such that there exist a finite sequence~$\rhth[t]$ of 
non-negative integers and a rhythm~$\rhth$ such that 
$\msp\sigs = \rhth[t]\xmd\rhth^{\omega}\msp$.
\end{definition}

Languages with periodic signatures were  considered and characterized 
in~\cite{MarsSaka17b} in the study of rational base 
numeration systems that we define as follows.

\begin{example}
\label{e.rat-bas}%
\textbf{The rational base numeration systems.}
Let~$\pq$ be a rational number, where $p > q \geq 1$ are two 
\emph{co-prime} integers.

In~\cite{AkiyEtAl08}, it has been shown how to define a numeration system 
with~$\pq$ as a base and where nevertheless integers have finite 
representations.
Let~$N$ be any positive integer; let us write~$N_0=N$ and, 
for~$i\geq0$, let
\begin{equation}
q\xmd N_i =p\xmd N_{i+1}+a_i
\label{q.rat-bas-1}
\end{equation}
where $a_i$ is the remainder of the division of~$q\xmd N_i$ by $p$,
and thus belongs to the digit-alphabet
$\msp\Ap=\{0,\ldots,\pmu\}\msp$. 
Since $N_{i+1}$ is less than $N_{i}$, the division \equnm{rat-bas-1}
can be repeated only a finite number of times, until eventually
$N_{k+1}=0$ for some~$k$.
This algorithm produces the digits $a_0$, $a_1$, \ldots, $a_k$, and:
\begin{equation}
    N= \sum_{i=0}^{k}\frac{a_i}{q}\xmd \left(\pq\right)^i
	\eqpnt
	\notag
\end{equation}
We will say that the word $\msp a_{k} \cdots a_0\msp$,
computed from $N$ from right to left, that is to say,
\emph{least significant digit first},
is a \emph{$\pq$-expansion} of $N$.
It is known that this representation is indeed unique and we denote 
it by~$\reprpq{N}$.
We define the language~$\Lpq$ of~$\Ape$ as the set of $\pq$-expansions 
of the integers:
\begin{equation}
    \Lpq= \Defi{\reprpq{n}}{n\in\N}
       \notag
\end{equation}
and accordingly, we denote by~$\Tpq$ the tree of the language~$\Lpq$. 
When $q=1$, we recover the usual numeration system in base~$p$
and~$\Lpq=\Lp$; in the following, $q\neq1$.
\figur{T32} shows the case~$p=3$ and $q=2$.
\end{example}

\begin{figure}[htbp]
\centering
\setlength{\lga}{1.4cm}%
\setlength{\lgb}{1.4cm}%
\VCDraw{%
\begin{VCPicture}{(0,0)(3\lga,7)}
\SmallState
\ChgStateLabelScale{.6}
\VCPut{(0,0)}{\State[8]{(0,0)}{A0}%
              \State[9]{(1\lga,0)}{A1}%
              \State[10]{(2\lga,0)}{A2}%
              \State[11]{(3\lga,0)}{A3}}%
\VCPut{(0,\lgb)}{\State[5]{(0,0)}{B0}%
                 \State[6]{(1.5\lga,0)}{B1}%
                 \State[7]{(3\lga,0)}{B2}}%
\VCPut{(0,2\lgb)}{\State[3]{(0,0)}{C0}
                  \State[4]{(2.25\lga,0)}{C1}}%
\VCPut{(1.125\lga,3\lgb)}{\State[2]{(0,0)}{D0}}%
\VCPut{(1.125\lga,4\lgb)}{\State[1]{(0,0)}{E0}}%
\VCPut{(1.125\lga,5\lgb)}{\State[0]{(0,0)}{F0}}%
\ChgEdgeLabelScale{.8}
%
\ChgEdgeArrowStyle{-}
\EdgeL{F0}{E0}{2}
\EdgeR{E0}{D0}{1}
\EdgeR{D0}{C0}{0}
\EdgeL{D0}{C1}{2}
\EdgeR{C0}{B0}{1}
\EdgeR{C1}{B1}{0}
\EdgeL{C1}{B2}{2}
\EdgeR{B0}{A0}{1}
\EdgeR{B1}{A1}{0}
\EdgeL{B1}{A2}{2}
\EdgeR{B2}{A3}{1}
%
\end{VCPicture}}
\caption{The first 6 levels of $\Ttd$.}
\label{f.T32}
\end{figure}

\begin{remark}
This definition \emph{is not} the one corresponding to
$\beta$-expansions with~$\beta=\pq$ (see~\secti{beta-num-gen}).
In particular, the digits \emph{are not} the integers less 
than~$\pq$ but rather the integers less than~$p$, hence those
\emph{whose quotient by~$q$} is less than~$\pq$.
\end{remark}

From the classical theory of formal languages point of view, the
language~$\Lpq$ is complex and difficult to understand.
It can be shown not to meet any kind of iteration property (and thus 
not to be rational nor context-free) and such that any two distinct 
subtrees  of~$\Tpq$ are never isomorphic (\cf~\cite{AkiyEtAl08}).
On the other hand, it is easy to verify the following property that 
expresses a certain kind of `regularity' (and that has indeed been the 
motivation for the definition of signatures).

\begin{proposition}[\cite{MarsSaka17b}]
    The signature of~$\Lpq$ is periodic and its period is a rhythm of 
    directing parameter~$(q,p)$.
\end{proposition}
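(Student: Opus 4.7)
The plan is to analyze the children of a node in $\Tpq$ directly from the recurrence \equnm{rat-bas-1}. If a word $w$ in $\Lpq$ of value $N = \val[\pq]{w}$ admits $w\xmd a$ in $\Lpq$ for some digit $a \in \Ap$, then $w\xmd a$ must be the $\pq$-expansion of some integer $M$, and using the MSDF convention one reads
\begin{equation}
M = \frac{p\xmd N + a}{q}
\eqpnt
\notag
\end{equation}
Since $\gcd(p,q)=1$ and $a$ ranges over $\{0,1,\ldots,\pmu\}$, the condition that $M$ is an integer fixes the residue class of $a$ modulo $q$, namely $a\equiv -p\xmd N\pmod{q}$. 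Conversely, every such digit yields a valid child. Therefore the degree of the node of value $N$ in $\Tpq$ equals the cardinality of $\{a\in\Ap : a\equiv -p\xmd N\pmod{q}\}$, a quantity depending only on $N\bmod q$.

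Let $r_i$ denote this cardinality when $N\equiv i\pmod q$, for $i=0,1,\ldots,\iqmu$. The root of $\Tpq$ (which has value $0$) is the only node needing a separate check: the digit $0$ is forbidden at the leading position, so in the tree the root has $r_0 - 1$ children. The i-tree convention of \secti{sig-car-pro} adds exactly one self-loop at the root, restoring the degree to $r_0$ in $\Ic_{\Lpq}$. Thus in the i-tree \emph{every} node of value $N$ has degree $r_{N\bmod q}$.

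It is established in~\cite{AkiyEtAl08} that $\reprpq{\cdot}$ preserves the radix order, i.e.\ $m<n$ implies $\reprpq{m}\rad\reprpq{n}$. Consequently, the breadth-first traversal of $\Ic_{\Lpq}$ visits the nodes in the natural order $0,1,2,\ldots$ of their values. Reading off the degrees therefore yields
\begin{equation}
\sigs_{\pq} = r_0\, r_1\, \cdots\, r_{\iqmu}\, r_0\, r_1\, \cdots\, r_{\iqmu}\, \cdots = \rhthpq^{\omega}
\eqvrg
\notag
\end{equation}
with $\rhthpq = (r_0,r_1,\ldots,r_{\iqmu})$. To finish, observe that since $\gcd(p,q)=1$, the map $i\mapsto -p\xmd i \bmod q$ is a bijection of $\Z/q\Z$, so $(r_0,\ldots,r_{\iqmu})$ is a permutation of the sequence $(s_0,\ldots,s_{\iqmu})$ where $s_j = \jsCard{\{a\in\Ap : a\equiv j\pmod q\}}$; but these $s_j$ partition $\Ap$, whence $\sum_i r_i = \sum_j s_j = p$. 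This is exactly the condition for $\rhthpq$ to be a rhythm of directing parameter $(q,p)$.

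The main, and only slightly delicate, step is the special treatment of the root: one has to verify that the i-tree convention exactly compensates for the forbidden leading zero, so that the periodic pattern holds \emph{from the very first node} rather than only asymptotically. Everything else reduces to elementary arithmetic modulo $q$ combined with the already established order-preservation of $\reprpq{\cdot}$.
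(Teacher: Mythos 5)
Your argument is correct. The paper itself does not prove this proposition but cites it from~\cite{MarsSaka17b}, so there is no in-text proof to compare against; what you have produced is a valid self-contained derivation. The central computation is sound: since the expansion $w\xmd a$ of $M$ is obtained by appending $a = qM \bmod p$ to the expansion of $N = (qM-a)/p$, inverting gives $M=(pN+a)/q$, and the integrality of $M$ forces $a\equiv -pN\pmod q$; conversely, each such $a\in\Ap$ does produce a valid extension (with $M\geq 1$ automatic once $N\geq 1$), so the node degree depends only on $N\bmod q$. You correctly isolate the one genuinely delicate point — that the forbidden leading zero at the root costs exactly one child, which the i-tree self-loop restores, so the period holds from position~$0$ and the signature is purely (not merely eventually) periodic. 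You also correctly invoke the result of~\cite{AkiyEtAl08} that $\reprpq{\cdot}$ is order-preserving, which is what identifies the ANS-value (breadth-first position) with the arithmetic value $N$ used in the congruence; without that identification the degree sequence read off in BFT order would not obviously match $r_{N\bmod q}$. Finally, the observation that $i\mapsto -p\xmd i\bmod q$ is a bijection of $\Z/q\Z$ (by coprimality) shows $\rhthpq$ is a permutation of the fibre-sizes of $\Ap\to\Z/q\Z$ and hence sums to~$p$, which is precisely what makes it a rhythm of directing parameter~$(q,p)$. No gaps.
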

  Note that having a periodic signature is even a characterization of
rational base numeration systems (possibly using a non-canonical
alphabet), according to \cite[Theorem~2]{MarsSaka17b}.
We now  can state the main result of this section.

\begin{theorem}
\label{t.per-sig}%
If a \pce language~$L$ has an eventually periodic signature with 
rhythm of parameter~$(q,p)$, then~$\CPL$ exists and
\begin{equation}
    \CPL=\frac{p}{p-q}
    \eqpnt
    \notag
\end{equation}
\end{theorem}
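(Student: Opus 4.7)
The plan is to apply Corollary~\ref{c.sum-cp}, which decomposes $\scp{N} = \sum_{i=0}^{\ell-1}\vL{i} + \jsCard{\LB{\Succ{u}}}$ for $u=\repr{N}$ of length $\ell$, and then to estimate both summands as $N\to\infty$. Since carry propagation depends only on the shape of $\TL$ and hence only on the signature $\sigs$, I first reduce to the purely periodic case $\sigs = \rhth^\omega$ with $\rhth=(r_0,\ldots,r_{\iqmu})$ and $\sum_i r_i = p$; the eventually periodic case $\rhth[t]\rhth^\omega$ follows by the same argument with routine modifications, since the prefix $\rhth[t]$ perturbs all estimates below by only $O(1)$ bounded terms. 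I set $\alpha = p/q$ throughout.

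The first step is to show that the local growth rate satisfies $\igrr = \alpha$. From the i-tree recurrence $\vL{\ell+1} = \sum_{i=0}^{\vL{\ell}-1} s_i$, writing $\vL{\ell}=kq+m$ with $0\le m<q$ yields $\vL{\ell+1} = kp + \sum_{i=0}^{m-1} r_i$; since $k\to\infty$ with $\vL{\ell}$, the ratio $\vL{\ell+1}/\vL{\ell}$ tends to $p/q = \alpha$, and Lemma~\ref{l.cal-cla} promotes this to $\uL{\ell+1}/\uL{\ell}\to\alpha$, so $\igrr=\alpha$. Proposition~\ref{p.car-pro-fil} then already yields the length-filtered value $\FCPL = \alpha/(\alpha-1) = p/(p-q)$, the candidate value of $\CPL$.

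The heart of the proof is to upgrade $\FCPL$ to $\CPL$ via a uniform estimate: for $u$ the $(M+1)$-th word of length $\ell$ in the radix order, I claim $\jsCard{\LB{u}} = M\alpha/(\alpha-1) + O(\ell)$, with the implicit constant independent of both $M$ and $\ell$. Decomposing $\LB{u} = \bigsqcup_{k=0}^{\ell-1}(\LB{u}\cap A^{\ell-k})$, each summand equals the position at depth $\ell-k$ of the ancestor of $u$ at that depth. Periodicity of $\sigs$ implies that the map sending a node's position at depth $d$ to its parent's position at depth $d-1$ is, up to an additive error bounded by $q$, multiplication by $q/p = 1/\alpha$; iterating $k$ times gives $\jsCard{\LB{u}\cap A^{\ell-k}} = M\alpha^{-k} + O(1)$, and summing the geometric series yields the claimed estimate.

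Combining the pieces, write $N = \vL{\ell-1} + M$ with $0\le M<\uL{\ell}$. Lemma~\ref{l.cal-cla} gives $\sum_{i=0}^{\ell-1}\vL{i} = \vL{\ell-1}\cdot\alpha/(\alpha-1) + o(\vL{\ell-1})$, and the uniform estimate gives $\jsCard{\LB{\Succ{u}}} = M\alpha/(\alpha-1) + O(\ell)$. Adding these yields $\scp{N} = N\cdot\alpha/(\alpha-1) + O(\ell) + o(\vL{\ell-1})$; since $N\ge\vL{\ell-1}$ grows exponentially in $\ell$, dividing by $N$ produces $\scp{N}/N \to \alpha/(\alpha-1) = p/(p-q)$, establishing both the existence of $\CPL$ and the claimed value. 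The main obstacle is the uniform counting estimate in the third step: one must exploit the rigidity of the periodic signature to show that the parent-position operation behaves like integer division by $\alpha$ with error bounded uniformly across all depths, and verify that the error accumulated across the $\ell$ levels is absorbed in $O(\ell)$, which is negligible against the exponential growth of $N$.
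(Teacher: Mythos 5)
Your proposal is correct and follows essentially the same route as the paper's proof: both decompose $\scp{N}$ via Corollary~\ref{c.sum-cp}, handle the $\sum_{i<\ell}\vL{i}$ term with Lemma~\ref{l.cal-cla}, and estimate $\jsCard{\LB{\Succ{u}}}$ level by level using the fact that, by periodicity, passing to the parent's position multiplies by $q/p$ up to an additive error bounded by $q$, whose geometric accumulation stays $O(1)$ per level and $O(\ell)$ in total. Your uniform estimate $\jsCard{\LB{u}}=M\frac{p}{p-q}+O(\ell)$ is exactly the paper's pair of bounds, and your treatment of the transient part of the signature matches the paper's restriction to levels beyond $\ell_0$.
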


\begin{proof}
Let $\msp\sigs = \rhth[t]\xmd\rhth^{\omega}\msp$ be the signature 
of~$\TL$.
Let $\msp\rhth[t]=t_0 t_1\cdots t_{k}\msp$ and
$\msp  \sum_{\smash{i=0}}^{\smash{k}} t_i = P\msp$.
In the following, we always choose~$\ell$ larger than~$\ell_{0}$ such 
that~$\vL{\ell_{0}-1}>P$ and~$N$ larger than~$\vL{\ell_{0}}$, that is, 
we consider nodes and levels of~$\TL$ where the signature is in its 
periodic part.

We first observe that at any given level~$\ell$, the~$q$ leftmost
nodes have~$p$ children at level~$\ell+1$ and moreover for any~$k$
such that~$k\xmd q\leq\uLl$, the~$k\xmd q$ leftmost nodes have~$k\xmd
p$ children at level~$\ell+1$.
\emph{Conversely}, the~$p$ leftmost nodes at level~$\ell$ are the 
children of the~$q$ leftmost nodes at level~$\ell-1$ and for any~$k$ 
such that~$k\xmd p\leq\uLl$, 
the~$k\xmd p$ leftmost nodes  at level~$\ell$ are the children of 
the~$k\xmd q$ leftmost nodes at level~$\ell-1$.

The first observation implies that for every~$\ell$ (greater 
than~$\ell_{0}$), we have:
\begin{alignat}{2}
p\xmd\Inte{\frac{\uLl}{q}}
   &\leq \xmd&\uL{\ell+1}&\leq 
       p\xmd\Inte{\frac{\uLl}{q}} + (p-1)
       \EqVrgInt          
       \notag\\[.5ex]
\text{hence}\ee\e   
\frac{p}{q}\xmd\uLl - p
   &\leq&\uL{\ell+1}&\leq 
       \frac{p}{q}\xmd\uLl + (p-1)
       \eqpnt
       \eee
       \notag
\end{alignat}
And since
$\msp \lim_{\ell\to\infty}\uLl = +\infty\msp$, 
it follows that
\begin{equation}
    \lim_{\ell\to\infty}\frac{\uL{\ell+1}}{\uLl} = \frac{p}{q}
    \eqpnt
\label{q.sig-per-0}
\end{equation}

We then take the same notation as in the proof of \propo{int-bas}:
let~$N$ in~$\N$, $u=\repr{N}$, and~$\ell=|u|$; 
then $\msp\vL{\ell-1}\leq N<\vLl\msp$. 
As in \corol{sum-cp}:
\begin{equation}
\scp{N} = 
\sum_{i=0}^{\ell-1} \vL{i} + \jsCard{\LB{\Succ{u}}}
\eqpnt
\label{q.sig-per-1}
\end{equation}
\noindent 
From \equnm{sig-per-0} and \lemme{cal-cla}, it follows that
$\msp\lim_{\ell\to\infty}\frac{\vL{\ell+1}}{\vLl}=\frac{p}{q}\msp$
and then:
\begin{gather}
\lim_{\ell\to\infty}\frac{1}{\vL{\ell-1}}\sum_{i=0}^{\ell-1}\vL{i}
=\frac{p}{p-q}
\notag\\
\intertext{which can be written as:}
\sum_{i=0}^{\ell-1}\vL{i} =
\vL{\ell-1}\left(\frac{p}{p-q}+\epsilon(\ell)\right)
\ee\text{with }
\lim_{\ell\to\infty}\xmd\epsilon(\ell)=0
\eqpnt 
\label{q.sig-per-2}
\end{gather}

Let~$v=\Succ{u}$. 
The evaluation of~$\jsCard{\LB{v}}$ goes as follows:
the case where~$N=\vLl-1$ and~$|v|=\ell+1$
(which corresponds to $\scp{N} = \sum_{i=0}^{\ell} \vL{i}$)
is excluded; we write 
$N=\vL{\ell-1} + (M-1)$
with
$1\leq M<\uL{\ell}$ and: 
\begin{equation}
\jsCard{\LB{v}\cap A^{\ell}}=M
    \eqpnt
\notag
\end{equation}
The second observation above implies then the evaluation at 
level~$\ell-1$:
\begin{alignat}{2}
\frac{q}{p}\xmd M - q <
q\xmd\Inte{\frac{M}{p}}
&\leq\xmd&\jsCard{\LB{v}\cap A^{\ell-1}}&\leq 
q\xmd\Inte{\frac{M}{p}} + (q-1) < \frac{q}{p}\xmd M +q
\eqvrg
\notag\\
\intertext{at level~$\ell-2$:}
\left(\frac{q}{p}\right)^{\!2}\!M - q\xmd\frac{q}{p} - q
&<&\jsCard{\LB{v}\cap A^{\ell-2}}&<
\left(\frac{q}{p}\right)^{\!2}\!M + q\xmd\frac{q}{p} + q
\eqvrg
\notag\\
\intertext{and at level~$\ell-k$:}
\left(\frac{q}{p}\right)^{\!k}\!M 
   - q \sum_{i=1}^{k}\left(\frac{q}{p}\right)^{\!k-i}
&<&\jsCard{\LB{v}\cap A^{\ell-k}}&<
\left(\frac{q}{p}\right)^{\!k}\!M 
   + q \sum_{i=1}^{k}\left(\frac{q}{p}\right)^{\!k-i}
\eqpnt
\notag
\end{alignat}
Let us write~$B=\bigcup_{j=\ell_{0}}^{\ell}A^{j}$ 
and~$h=\ell-\ell_{0}$.
The summation of the above inequalities from~$k=0$ 
to~$k=h$ yields the following lower and upper bounds (after some 
simplifications):
%
%
\begin{equation}
\frac{p}{p-q}\left[
   M\xmd\left(1-\left(\frac{q}{p}\right)^{\!h+1}\right)- q\xmd h 
             \right] <
\jsCard{\LB{v}\cap B} <
\frac{p}{p-q}\left[M +q\xmd h \right] 
\eqpnt
\notag
\end{equation}
%
Let~$Q=\sum_{i=0}^{\ell_{0}-1} \vL{i}$. 
We bound~$\jsCard{\LB{v}\cap A^{<\ell_{0}}}$ from below by~$0$ and 
from above by~$Q$
and we get then:
\begin{equation}
\frac{p}{p-q}\left[
   M\xmd\left(1-\left(\frac{q}{p}\right)^{\!h+1}\right)- q\xmd h 
             \right] <
\jsCard{\LB{v}} <
\frac{p}{p-q}\left[M +q\xmd h \right] + Q
\eqpnt
\notag
\end{equation}
%
As in the proof of \propo{int-bas}, these inequalities together  
with~\equnm{sig-per-2} yields
\begin{multline}
(N+1)\frac{p}{p-q}+\bfvp{\ell-1}\xmd\epsilon(\ell)
-M\frac{q}{p-q}\left(\frac{q}{p}\right)^{\!h} 
-q\xmd h\frac{p}{p-q} <\\
\scp[p]{N}<
(N+1)\frac{p}{p-q}+\bfvp{\ell-1}\xmd\epsilon(\ell)
+ Q +q\xmd h\frac{p}{p-q}
\EqPnt
\notag
\end{multline}
If we divide by~$N$, both the lower and upper bounds tend 
to~$\frac{p}{p-q}$ when~$N$ tends to infinity, 
hence~$\frac{1}{N}\scp[p]{N}$ has a limit, and this limit 
is~$\frac{p}{p-q}$.
\end{proof}



\section{The carry propagation of rational languages:
        \protect\\ \eee \eee \e 
	 an algebraic point of view}
\label{s.car-pro-rat}%

Even in the case of rational (\pce) languages, the existence of the
local growth is not sufficient to insure the existence of the carry
propagation, but we could say it is `almost' sufficient.
Recall that if~$L$ is a language of~$\Ae$ and~$w$ a word of~$\Ae$, 
the \emph{quotient} of~$L$ by~$w$ is the language
$\msp w^{-1}L=\Defi{v\in\Ae}{w\xmd v\in L}\msp$
and that a language is rational if and only if it has a finite 
number of distinct quotients (see for instance~\cite{Saka09}, 
\cite{Rigo14} or any book on formal language theory).
The aim of this section is the proof of the following result.

\begin{theorem}
\label{t.cp-rat-lan}
Let~$L$ be a rational \pce language with local growth rate~$\igrr$.
If the local growth rate of every quotient 
of~$L$ exists, then the carry propagation~$\CAPR$ exists and is 
equal to $\displaystyle{\frac{\igrr}{\igrr-1}}$.
\end{theorem}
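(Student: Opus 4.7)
The plan is to start from the exact formula of \corol{sum-cp}: for any $u\in L$ of length~$\ell$, setting $N=\val{u}$, one has
\[
\scp{N} \;=\; \sum_{i=0}^{\ell-1}\vL{i} \;+\; \jsCard{\LB{\Succ{u}}}.
\]
Writing $\gamma = \igrr$ (with $\gamma > 1$, since the very form of the conclusion excludes the polynomial-growth regime), the goal is to establish $\scp{N}/N\to\gamma/(\gamma-1)$. The first summand is the easy part: \lemme{cal-cla} applied first to $\uL{\ell}$ propagates the existence of~$\gamma$ to $\vL{\ell+1}/\vL{\ell}\to\gamma$, and a second application gives $\sum_{i=0}^{\ell-1}\vL{i}\sim\frac{\gamma}{\gamma-1}\vL{\ell-1}$. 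Since $\vL{\ell-1}\le N<\vL{\ell}$ and $\vL{\ell}/\vL{\ell-1}\to\gamma$, the denominator~$N$ is comparable to $\vL{\ell-1}$ up to a factor bounded away from~$0$ and~$\infty$, so the whole problem reduces to showing that $\jsCard{\LB{\Succ{u}}}$ captures the \emph{right} fraction of~$N$, uniformly in the position of~$N$ within the interval $[\vL{\ell-1},\vL{\ell})$.

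Next, I would decompose the left bank along the path $v := \Succ{u} = a_1 a_2\cdots a_\ell$ by grouping the elements of $\LB{v}$ according to the first position~$i$ at which they branch away from~$v$. For each position~$i$ and each letter $b<a_{i+1}$ with $a_1\cdots a_i b\in L$, the corresponding contribution is precisely the truncation at depth $\ell-i-1$ of the language tree of the quotient $M_{i,b} := (a_1\cdots a_i b)^{-1}L$, which gives
\[
\jsCard{\LB{v}} \;=\; \sum_{i=0}^{\ell-1}\sum_{b<a_{i+1}}\bfv[M_{i,b}]{\ell-i-1},
\]
together with the parallel decomposition $N - \vL{\ell-1} = \sum_{i,b}\bfu[M_{i,b}]{\ell-i-1}$ for the portion of the radix index lying at length~$\ell$. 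Because $L$ is rational, the family $\{M_{i,b}\}$ ranges over only the finitely many distinct quotients of~$L$, each of which carries a local growth rate by hypothesis.

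The crux of the argument will be a Perron--Frobenius-type statement for rational power series with non-negative coefficients, to be extracted from the machinery on dominating and almost dominating eigenvalues developed earlier in this section: under the standing hypotheses, for each quotient~$M$ of~$L$, either $\igrr[M]=\gamma$ and the ratio $\bfu[M]{k}/\uL{k}$ converges to a positive limit~$c_M$, or $\igrr[M]<\gamma$ and $\bfu[M]{k}=\smallO{\gamma^k}$. The individual existence of each $\igrr[M]$ is precisely what kills oscillations between conjugate spectral roots on the dominant circle, while rationality supplies the finite non-negative matrix that makes the Perron--Frobenius argument go through. Substituting the asymptotics $\bfu[M]{k}\sim c_M\gamma^k$ and $\bfv[M]{k}\sim c_M\frac{\gamma}{\gamma-1}\gamma^k$ (with $c_M=0$ in the subdominant case) termwise into the two decompositions above, the common factor $c_M\gamma^{\ell-i-1}$ cancels in the ratio $\jsCard{\LB{v}}/(N-\vL{\ell-1})$, which therefore tends to $\gamma/(\gamma-1)$.

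Combining the two estimates produces $\scp{N}/N\to\gamma/(\gamma-1)$, as required. The main obstacle is undoubtedly the Perron--Frobenius lemma on the quotients: it is precisely this step that is unavailable from the existence of~$\gamma$ alone, as the non-rational \examp{un-bal} already witnesses that $\igrr$ can exist while $\CPL$ does not. The remaining difficulties -- performing the termwise substitutions uniformly in $(i,b)$ when the exponent $\ell-i-1$ sweeps from~$0$ to $\ell-1$, and handling $N$ as it runs through the whole interval $[\vL{\ell-1},\vL{\ell})$ rather than only through the filtered values -- are essentially technical bookkeeping.
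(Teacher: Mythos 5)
Your proposal follows essentially the same route as the paper: there the theorem is derived as a corollary of \theor{a-dev-cp}, whose proof uses exactly your decomposition of $\LB{\Succ{u}}$ by branch points into truncated quotient trees (indexed by the states of a deterministic automaton recognizing~$L$), and whose key spectral input is your ``Perron--Frobenius lemma'', obtained from Berstel's theorem on $\R_{+}$-rational series (\theor{pos-rat-eig}) via the notion of almost dominating eigenvalue and \theor{lgr-a-dev}. Two small corrections to your sketch: when $\igrr[M]=\gamma$ the limit of $\bfu[M]{k}/\uL{k}$ exists but may be~$0$ (the dominant eigenvalue of a quotient can occur with smaller multiplicity than in~$L$), and the concluding step must be phrased as $\frac{1}{N}\bigl|\jsCard{\LB{v}}-\frac{\gamma}{\gamma-1}\bigl(N-\vL{\ell-1}\bigr)\bigr|\to 0$ rather than as convergence of the ratio $\jsCard{\LB{v}}/(N-\vL{\ell-1})$, which degenerates when $N$ is close to $\vL{\ell-1}$ --- this is the ``bookkeeping'' you defer, and the paper carries it out by bounding $\sum_{j}\bigl|\bfv[q]{j}-\frac{\gamma}{\gamma-1}\bfu[q]{j}\bigr|$ uniformly over the finitely many states~$q$, splitting into the cases where the quotient's modulus is smaller than, or equal to,~$\gamma$.
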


We prove indeed the existence of carry propagation for rational 
languages under somewhat more general hypotheses, the statement of 
which is more technical and requires some developments 
(\theor{a-dev-cp}). 
In any case, rationality \emph{does not} imply the existence of the
local growth rate, as seen with the example below.

\begin{example}
\label{e.sqr-4}
Let
$\msp K_{1}=(\{a\}\{a,b,c,d\})^*\{a,\varepsilon\}\msp$
be the rational \pce language of~$\{a,b,c,d\}^{*}$
accepted by the automaton~$\Ac_{1}$ in \figur{sqr-4a}.

We have: $\wlg[K_{1}]{0}=1$,
$\wlg[K_{1}]{2\ell+1}=\wlg[K_{1}]{2\ell}$ and
$\wlg[K_{1}]{2\ell+2}=4\xmd\wlg[K_{1}]{2\ell+1}$, hence~$\igrr[K_{1}]$
does not exist.
\end{example}

\begin{figure}[h]
\setlength{\lga}{4cm}
\centering
\VCDraw{%
\begin{VCPicture}{(0,-1)(\lga,1)}
\SmallState
\State{(0,0)}{A}\State{(\lga,0)}{B}
\Initial[w]{A}\Final[s]{A}\Final[s]{B}
\ArcL{A}{B}{a}
\ArcL[.48]{B}{A}{a,b,c,d}
\end{VCPicture}}%
\caption{The minimal automaton~$\Ac_{1}$ of 
$K_{1}=(\{a\}\{a,b,c,d\})^*\{a,\epsilon\}$.}
\label{f.sqr-4a}
\end{figure}

The proof of \theor{cp-rat-lan} and of other results of the same kind 
goes in two main steps.
We first prove with \theor{lgr-a-dev} in \secti{lgr-dom} that the
local growth rate of a rational language exists if and only if the
language has an \emph{almost dominating eigenvalue}, as defined in
\secti{gen-fun-dom}.
In \secti{dom-cp}, we prove that if~$L$ has an almost dominating eigenvalue,
then the carry propagation of~$L$ exists under some additional
hypotheses on the eigenvalues of the quotients of~$L$
(\theor{a-dev-cp}). \theor{cp-rat-lan} is just a corollary of \theor{a-dev-cp}.

In \secti{beta-num-gen} we will see that the hypothesis of
\theor{cp-rat-lan} are fulfilled in the case of the so-called Parry
beta-numeration (\corol{bet-num}).

\subsection{Generating functions and dominating eigenvalues} 
\label{s.gen-fun-dom}

Let~$L$ be a language of~$\Ae$.
The \emph{generating function} of~$L$, $\genz$, is the (formal power)
series in one indeterminate whose $\ell$-th coefficient is the number
of words of~$L$ of length~$\ell$, that is, with our notation:
\begin{equation}
    \genz = \sum_{\ell=0}^{\infty} \uLl\xmd z^{\ell}
\eqpnt
\notag
\end{equation}

Let~$L$ be a rational language of~$\Ae$ 
and~$\Ac=\auta$ a 
deterministic automaton of `dimension'~$Q$  
that accepts~$L$: $L=\CompAuto{\Ac}$.
We identify~$I$ and~$T$, subsets of~$Q$, with their characteristic 
functions in~$\N$, and we write them as vectors of 
dimension~$Q$, respectively row- and column-vectors:
\begin{equation}
    \fa p\in Q\quantsp
    I_{p} = 
\begin{cases}
        \; 1\EqVrg & \text{if\ $\msp p\msp$ is initial}; \\
        \; 0\EqVrg & \text{otherwise};
\end{cases}
\eee
    T_{p} = 
\begin{cases}
        \; 1\EqVrg & \text{if\ $\msp p\msp$ is final}; \\
        \; 0\EqVrg & \text{otherwise}.
\end{cases}
\notag
\end{equation}

The \emph{adjacency matrix}~$\adj$ of~$\Ac$ is the $Q\x Q$-matrix the
$(p,q)$-entry of which is the \emph{number} of transitions in~$\Ac$
that go from state~$p$ to state~$q$ (that is, the entries of~$\adj$
are in~$\N$):
\begin{equation}
    \fa p,q\in Q\quantsp
    (\adj)_{p,q} = \jsCard{\Defi{a\in A}{(p,a,q)\in E}}
\eqpnt
\notag
\end{equation}
Since~$\Ac$ is deterministic (the hypothesis `unambiguous' would
indeed be sufficient), the adjacency matrix allows the computation 
of~$\uLl$, the number of words of~$L$ of length~$\ell$, as:
\begin{equation}
    \fa \ell\in\N\quantsp
    \uLl = I\matmul(\adj)^{\ell}\matmul T
\eqpnt
\eee
\notag
\end{equation}
That is, $\genz$ is an \emph{$\N$-rational series} since the above
equation precisely states that it is realized by the representation
$\ImuT$, with~$\mu(z)=\adj$.
The $\N$-rationality of~$\genz$ implies a number of properties which
eventually allow us to establish \theor{a-dev-cp} and then 
\theor{cp-rat-lan}. 

The semiring~$\N$ is embedded in the \emph{field}~$\Q$ (and, further 
on, in the \emph{algebraically closed field}~$\C$) and in the 
remaining of the subsection, we essentially derive an expression of 
the coefficients~$\uLl$ from the fact that~$\genz$ is a 
$\Q$-rational series (or even a $\C$-rational series).
The very special properties of rational series with non-negative
coefficients come into play in the next subsection.
We rely on the treatise~\cite{BersReut11} of
Berstel--Reutenauer (Sec.~6.1,~6.2,~8.1, and~8.3) for this exposition.

The Cayley--Hamilton Theorem implies that the sequence~$\sequ{\uLl}$ 
satisfies the \emph{linear recurrence relation} defined by the 
\emph{characteristic polynomial}~$\pol[\Ac]$ of~$\adj$, the zeroes 
of which are the \emph{eigenvalues} of~$\adj$.
The sequence~$\sequ{\uLl}$ satisfies indeed a \emph{shortest} linear 
recurrence relation associated with a polynomial~$\pol$, the 
\emph{minimal polynomial} of~$\genz$, which is a divisor 
of~$\pol[\Ac]$.
The zeroes of~$\pol$ are called \emph{the eigenvalues 
of}~$\genz$ and \emph{of~$L$}.
The \emph{multiplicities} of these eigenvalues are those of these 
zeroes.

\begin{definition}
\label{d.lan-dom-eig}
We call the maximum of the moduli of the eigenvalues of a rational 
language~$L$ the \emph{modulus} of~$L$.
It is the multiplicative inverse of the radius of convergence of the 
series~$\genz$. 

A rational language $L$ is said to have a \emph{dominating
eigenvalue}, or, for short, \emph{to be \dev}, if there is, among the
eigenvalues of~$L$, a unique eigenvalue of maximal modulus, which is
called \emph{the} dominating eigenvalue of~$L$.
\end{definition}

With the next two examples, we stress the difference between
the eigenvalues of the adjacency matrix of an automaton~$\Ac$ that
recognizes~$L$ and the eigenvalues of $L$.

\begin{example}[\examp{sqr-4} continued]
\label{e.dom-mod-2}%
The adjacency matrix of~$\Ac_{1}$
shown in \figur{sqr-4a} is 
$\msp\adj[\Ac_{1}]=\begin{pmatrix} 0 & 1 \\ 4 & 0\end{pmatrix}\msp$,
its characteristic polynomial is 
$\msp\pol[\Ac_{1}]=X^{2}-4\msp$,
the zeroes of which are~$2$ and~$-2$.
This polynomial is also the minimal polynomial of the linear
recurrence satisfied by the coefficients of~$\gen[K_{1}]{z}$:
\begin{gather}
  \wlg[K_{1}]{0}=1\EqVrgInt
  \wlg[K_{1}]{1}=1\EqVrgInt
  \wlg[K_{1}]{\ell+2}=4\xmd\wlg[K_{1}]{\ell}
\eqvrg
\notag\\
\text{hence}
\ee\eee
\fa\ell\geq 0\quantsp  
  \wlg[K_{1}]{\ell}= \frac{3}{4}\xmd2^{\ell} + \frac{1}{4}\xmd(-2)^{\ell}
\eqvrg
\eee\eee\eee
\notag
\end{gather}
and~$K_{1}$ is thus not~\dev.
\end{example}

\begin{example}
\label{e.dom-eig-2}
The adjacency matrix of the automaton~$\Ac_{2}$ in \figur{sqr-4} 
is\linebreak 
$\msp\adj[\Ac_{2}]=\begin{pmatrix} 0 & 2 \\ 2 & 0 \end{pmatrix}\msp$,
its characteristic polynomial is
$\msp \pol[\Ac_{2}]=X^{2}-4\msp$ as above.
But in this case, the \emph{minimal polynomial of the linear 
recurrence} 
satisfied by the coefficients of~$\gen[K_{2}]{z}$,
$\wlg[K_{2}]{\ell}= 2^{\ell}$,
is~$\pol[K_{2}]=X-2$, with~$2$ as a unique zero and~$K_{2}$ is~\dev. 
\end{example}

\begin{figure}[h]
\setlength{\lga}{4cm}
\centering
\VCDraw{%
\begin{VCPicture}{(0,-1)(\lga,1)}
\SmallState
\State{(0,0)}{A}\State{(\lga,0)}{B}
\Initial[w]{A}\Final[s]{A}\Final[s]{B}
\ArcL{A}{B}{a,b}
\ArcL{B}{A}{c,d}
\end{VCPicture}}%
\caption{The minimal automaton~$\Ac_{2}$ of 
$K_{2}=(\{a,b\}\{c,d\})^*\{a,b,\epsilon\}$.} 
\label{f.sqr-4}
\end{figure}

As stated in~\cite{BersReut11}, the rational function~$\genz$ may be 
written, in a unique way,~as: 
\begin{equation}
    \genz = T(z) + \frac{R(z)}{S(z)}
    \notag
\end{equation}
where~$T(z)$, $R(z)$ and~$S(z)$ are polynomials in~$\QPz$, 
$\deg R < \deg S$ and~$S(0)\not=0$.
It can be shown that~$\pol$ is the \emph{reciprocal polynomial}
of~$S$: $\msp\pol(z)=S(\frac{1}{z})\xmd z^{\deg S}\msp$.
It follows that if~$\lambda_{1}$, 
$\lambda_{2}$,\ldots,~$\lambda_{t}$ are the zeroes of~$\pol$, the 
coefficients~$\uLl$ of~$\genz$ can be written as:
\begin{equation}
    \fa\ell\in\N\quantsp
    \uLl = \sum_{j=1}^{t} \lambda_{j}^{\ell}\xmd P_{j}(\ell)
    \eqvrg
    \eee
    \label{q.lin-rec}
\end{equation}
where every~$P_{j}$ is a polynomial (which depends on~$L$ even though
it does not appear in the writing) 
\emph{whose degree is equal to the multiplicity of the 
zero~$\lambda_{j}$ in~$\pol$ minus~$1$}, and is
determined by the first values of the sequence~$\msp\sequ{\uLl}\msp$.

\subsection{From local growth rate to dominating eigenvalue}
\label{s.lgr-dom}%

The properties of rational series with positive coefficients allow us
to characterize the generating functions of rational languages with
local growth rate.
Let us first recall the theorem due to Berstel on such series.

\begin{theorem}[Theorem~8.1.1 and Lemma~8.1.2 in \cite{BersReut11}]%
\label{t.pos-rat-eig}%
~

\noindent 
Let~$f(z)$ be an $\R_{+}$-rational function which is not a polynomial 
and~$\lambda$ the maximum of the moduli of its eigenvalues.
Then:

\tha $\lambda$ is an eigenvalue of~$f(z)$ (hence an eigenvalue 
in~$\R_{+}$).

\thb Every eigenvalue
of~$f(z)$ of modulus~$\lambda$ is of the
form~$\msp\lambda\xmd\eulnum^{i\xmd\theta}\msp$
where~$\eulnum^{i\xmd\theta}$ is a root of unity.

\thc The multiplicity of any eigenvalue of modulus~$\lambda$ is at 
most that of~$\lambda$.
\end{theorem}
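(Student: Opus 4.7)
The plan is to split the three assertions along the line of their standard proofs: (a) is a complex-analytic fact about the singularities of generating functions with non-negative coefficients, while (b) and (c) are essentially the Perron--Frobenius theorem for non-negative matrices, transported to the series $f(z)$ through a non-negative linear representation.

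For (a), I would start from the fact that the radius of convergence of the power series $\sum_{\ell} u_{\ell}\xmd z^{\ell}$ equals $\rho = 1/\lambda$, since the poles of the rational function $f(z)$ are the reciprocals of its eigenvalues and $\lambda$ is the largest modulus among them. Because every coefficient $u_{\ell}$ is non-negative, the Pringsheim--Vivanti theorem guarantees that the point $z=\rho$ on the positive real axis must itself be a singularity of $f$. As $f$ is rational, that singularity is a pole, hence $1/\rho=\lambda$ is an eigenvalue of $f$; since $\rho>0$, this eigenvalue is real positive, namely $\lambda$ itself.

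For (b) and (c), I would invoke the fact that an $\R_+$-rational function admits a linear representation $(I,M,T)$ with $I$, $T$ and $M$ entrywise in $\R_+$, so that $u_{\ell}=I\matmul M^{\ell}\matmul T$. The minimal polynomial of the sequence $(u_{\ell})$, which governs the eigenvalues of $f$ together with their multiplicities, divides the characteristic polynomial of $M$. Perron--Frobenius theory for non-negative matrices then tells us that the spectral radius $\lambda_{M}$ of $M$ is a non-negative real eigenvalue, that every eigenvalue of $M$ of modulus $\lambda_{M}$ has the form $\lambda_{M}\xmd \eulnum^{i\xmd\theta}$ with $\eulnum^{i\xmd\theta}$ a root of unity (the cyclic structure of each irreducible block of $M$), and that in the characteristic polynomial of $M$ the multiplicity of any such eigenvalue is bounded by that of $\lambda_{M}$ itself (after summing over blocks). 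Applying this to $M$ and then chasing through the divisibility of minimal polynomials yields (b) and (c) for~$f$.

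The main obstacle will be the identification $\lambda=\lambda_{M}$ and, more subtly, the transfer of the multiplicity bound from the characteristic polynomial of $M$ to the minimal polynomial of $f$: a generic non-negative representation can be larger than needed, with spurious eigenvalues above $\lambda$, and multiplicities attached to a root of the characteristic polynomial may overshoot those actually realized in the linear recurrence on $(u_{\ell})$. Two complementary fixes are available. One can work with a minimal $\R_+$-representation (or, failing that, reduce to the block that carries the actual growth of $u_{\ell}$, using $\liminf u_{\ell}^{1/\ell}=\lambda$ from part~(a)); or one can use the Hadamard sections $f^{(j)}_{k}(z)=\sum_{\ell} u_{k\ell+j}\xmd z^{\ell}$, each again $\R_+$-rational and whose eigenvalues are $k$-th powers of the eigenvalues of $f$, to show that a non-root-of-unity $\eulnum^{i\xmd\theta}$ would produce infinitely many distinct top-modulus eigenvalues across varying $k$, contradicting the finite dimension of any representation of $f$. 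This dynamical variant is arguably the cleanest route to~(b), and an analogous counting across sections delivers the multiplicity comparison in~(c).
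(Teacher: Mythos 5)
The paper does not prove this theorem; it cites it directly from Berstel--Reutenauer~\cite{BersReut11}, so there is no in-paper argument to compare against, and your proposal has to be assessed on its own merits. Part~(a) via Pringsheim is correct and standard. Part~(b) goes through as you outline: trim a non-negative linear representation $(I,M,T)$ of $f$ so that, by positivity of the entries (no cancellation can hide a dominant strongly connected component lying on an $I$-to-$T$ path), the spectral radius of $M$ equals~$\lambda$; Perron--Frobenius applied to the Frobenius normal form of~$M$, together with the fact that every eigenvalue of~$f$ is an eigenvalue of~$M$, then gives~(b).

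The genuine gap is in~(c), and it is exactly the transfer you flag. Divisibility of the minimal polynomial of~$f$ into the characteristic polynomial of~$M$ only yields $\mathrm{mult}(\lambda e^{i\theta},f)\le\mathrm{mult}(\lambda e^{i\theta},M)\le\mathrm{mult}(\lambda,M)$, which bounds the wrong quantity: it gives no lower bound on~$\mathrm{mult}(\lambda,f)$. Neither fix you sketch closes this. A minimal $\R_+$-representation typically has strictly larger dimension than the Hankel rank, so its characteristic polynomial still properly contains the minimal polynomial of~$f$ and may carry surplus multiplicity at~$\lambda$; and the Hadamard-section remark as phrased produces no contradiction, since each section $f_k^{(j)}$ is a different series with its own finite representation. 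The step that works uses~(b) together with the non-negativity of the coefficients directly. Let $d$ be the largest degree occurring among the polynomial coefficients attached, in the partial-fraction expansion of~$f$, to the eigenvalues of modulus~$\lambda$, so that $u_\ell=\lambda^\ell\ell^d\bigl(\delta_1+\sum_{j\ge2}\delta_j e^{i\ell\theta_j}\bigr)+o(\lambda^\ell\ell^d)$ with $\delta_1$ attached to~$\lambda$ and $\delta_j=0$ whenever the $j$-th polynomial has degree $<d$. By~(b) the bracket is periodic of some period~$r$ and its average over a period is~$\delta_1$; since $u_\ell\ge0$, each of its $r$ values must be non-negative; if $\delta_1=0$, a non-negative periodic sequence of zero average is identically zero, which would force every $\delta_j=0$ and contradict the maximality of~$d$. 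Hence $\delta_1\ne0$, so $\lambda$ attains the top degree~$d$, which is precisely~(c).
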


We express the consequences of this result (and of~\equat{lin-rec}) 
in the following way.
Let~$L$ be a rational language and~$\genz$ its generating function, 
an $\R_{+}$-rational function.
Let~$\lambda=\lambda_{1}$, $\lambda_{2}$,\ldots, $\lambda_{k}$ be the
eigenvalues of maximal modulus~$\lambda$ of~$\genz$ and~$d$ the degree
of the polynomial~$P_{1}$ in~\equnm{lin-rec}.
For $j=\nobreak 2,\ldots, k$, we write
$\msp\displaystyle{\lambda_{j}=
   \lambda\xmd\eulnum^{i\xmd\theta_{j}}}\msp$;
$\displaystyle{\eulnum^{i\xmd\theta_{j}}}$ is a root of unity, 
hence~$\theta_{j}= 2\xmd\pi/h_{j}$ where~$h_{j}$ is an integer and 
let~$r$ be the least common multiple of all the~$h_{j}$.

There exist~$k$ (possibly complex) numbers~$\delta_{1}$,
$\delta_{2}$,\ldots,~$\delta_{k}$, with~$\delta_{1}$ in~$\R$ and
different from~$0$, such that~\equnm{lin-rec} can be given the
following asymptotic form:
\begin{equation}
\fa\ell \text{ large enough}\quantsp
\uLl =
\lambda^{\ell}\xmd\ell^{d}\xmd 
\left(\delta_1 +
   \sum_{j=2}^k \delta_j\xmd\eulnum^{i\xmd\ell\xmd\theta_j}\right) + 
   \petito{\lambda^{\ell}\xmd\ell^{d}}
\eqpnt
\label{q.asy-eva}
\end{equation}
(It is understood that~$\delta_j$ is not zero if the 
polynomial~$P_{j}$ in~\equnm{lin-rec} is of degree~$d$, it is equal 
to~$0$ if this polynomial is of degree less than~$d$ --- and by 
\theor{pos-rat-eig}~(c) no such polynomial has degree greater 
than~$d$.)

One can say that the description of~$\uLl$ given in~\equnm{lin-rec} 
is \emph{ordered by eigenvalues} whereas  the description 
in~\equnm{asy-eva} is \emph{ordered by moduli} of eigenvalues. 
Since for every $j=\nobreak 2,\ldots, k$ and every~$p$ in~$\N$ we have
$\displaystyle{\eulnum^{i\xmd{p\xmd r}\theta_{j}}=1}$, it follows that
\begin{equation}
\lim_{p\rightarrow\infty}\xmd
      \frac{\uL{p\xmd r}}{\lambda^{p\xmd r}\xmd(p\xmd r)^{d}}\xmd 
        = \sum_{j=1}^k \delta_j
\eqpnt
\notag
\end{equation}
%

\begin{definition}
\label{d.a-dev}%
Let~$f(z)$ be an $\R_{+}$-rational function which is not a polynomial 
and~$\lambda$ the maximum of the moduli of its eigenvalues.
We say that~$f(z)$ has an \emph{almost dominating eigenvalue}, or 
is~\adev, if the \emph{multiplicity} of any non-real eigenvalue of 
modulus~$\lambda$ is \emph{strictly less} than that of the 
eigenvalue~$\lambda$. 
\end{definition}

Accordingly, we say that a rational language~$L$ is~\adev if~$\genz$
is~\adev.
Using the above notation, $L$~is \adev if and only if 
all the~$\delta_{j}$, $j=\nobreak2,\ldots, k$,  but~$\delta_{1}$ 
in~\equnm{asy-eva} are equal to~$0$, that is,  
if and only if~\equnm{asy-eva} takes the following form:
\begin{equation}
\fa\ell \text{ large enough}\quantsp
\uLl =
\lambda^{\ell}\xmd\ell^{d}\xmd\delta_1 + 
     \petito{\lambda^{\ell}\xmd\ell^{d}}
\eqpnt
\eee\eee
\label{q.asy-eva-2}
\end{equation}

\begin{example}
\label{e.a-dev}
The rational \pce language~$K_{3}$ accepted by the 
automaton~$\Ac_{3}$ shown in \figur{a-dev} is~\adev without 
being~\dev.  
The zeroes of the characteristic polynomial of~$\adj[\Ac_{3}]$ 
are~$2$ with multiplicity~$2$ and~$-2$ (with multiplicity~$1$).
The zeroes of~$\pol[K_{3}]=(X^{2}-4)(2-X)$ are the same as we have 
\begin{equation}
\fa\ell\in\N\quantsp
\uLl =
\left(\frac{1}{4}\xmd\ell + \frac{7}{8}\right)\xmd 2^{\ell} 
+ \frac{1}{8}\xmd(-2)^{\ell}
\eqpnt
\eee
\label{q.a-dev}
\end{equation}
On the other hand, the language~$K_{1}$ from \exemp{sqr-4}, which is
not~\dev, is not~\adev either.
\end{example}

\begin{figure}[h]
\setlength{\lga}{4cm}
\ifelsevier\else\medskipneg\fi
\centering
\VCDraw{%
\begin{VCPicture}{(-\lga,-1)(\lga,1)}
\SmallState
\State{(0,0)}{A}\State{(\lga,0)}{B}\State{(-\lga,0)}{C}
\Initial[n]{A}\Final[s]{A}\Final[s]{B}\Final[s]{C}
\ArcL{A}{B}{a,b}
\ArcL{B}{A}{c,d}
\EdgeR{A}{C}{c}
\LoopW{C}{a,b}
\end{VCPicture}}%
\caption{The minimal automaton~$\Ac_{3}$ of 
$K_{3}=(\{a,b\}\{c,d\})^*\{a,b,\epsilon\}
\xmd\cup\xmd c\xmd\{a,b\}^*$.} 
\label{f.a-dev}
\end{figure}
\ifelsevier\else\medskipneg\fi

\begin{theorem}
\label{t.lgr-a-dev}
A rational language~$L$ is \adev if and only if the local growth 
rate~$\igrr$ exists. 
In this case, the modulus of~$L$ is equal to~$\igrr$.
\end{theorem}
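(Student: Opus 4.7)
The plan is to derive both implications from the asymptotic formula~\equnm{asy-eva}, which I abbreviate as $\uLl = \lambda^{\ell}\xmd\ell^{d}(\delta_{1}+f(\ell)) + \petito{\lambda^{\ell}\ell^{d}}$, where $\lambda$ is the modulus of~$L$, $d$ is the degree of~$P_{1}$ (so the multiplicity of $\lambda$ equals $d+1$), $\delta_{1}>0$ is real, and $f(\ell) := \sum_{j=2}^{k}\delta_{j}\xmd\eulnum^{i\xmd\ell\xmd\theta_{j}}$ is periodic of period $r=\operatorname{lcm}(h_{j})$.

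For the direct implication, assume $L$ is \adev. Then by~\equnm{asy-eva-2} all $\delta_{j}$ with $j\geq 2$ vanish, so $\uLl = \delta_{1}\xmd\lambda^{\ell}\xmd\ell^{d}+\petito{\lambda^{\ell}\ell^{d}}$, and forming the ratio gives $\uL{\ell+1}/\uLl = \lambda\xmd(1+1/\ell)^{d}(1+\petito{1}) \to \lambda$. Hence $\igrr$ exists and equals the modulus $\lambda$ of~$L$.

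For the converse, I argue contrapositively: suppose $L$ is not \adev, so that some $\delta_{j}\neq 0$ with $j\geq 2$. A finite sum of exponentials at distinct nonzero frequencies cannot reduce to a constant unless all coefficients vanish, hence $f$ is a nonconstant function on $\Z/r\Z$; and since $\uLl\geq 0$, the values $g(s):=\delta_{1}+f(s)$ form a nonconstant nonnegative real-valued function on $\Z/r\Z$. Restricting $\ell$ to an arithmetic progression $\ell\equiv s\pmod r$ on which $g(s)>0$, the asymptotic formula yields $\uL{\ell+1}/\uLl\to\lambda\xmd g(s+1)/g(s)$. The product of these fractions over a full period telescopes to~$1$, so if they were all equal they would all equal~$1$, making $g$ constant---a contradiction. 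Hence distinct residues produce distinct subsequential limits of $\uL{\ell+1}/\uLl$, so $\igrr$ does not exist. In the degenerate subcase where $g$ vanishes somewhere, nonconstancy and nonnegativity of $g$ force the cyclic sequence to contain both a transition from a zero to a positive value and its reverse, yielding subsequences of $\uL{\ell+1}/\uLl$ tending respectively to~$+\infty$ and to~$0$, again preventing $\igrr$ from existing.

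The main obstacle I anticipate is the clean handling of the degenerate case where $g$ vanishes; it can be unified with the strictly-positive case by observing that $\liminf$ and $\limsup$ of $\uL{\ell+1}/\uLl$ must differ whenever~$g$ is nonconstant, which is the negation of the existence of~$\igrr$.
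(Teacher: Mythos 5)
Your proof is correct and follows essentially the same route as the paper's: both directions rest on the asymptotic formula \equnm{asy-eva}, the periodicity of the weight function $\wea{\ell}=\delta_1+\sum_{j\ge2}\delta_j\xmd\eulnum^{i\xmd\ell\xmd\theta_j}$ with period~$r$, and the linear independence of the characters $\ell\mapsto\eulnum^{i\xmd\ell\xmd\theta_j}$ on $\Z/r\Z$ (which the paper packages as the Vandermonde system~\equnm{lgr-dev-3}). You phrase the converse contrapositively rather than directly and add an explicit treatment of the residues at which the weight function vanishes --- a point the paper passes over silently --- but the substance of the argument is the same.
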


\begin{proof}
If~$L$ is~\adev, 
the asymptotic expression~\equnm{asy-eva-2} shows that the condition 
is sufficient since
\begin{equation}
\frac{\uL{\ell+1}}{\uLl} = 
\lambda\xmd\left(\frac{\ell+1}{\ell}\right)^{d}\xmd(1+\petito{1})
\notag
\end{equation}
implies that
\begin{equation}
    \lim_{\ell\rightarrow\infty}\frac{\uL{\ell+1}}{\uLl} =
    \lambda
    \eqvrg
    \label{q.dom-eig-3}
\end{equation}
which states both that~$\igrr$ exists and is equal to~$\lambda$.

Conversely, let us suppose that the limit of
$\displaystyle{\frac{\uL{\ell+1}}{\uLl}}$
exists and is equal to~$\igrr$ when~$\ell$ tends to infinity.
For the ease of writing, and in view of the use of~\equnm{asy-eva}, 
let us set:
\begin{equation}
\wea{\ell} = \delta_1 +
   \sum_{j=2}^k \delta_{j}\xmd\eulnum^{i\xmd\ell\xmd\theta_{j}}
\eqpnt
\notag
\end{equation}
The function~$\wea{\ell}$ is periodic of period~$r$, and for every 
integer~$s$, $0\leq s<r$, the hypothesis, and~\equnm{asy-eva}, imply
\begin{multline}
\lim_{p\rightarrow\infty} \frac{\uL{p\xmd r+s+1}}{\uL{p\xmd r+s}}=\\
\lim_{p\rightarrow\infty} 
\left(\lambda\left(\frac{p\xmd r+s+1}{p\xmd r+s}\right)^{d}
    \frac{\wea{s+1}}{\wea{s}}\xmd(1+\petito{1})\right) =
\lambda\xmd\frac{\wea{s+1}}{\wea{s}}=\igrr
\eqpnt
\notag
\end{multline}
Hence, there exists an~$x$ in~$\R_{+}$ such that
\begin{equation}
\fa s\in\N\quantvrg 0\leq s<r\quantsp
\frac{\wea{s+1}}{\wea{s}} =x
\eqpnt
\eee\eee
\notag
\end{equation}
Moreover, since 
$\wea{0}=\wea{r}$ and
\begin{equation}
\frac{\wea{0}}{\wea{0}} =
\frac{\wea{1}}{\wea{0}}\xmd\frac{\wea{2}}{\wea{1}}\cdots
\frac{\wea{0}}{\wea{r-1}} = x^{r}=1
\eqvrg 
\notag
\end{equation}
it follows that $\msp x=1\msp$,
$\msp\lambda=\igrr\msp$ and
\begin{equation}
\fa s\in\N\quantvrg 0\leq s<r\quantsp
\wea{s} = \wea{0} = \delta_{1} =
\delta_1 + \sum_{j=2}^k \delta_{j}\xmd\eulnum^{i\xmd s\xmd\theta_{j}}
\eqpnt
\eee\eee
\notag
\end{equation}
We conclude that the vector
$\begin{pmatrix}
0 & \delta_2 & \cdots & \delta_k
\end{pmatrix}$
is a solution of the Vandermonde linear system (for the sake of 
completeness, we set~$\theta_1=0$): 
\begin{equation}
\begin{pmatrix}
1&1&\cdots &1\\
\eulnum^{i\xmd\theta_1}& \eulnum^{i\xmd\theta_2}& 
                     \cdots & \eulnum^{i\xmd\theta_k}\\
\eulnum^{i\xmd2\xmd\theta_1}& \eulnum^{i\xmd2\xmd\theta_2}& 
                     \cdots & \eulnum^{i\xmd2\xmd\theta_k}\\
\vdots & \vdots & &\vdots \\
\eulnum^{i\xmd(k-1)\xmd\theta_1}& \eulnum^{i\xmd(k-1)\xmd\theta_2}& 
                     \cdots & \eulnum^{i\xmd(k-1)\xmd\theta_k}
\end{pmatrix}
\begin{pmatrix}
\zeta_1\\ \zeta_2\\ \vdots\\ \zeta_k
\end{pmatrix}=
\begin{pmatrix}
    0\\ 0\\ \vdots\\ 0
\end{pmatrix}
\label{q.lgr-dev-3}
\end{equation}
hence identically zero: 
all~$\delta_{j}$, $j=\nobreak2,\ldots, k$, are equal to~$0$ and~$L$ 
is~\adev.
\end{proof} 

\subsection{From dominating eigenvalue to the carry propagation}
\label{s.dom-cp}

With the notions of modulus and of dominating eigenvalue of a rational 
language, we can now state a result that is more general and of which 
\theor{cp-rat-lan} is an obvious corollary.
\begin{theorem}
\label{t.a-dev-cp}%
Let~$L$ be an \adev rational \pce language and~$\lambda$ its modulus. 
If every quotient of~$L$ whose modulus is equal to~$\lambda$ is \adev, 
then~$L$ has a carry propagation and 
$\msp\displaystyle{\CPL=\frac{\lambda}{\lambda-1}}\msp$.
\end{theorem}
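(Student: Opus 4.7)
The plan is to combine \theor{lgr-a-dev} with the identity
\begin{equation*}
\scp{N}=\sum_{k=0}^{\ell-1}\vL{k}+|\LB{u}|\quad\text{with}\quad u=\repr{N},\ \ell=|u|,
\end{equation*}
which follows from \theor{lb-car-pro} by grouping carries by length (as in \corol{sum-cp}).  By \theor{lgr-a-dev}, the \adev hypothesis on $L$ yields $\uLl\sim\delta_{1}\lambda^{\ell}\ell^{d}$ for some $\delta_{1}>0$ and integer $d\ge 0$, so $\uL{\ell+1}/\uLl\to\lambda$; \lemme{cal-cla} then gives $\vL{\ell+1}/\vLl\to\lambda$ and $\sum_{k=0}^{\ell-1}\vL{k}=\frac{\lambda}{\lambda-1}\vL{\ell-1}+o(\vL{\ell-1})$.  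Everything therefore comes down to the uniform estimate
\begin{equation*}
|\LB{u}|=\frac{\lambda}{\lambda-1}\,m+o(\vL{\ell})\quad\text{where}\quad m=N-\vL{\ell-1}
\end{equation*}
counts the words of length $\ell$ lexicographically smaller than $u$.

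Writing $u=b_{1}b_{2}\cdots b_{\ell}$ and classifying each element of $\LB{u}$ by the level $k$ at which it first diverges from the root-to-$u$ path, I would first record the path decompositions
\begin{equation*}
|\LB{u}|=\sum_{k=1}^{\ell}\sum_{\substack{a<b_{k}\\ b_{1}\cdots b_{k-1}a\in L}}\!\bfv[Q_{k,a}]{\ell-k},\quad m=\sum_{k=1}^{\ell}\sum_{\substack{a<b_{k}\\ b_{1}\cdots b_{k-1}a\in L}}\!\bfu[Q_{k,a}]{\ell-k},
\end{equation*}
where $Q_{k,a}=(b_{1}\cdots b_{k-1}a)^{-1}L$ is a quotient of $L$.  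Since $L$ is rational, only finitely many distinct quotients arise among the $Q_{k,a}$, a fact that will be the key to uniformity.

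I would partition these quotients into \emph{heavy} ones (of modulus $\lambda$, which by hypothesis are all \adev) and \emph{light} ones (of modulus at most some $\mu<\lambda$, noting that no quotient can exceed the modulus of $L$).  For each heavy quotient $Q$, \lemme{cal-cla} applied inside $Q$ gives $\bfv[Q]{n}/\bfu[Q]{n}\to\lambda/(\lambda-1)$, and finiteness of the heavy set makes this convergence uniform over $Q$.  For light quotients $\bfv[Q]{n}=O(\mu^{n}n^{D})$, so the total contribution of light terms in both path sums is dominated by a geometric sum of order $\mu^{\ell}\,\mathrm{poly}(\ell)=o(\lambda^{\ell})=o(\vL{\ell})$ and is therefore negligible.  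Substituting the uniform heavy ratio term-by-term and absorbing the bounded number of terms for which $\ell-k$ is too small to feel the asymptotic, the heavy part of $|\LB{u}|$ equals $\lambda/(\lambda-1)$ times the heavy part of $m$ up to $o(\vL{\ell})$; together with the light bound this gives $|\LB{u}|=\frac{\lambda}{\lambda-1}m+o(\vL{\ell})$ uniformly in $u$ of length $\ell$.

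Assembling the pieces, $\scp{N}=\frac{\lambda}{\lambda-1}(\vL{\ell-1}+m)+o(\vL{\ell})=\frac{\lambda}{\lambda-1}N+o(\vL{\ell})$, and since $N\ge\vL{\ell-1}=\Theta(\vL{\ell})$ the error is $o(N)$, whence $\scp{N}/N\to\lambda/(\lambda-1)$, proving that $\CPL$ exists and equals $\lambda/(\lambda-1)$.  The main obstacle I anticipate is making the uniformity in the heavy-quotient step fully rigorous: the convergence rate of $\bfv[Q]{n}/\bfu[Q]{n}$ must be controlled independently of the heavy quotient $Q$ (for which finiteness of the set of quotients is used essentially), and the bounded boundary contribution from indices $k$ near $\ell$ must be absorbed without damaging the main estimate.  \theor{cp-rat-lan} then follows as an immediate corollary, since the existence of the local growth rate in every quotient of $L$ forces each quotient to be \adev by \theor{lgr-a-dev}, and in particular the quotients of modulus $\lambda$ satisfy the hypothesis of the present theorem.
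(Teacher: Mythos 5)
Your proposal is correct and follows essentially the same route as the paper's proof: the same subtree decomposition of the left bank indexed by the divergence level and the quotient $Q_{k,a}=(b_1\cdots b_{k-1}a)^{-1}L$, the same heavy/light dichotomy on the moduli of the finitely many quotients, the same use of the \adev hypothesis (via the local growth rate and \lemme{cal-cla}) to control the heavy contributions, and the same geometric bound for the light ones. The one place the paper is more streamlined is precisely the uniformity concern you flag: instead of tracking which quotient occurs at each $(k,a)$, the paper replaces each summand $\bfz[q_0\act\prew{j}a]{\ell-j}$ by the $q$-independent majorant $\sum_{q\in Q}\bigl|\bfz[q]{\ell-j}\bigr|$, permutes the sums, and handles each state $q$ of the automaton separately, which sidesteps the uniform-rate bookkeeping entirely.
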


Indeed, previous results  show that if a rational \pce language~$L$ 
is~\adev and of modulus~$\lambda$, then~$\igrr$ exists 
and~$\msp\igrr=\lambda\msp$ and if the carry propagation~$\CPL$ exists, 
then~$\msp\displaystyle{\CPL=\frac{\lambda}{\lambda-1}}\msp$. 
The hypothesis on the quotients of~$L$ is necessary
as shown by the following example.

\begin{example}
\label{e.ctr-exp-js}%
Let~$K_{4}$ be the language accepted by the 
automaton~$\Ac_{4}$ shown in 
\figur{ctr-exp-js}.
It has first the property of being a \dev (and not only an~\adev) 
language, of modulus~$2$.

On the other hand, 
$\msp K_{4}=\epsilon\cup a\xmd K_{1}\cup b\xmd K_{1}\cup c\xmd 
K'_{1}\msp$
where~$K_{1}$ is the language of \examp{sqr-4} and~$K'_{1}$ 
the one accepted by the automaton~$\Ac'_{1}$ obtained from the 
automaton~$\Ac_{1}$ of \figur{sqr-4a} by changing the initial state.
The language~$K_{1}$ is a quotient of~$K_{4}$: $K_{1}=a^{-1}K_{4}$; it
has modulus~$2$ and \emph{is not} \adev.

We have seen that
$\bfu[K_{1}]{\ell}= \frac{3}{4}\xmd2^{\ell} + 
\frac{1}{4}\xmd(-2)^{\ell}$;
similarly
$\bfu[K'_{1}]{\ell}= \frac{3}{2}\xmd2^{\ell} - 
\frac{1}{2}\xmd(-2)^{\ell}$.
Hence
$\msp\bfu[K_{4}]{0}=1\msp$ and
$\msp\bfu[K_{4}]{\ell+1}=2\xmd\bfu[K_{1}]{\ell}+\bfu[K'_{1}]{\ell}=
3\cdot 2^{\ell}\msp$.
From which one deduces
$\msp\bfv[K_{4}]{0}=1\msp$ and 
$\msp\bfv[K_{4}]{\ell+1} = 3\cdot 2^{\ell+1} -2\msp$ and
\begin{equation}
\bfw[K_{4}]{\ell+1}= 1 + \sum_{j=0}^{\ell} \bfv[K_{4}]{j+1}
   = 3\cdot 2^{\ell+2} -2\xmd\ell -7
\eqpnt
\notag
\end{equation}

\begin{figure}[h]
\setlength{\lga}{4cm}
\centering
\VCDraw{%
\begin{VCPicture}{(-1\lga,-1.1)(2\lga,1.6)}
\VCPut{(-1.2\lga,0)}{\State[p]{(0,0)}{A}\State[q]{(\lga,0)}{B}}
\VCPut{(1.2\lga,0)}{\State[r]{(0,0)}{C}\State[s]{(\lga,0)}{D}}
\State[i]{(.5\lga,.3\lga)}{O}
\Initial[n]{O}\Final[s]{O}
\Final[s]{A}\Final[s]{B}
\Final[s]{C}\Final[s]{D}
\ArcR{O}{A}{a,b}\ArcL{O}{C}{c}
\ArcL[.7]{A}{B}{a}\ArcL{B}{A}{a,b,c,d}
\ArcL{D}{C}{a}\ArcL{C}{D}{a,b,c,d}
\end{VCPicture}}%
\caption{The automaton~$\Ac_{4}$}
\label{f.ctr-exp-js}
\end{figure}

We show that $\CPL[K_{4}]$ does not exist with the same argument as
the one developed in \examp{un-bal}.
We choose a sequence of words~$u_{\ell}$ and then a sequence of 
numbers~$N(\ell)=\val[K_{4}]{u_{\ell}}$ and show that the ratio
$\scp[K_{4}]{N(\ell)}/N(\ell)$ does not have~$2$ as limit (and even 
that it has no limit).

We choose the words in
$\msp b\xmd\Maxlg[b^{-1}K_{4}]= \Maxlg[b\xmd K_{1}]\msp$.
It follows that
\begin{gather}
N(\ell+2)=\bfv[K_{4}]{\ell+1}+2\xmd\bfu[K_{4}]{\ell+1}
\ee\text{and}
\notag\\
\begin{split}
\scp[K_{4}]{N(\ell+2)}&=\bfw[K_{4}]{\ell+1}+
    \jsCard{\LB[K_{4}]{\Succ[K_{4}]{\repr[K_{4}]{N(\ell+2)}}}}\\
	&=\bfw[K_{4}]{\ell+1}+2\xmd\bfv[K_{4}]{\ell+1}
\eqpnt
\notag
\end{split}
\end{gather}

\noindent
We then have:
\begin{gather}
	 N(2\xmd k+2)= 2\cdot 2^{2\xmd k+2} -2
\ee\text{and}\ee
	  N(2\xmd k+3)= \frac{5}{2}\xmd 2^{2\xmd k+3} -2
\eqvrg
\notag\\
\scp[K_{4}]{N(2\xmd k+2)} \sim  \frac{13}{3}\xmd 2^{2\xmd k+2}
\ee\text{and}\ee
\scp[K_{4}]{N(2\xmd k+3)} \sim  \frac{14}{3}\xmd 2^{2\xmd k+3}
\eqvrg
\notag\\
\text{hence}\e
\lim_{\ell\to+\infty}\frac{\scp{N(2\ell)}}{N(2\ell)}=\frac{13}{6}
\ee\text{and}\ee
\lim_{\ell\to+\infty}\frac{\scp{N(2\ell+1)}}{N(2\ell+1)}=\frac{28}{15}
\ ,
\notag
\end{gather}
which complete the proof of the non-existence of~$\CPL[K_{4}]$.
\end{example}


The proof of \theor{a-dev-cp} requires a new description of the `left
bank' of a word (recall the definition p.\xmd\pageref{p.car-pro-agg}),
a description for which we introduce some notation.
In order to keep these new symbols readable, we simplify some of those
already in use.

\renewcommand{\uL}[1]{\bfu[]{#1}}%
\renewcommand{\vL}[1]{\bfv[]{#1}}%
\renewcommand{\pol}[1][]{\polop_{#1}}
For the remaining of the section, the \adev rational \pce language~$L$ is 
fixed and kept implicit in most cases:
the number of words of~$L$ of length~$\ell$ (resp.  of length less
than or equal to~$\ell$) is now denoted by~$\uLl$ (resp.  by~$\vLl$)
and the minimal polynomial of~$L$ is now denoted by $\pol$.

Let~$\msp\Ac=\aut{A,Q,q_{0},\delta,T}\msp$ be a deterministic
finite automaton that accepts~$L$ (and which is also kept implicit in 
what follows).
For every~$q$ in~$Q$ and~$w$ in~$\Ae$, we write~$q\act w$ for the
state reached by the computation of~$\Ac$ starting in~$q$ and labeled
with~$w$.
For every state~$q$ in~$Q$, we denote by~$\Lq$ the language accepted 
by the automaton
$\msp\Ac_{q}=\aut{A,Q,q,\delta,T}\msp$,
that is,
$\msp\Lq=\Defi{w\in\Ae}{q\act w\in T}\msp$
and, for every~$\ell$ in~$\N$, by~$\bfu[q]{\ell}$
the number of words of~$\Lq$ of length~$\ell$ and by~$\bfv[q]{\ell}$ 
the number of words of~$\Lq$ of length less than or equal to~$\ell$
(in particular, $\msp\uLl=\bfu[q_{0}]{\ell}\msp$ 
and~$\msp\vLl=\bfv[q_{0}]{\ell}\msp$).\footnote{%
  These definitions hide some technicalities: for~$q\act w$ to be 
  defined for all~$q$ and~$w$, $\Ac$ needs to be not necessarily trim 
  but possibly endowed with a sink state~$s$; then~$L_{s}$ will be empty 
  and~$\bfu[s]{\ell}$ equal to~$0$ for every~$\ell$.}

For every~$q$ in~$Q$, there exists a word~$w_{q}$ 
of length~$\ell_{q}$ such that
$\msp q_{0}\act w_{q} = q\msp$
and then
$\msp L_{q} = w_{q}^{-1}L\msp$: $L_{q}$ is a \emph{quotient} of~$L$.
It then follows
\begin{equation}
\fa q\in Q\quantvrg
\fa\ell\in\N\quantvrg\ell\geq\ell_{q} 
\quantsp
\uL{\ell+\ell_{q}} \geq \bfu[q]{\ell} 
\eqpnt
\eee\eee
\label{q.dev-cp-0}
\end{equation}

If $\msp w=a_1 a_{2}\cdots a_{\ell+1}\msp$ is a word of~$\Ae$, we denote 
by~$\prew{j}$ \emph{the prefix of length~$j$} of~$w$: 
$\msp\prew{j}=a_1a_{2}\cdots a_{j}\msp$;
$\prew{0}=\epsilon$ and~$\prew{\ell+1}=w$. 
(The formulas to come are simpler if the length of~$w$ is written 
as~$\ell+1$ rather than~$\ell$.)
Suppose~$w$ is in~$L$.
The left bank of~$w$, $\LB{w}$, is, by definition, for each
length~$j$, $1\leq j\leq\ell+1$, the set of words of~$L$ of length~$j$
that are less than~$\prew{j}$ in the lexicographic order.
This is a description by \emph{horizontal layers}.
The same set can be given a decomposition by \emph{subtrees}
of~$\TcL$.
For every~$j$, $0\leq j\leq\ell$, the prefix of~$w$ of length~$j+1$
is~$\prew{j+1}=\prew{j}a_{j+1}$.
Then, for every~$j$, $0\leq j\leq\ell$ and for every~$a$, $a<a_{j+1}$,
$\LB{w}$ contains all words of~$L_{q_{0}\act\prew{j}a}$ of length less
than, or equal to,~$\ell-j$ concatenated on the left with~$\prew{j}a$.
Moreover, these subsets form a \emph{partition} of~$\LB{w}$:
\begin{equation}
\LB{w} = \bigcup_{j=0}^{\ell} \left[\bigcup_{a<a_{j+1}} 
             \prew{j}a\xmd\left(L_{q_{0}\act\prew{j}a}\cap A^{\leq\ell-j}
	     \right)\right]
\eqvrg
\label{q.dev-cp-1}
\end{equation}
where the unions are pairwise disjoint and can then be used for 
counting the elements of~$\LB{w}$.

\begin{proof}[Proof of \theor{a-dev-cp}]
Let~$\lambda$ be the modulus of~$L$.
Let~$N$ be an integer and
$\msp\repr{N}=\nobreak w=\nobreak a_1a_{2}\cdots a_{\ell+1}\msp$ its 
$L$-representation (see Sec.~\ref{s.lan-ans}).
By definition, $N$ is equal to the number of words of~$L$ that are 
less than~$w$ in the radix order, that is, in the line\footnote{%
   This is a reformulation of Lemma~3 in \cite{LecoRigo01}.} 
of the 
decomposition~\equnm{dev-cp-1}:
\begin{equation}
N = \vL{\ell} + 
    \sum_{j=0}^{\ell} \left[\sum_{a<a_{j+1}} 
             \bfu[q_{0}\act\prew{j}a]{\ell-j}\right]
\eqpnt
\label{q.dev-cp-2}
\end{equation}
On the other hand, \corol{sum-cp} and~\equnm{dev-cp-1} yield the 
following expression for the sum of the carry propagations at the 
first~$N$ words of~$L$:
\begin{equation}
\scp{N} = \sum_{j=0}^{\ell}\vL{j} + 
    \sum_{j=0}^{\ell} \left[\sum_{a<a_{j+1}} 
             \bfv[q_{0}\act\prew{j}a]{\ell-j}\right]
\eqpnt
\label{q.dev-cp-3}
\end{equation}
By \propo{car-pro-fil} and~\theor{lgr-a-dev}, if the carry 
propagation
\begin{equation}
\CPL=\lim_{N\to\infty}\frac{1}{N}\scp{N}\notag
\end{equation}
of~$L$ exists, it must be equal 
to~$\displaystyle{\frac{\lambda}{\lambda-1}}$. 
We thus evaluate
\begin{equation}
\lim_{N\to\infty}\left(\frac{1}{N}\scp{N}
            -\frac{\lambda}{\lambda-1}\right)
\notag
\end{equation}
and show it exists and is equal to~$0$, using 
both~\equnm{dev-cp-2} and~\equnm{dev-cp-3}.
We write:
\begin{multline}
\frac{1}{N}\left(\scp{N}-\frac{\lambda}{\lambda-1}\xmd N\right)=
\frac{1}{N}\left(\sum_{j=0}^{\ell}\vL{j}
            -\frac{\lambda}{\lambda-1}\vL{\ell}\right)\\
+\frac{1}{N}\left(\sum_{j=0}^{\ell} \left(\sum_{a<a_{j+1}} 
             \left(\bfv[q_{0}\act\prew{j}a]{\ell-j}
            -\frac{\lambda}{\lambda-1}\bfu[q_{0}\act\prew{j}a]{\ell-j}
	    \right)\right)\right)
\eqpnt
\notag
\end{multline}
The two parts of the right-hand side of the equation are evaluated 
separately. 
We first note that~$\ell$ tends to infinity with~$N$ and recall that, 
by~\equnm{dom-eig-3}, 
\begin{equation}
\lim_{\ell\to\infty}\frac{\uL{\ell+1}}{\uLl}= \lambda
\eqpnt
\notag
\end{equation}
Since~$N\ge\vL{\ell}$, we have:
\begin{equation}
\frac{1}{N}\left|\sum_{j=0}^{\ell}\vL{j}
            -\frac{\lambda}{\lambda-1}\vL{\ell}\right| \leq
\left|\frac{1}{\vL{\ell}}\left(\sum_{j=0}^{\ell}\vL{j}\right)
            -\frac{\lambda}{\lambda-1}\right|	    
\notag
\end{equation}
which, by \lemme{cal-cla}, tends to~$0$ when~$\ell$ tends to infinity.

The second term requires some more work.
For the ease of writing, let us set, for every~$q$ in~$Q$ and 
every~$\ell$ in~$\N$,
\begin{equation}
\bfz{\ell} = 
   \left(\bfv[q]{\ell}-\frac{\lambda}{\lambda-1}\bfu[q]{\ell}\right)
\eqpnt
\notag
\end{equation}
The term we have to evaluate reads then
\begin{equation}
\frac{1}{N}\left|\sum_{j=0}^{\ell} \left(\sum_{a<a_{j+1}} 
          \left(\bfz[q_{0}\act\prew{j}a]{\ell-j}\right)\right)\right|
\label{q.dev-cp-4}
\end{equation}
and is (loosely) bounded by
\begin{equation}
    (\jsCard{A}-1)\xmd \frac{1}{\vL{\ell}}\xmd     
    \sum_{j=0}^{\ell} \left(\sum_{q\in Q}\left|\bfz{j}\right|\right)
    \eqpnt 
    \label{q.dev-cp-5}   
\end{equation}
Indeed, the range of every inner sum in~\equnm{dev-cp-4} is a subset of
the alphabet~$A$ made of letters less than a given one, hence every
such sum contains at most $(\jsCard{A}-1)$ terms.
Moreover, we have replaced every term
\begin{equation}
\bfz[q_{0}\act\prew{j}a]{\ell-j}
\ee\text{by the sum}\ee
\sum_{q\in Q}\left|\bfz[q]{\ell-j}\right|
\eqpnt 
\notag
\end{equation}
This is of course a loose upper bound but it allows us to get rid 
of the problem of taking the limit of quantities that are different 
when~$N$ tends to infinity.
Finally, we permute the two summations in~\equnm{dev-cp-5} and it
remains to show that, for every state~$q$ in~$Q$,
\begin{equation}
\lim_{\ell\to\infty}
\msp\xmd\xmd
\frac{1}{\vL{\ell}}\xmd    
\left(\sum_{j=0}^{\ell} \left|\bfz[q]{j}\right|\right)
    =0
\eqvrg 
\label{q.dev-cp-6}   
\end{equation}
and we need to go more into details for that purpose.

For every~$q$ in~$Q$, $L_{q}$ is accepted by~$\Ac_{q}$, the 
accessible part of which is a subautomaton of~$\Ac$.
Hence, the sequence~$\sequ{\bfu[q]{\ell}}$ satisfies a linear
recurrence relation whose minimal polynomial~$\pol[q]$ is, as
is~$\pol$, a factor of~$\pol[\Ac]$.
However, the zeroes of~$\pol[q]$ are not necessarily a subset of those
of~$\pol$ and we base the comparison between~$\sequ{\bfu[q]{\ell}}$
and~$\sequ{\uLl}$ on the asymptotic behaviour.

The series
$\msp\gen[q]{z}=\sum_{\ell=0}^{\infty}\bfu[q]{\ell}\xmd z^{\ell}\msp$
is an $\N$-rational series and, for the same reasons as above, it has 
a real eigenvalue of maximal modulus~$\mu_{q}$ and of 
multiplicity~$d_{q}+1$, and one can write:
\begin{equation}
\fa\ell \text{ large enough}\quantsp
\bfu[q]{\ell} =
\mu_{q}^{\ell}\xmd\ell^{d_{q}}\xmd 
\left(\delta_{q,1} +
   \sum_{j=2}^k \delta_{q,j}\xmd\eulnum^{i\xmd\ell\xmd\theta_{q,j}}\right) + 
   \petito{\mu_{q}^{\ell}\xmd\ell^{d_{q}}}
\eqvrg
\label{q.asy-eva-q}
\end{equation}
where~$d_{q}$, the~$\delta_{q,j}$'s and the~$\theta_{q,j}$'s play the 
same role as~$d$, the~$\delta_{j}$'s and the~$\theta_{j}$'s play in 
\equat{asy-eva}. 

There are two cases: either~$\mu_{q}$ is less  
than~$\lambda$, or~$\mu_{q}$ is equal to~$\lambda$.
It cannot be larger than~$\lambda$ for otherwise~$\bfu[q]{\ell}$ 
would not be bounded by~$\uLl$ (\equat{dev-cp-0}).
In the first case, the quantity
$\msp\displaystyle{\left|\bfz[q]{j}\right| = 
\left|\bfv[q]{j}-\frac{\lambda}{\lambda-1}\bfu[q]{j}\right|}\msp$
is of the order of~$\mu_{q}^{\ell}$, in the second case, of the order 
of~$\petito{\lambda^{\ell}\ell^{d}}$, hence, in both 
cases,~\equnm{dev-cp-6} holds.
More precisely, the computations go as follows. 
The reader will see that the hypothesis on the quotient plays its role
in the second case only.

\medskip
\noindent
Case 1: $\msp\mu_{q}<\lambda\msp$. 
The case $\mu_q=1$ corresponds to sequences $\bfu[q]{\ell}$ and thus
$\bfv[q]{\ell}$ having a polynomial growth.  
In which case, \equnm{dev-cp-6} directly holds.  
In the following, we assume that~$\mu_q>1$.
 
The quantity 
$\msp \wop_q(\ell) = 
        \delta_{q,1} +
        \sum_{j=2}^k 
            \delta_{q,j}\xmd\eulnum^{i\xmd\ell\xmd\theta_{q,j}}
\msp$
is periodic (with some period~$h_{q}$) and, \emph{since the 
sequence~$\sequ{\bfu[q]{\ell}}$ is monotonically increasing}, there 
exist bounds $\alpha_{q}$ and~$\beta_{q}$, 
$\msp0<\alpha_{q}\leq\beta_{q}\msp$,  such that 
\begin{equation}
\fa\ell\in\N \quantsp
\alpha_{q}\leq\wop_{q}(\ell)\leq \beta_{q}
\eqpnt
\eee
\notag
\end{equation}
It follows that
\begin{alignat}{3}
&\fa\ell\in\N \quantsp&
\mu_{q}^{\ell}\xmd\ell^{d_{q}}\xmd\alpha_{q}
+\petito{\mu_{q}^{\ell}\xmd\ell^{d_{q}}}
&\leq&\bfu[q]{\ell}&\leq 
\mu_{q}^{\ell}\xmd\ell^{d_{q}}\xmd\beta_{q}
+\petito{\mu_{q}^{\ell}\xmd\ell^{d_{q}}}
\eqvrg
\eee
\notag\\
\intertext{and}
&\fa\ell\in\N \quantsp&
\frac{\mu_{q}}{\mu_{q}-1}\xmd
\mu_{q}^{\ell}\xmd\ell^{d_{q}}\xmd\alpha_{q}
+\petito{\mu_{q}^{\ell}\xmd\ell^{d_{q}}}
&\leq&\bfv[q]{\ell}&\leq 
\frac{\mu_{q}}{\mu_{q}-1}\xmd
\mu_{q}^{\ell}\xmd\ell^{d_{q}}\xmd\beta_{q}
+\petito{\mu_{q}^{\ell}\xmd\ell^{d_{q}}}
\eqvrg
\eee
\notag
\end{alignat}
hence
\begin{equation}
\fa\ell\in\N \quantsp
\mu_{q}^{\ell}\xmd\ell^{d_{q}}\xmd\alpha'_{q}
+\petito{\mu_{q}^{\ell}\xmd\ell^{d_{q}}}
\leq\left|\bfv[q]{\ell}
-\frac{\lambda}{\lambda-1}\bfu[q]{\ell}\right|\leq 
\mu_{q}^{\ell}\xmd\ell^{d_{q}}\xmd\beta'_{q}
+\petito{\mu_{q}^{\ell}\xmd\ell^{d_{q}}}
\eqvrg
\eee
\notag
\end{equation}
where
\begin{equation}
\alpha'_{q} = \frac{\mu_{q}}{\mu_{q}-1}\xmd
\left(\alpha_{q}-\frac{\lambda}{\lambda-1}\beta_{q}\right)
\ee\text{and}\ee
\beta'_{q} = \frac{\mu_{q}}{\mu_{q}-1}\xmd
\left(\beta_{q}-\frac{\lambda}{\lambda-1}\alpha_{q}\right)
\eqpnt
\notag
\end{equation}
It follows that the quantity 
\begin{equation}
    \sum_{j=0}^{\ell} 
    \left|\bfv[q]{j}
    -\frac{\lambda}{\lambda-1}\bfu[q]{j}\right|
\notag
\end{equation}
is also of the order of~$\mu_{q}^{\ell}$ and since~$\vLl$ is of the 
order of~$\lambda^{\ell}$, \equnm{dev-cp-6} holds.

\medskip
\noindent
Case 2: $\msp\mu_{q}=\lambda\msp$.
In this case, and since by hypothesis, $L_{q}$ is \adev, 
every~$\delta_{q,j}=0$, $2\leq j\leq k$, and it holds:
\begin{equation}
\fa\ell \text{ large enough}\quantsp
\bfu[q]{\ell} =
\lambda^{\ell}\xmd\ell^{d_{q}}\xmd 
\delta_{q,1} +
   \petito{\lambda^{\ell}\xmd\ell^{d_{q}}}
\eqpnt
\eee
\notag
\end{equation}
It follows that
$\msp\displaystyle{\left|\bfv[q]{\ell}
-\frac{\lambda}{\lambda-1}\bfu[q]{\ell}\right|}\msp$
is a~$\petito{\lambda^{\ell}\xmd\ell^{d_{q}}}$ 
with $d_q \le d$
and 
\equnm{dev-cp-6} holds again, which completes the proof.
\end{proof}

 

\section{The carry propagation of a language: 
 \protect\\ \eee \eee \e 
 an ergodic point of view}
\label{s.erg-poi-vie}%

The definition of the carry propagation of a language:
\begin{equation}
\CPL = \lim_{N \to \infty} \frac{1}{N}\sum_{i=0}^{N-1}\cpL{i}
\eqvrg
\label{q.cp-erg-sum}
\end{equation}
especially if we write it as:
\begin{equation}
\CPL = \lim_{N \to \infty} 
       \frac{1}{N}
	   \sum_{i=0}^{N-1}
	   \cpL{\Succop_{L}^{i}\!\left(\varepsilon\right)}
\eqvrg
\notag
\end{equation}
inevitably reminds one of the \emph{Ergodic Theorem} (that we recall 
right below).
In this section, we explain how to set the carry propagation problem
in terms relevant to ergodic theory and we study under which
conditions and to what extent the Ergodic Theorem allows us to
conclude the existence of the carry propagation.
We begin with a very brief account of ergodic theory; for more 
detailed definitions, see~\cite{Pete83} for instance.

\subsection{Birkhoff's Ergodic Theorem}

A \emph{dynamical system}~$(\Kc,\tau)$ is a compact set~$\Kc$,
equipped with a map~$\tau$ from~$\Kc$ into itself,
called the \emph{action} of the system.
A probability measure~$\mu$ on~$\Kc$ is \emph{$\tau$-invariant} 
if~$\tau$ is measurable and if
$\msp\mu(\tau^{-1}(B))=\mu(B)\msp$ 
for every measurable set~$B$.
The dynamical system~$(\Kc,\tau)$ is said to be \emph{ergodic} 
if~$\tau^{-1}(B)=B$ implies~$\mu(B)=0$ or~$1$, for every 
$\tau$-invariant measure~$\mu$.
It is \emph{uniquely ergodic} if it admits a unique $\tau$-invariant
measure (if there exists only one $\tau$-invariant measure, then it is
ergodic).
The Ergodic Theorem then reads.

\begin{theorem}
\label{t.erg}
Let~$(\Kc,\tau)$ be a dynamical system, $\mu$~a $\tau$-invariant 
measure on~$\Kc$ and 
$\msp f\colon\Kc\rightarrow\R\msp$ a function\footnote{%
   That is, $f$ is absolutely (Lebesgue) $\mu$-integrable.} 
in~$\Lone{\mu}$.
If $(\Kc,\tau)$ is \emph{ergodic} then, \emph{for $\mu$-almost all~$s$
in}~$\Kc$,
\begin{equation}
\lim_{N \to\infty}\frac{1}{N}\sum_{i=0}^{N-1} f(\tau^i(s))=
     \int_{\KcG} f d\mu
\eqpnt
\label{q.cp-erg-int}
\end{equation}
Moreover, if $(\Kc,\tau)$ is \emph{uniquely ergodic} and if~$f$ 
and~$\tau$ are \emph{continuous}, then~\equnm{cp-erg-int} holds 
\emph{for all~$s$ in}~$\Kc$. 
\end{theorem}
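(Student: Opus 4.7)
The plan is to prove the almost-sure convergence statement via Birkhoff's classical route through a maximal inequality, and then to upgrade to pointwise-everywhere (and actually uniform) convergence in the uniquely ergodic continuous case by a soft compactness argument on empirical measures.

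For the first part, I set $S_N f = \sum_{i=0}^{N-1} f\circ\tau^i$ and introduce $\overline{f}(s)=\limsup_N \frac{1}{N}S_N f(s)$ and $\underline{f}(s)=\liminf_N \frac{1}{N}S_N f(s)$. Both are $\tau$-invariant and, by ergodicity, constant $\mu$-almost everywhere. The technical heart is the Maximal Ergodic Lemma: for every $g\in \Loneop(\mu)$, the set $E=\{s:\sup_{N\geq 1} S_N g(s)>0\}$ satisfies $\int_E g\,d\mu\geq 0$. I would prove this following the Garsia argument, based on the elementary inequality $S_N g(s)\leq g(s)+\max(0,S_{N-1}g(\tau(s)))$ together with an integration over truncations $E_M=\{\sup_{N\leq M}S_N g>0\}$. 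Applying the lemma to $g=f-\alpha$ for $\alpha>\int f\,d\mu$ shows that $\mu\{\overline{f}>\alpha\}=0$, and symmetrically $\mu\{\underline{f}<\alpha\}=0$ for $\alpha<\int f\,d\mu$. Combined with the $\mu$-a.e. constancy of $\overline{f}$ and $\underline{f}$, this forces $\overline{f}=\underline{f}=\int f\,d\mu$ $\mu$-a.e., establishing \equnm{cp-erg-int}.

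For the uniquely ergodic case with $f$ and $\tau$ continuous, the plan is an argument by contradiction using the empirical measures $\mu_{N,s}=\frac{1}{N}\sum_{i=0}^{N-1}\delta_{\tau^i(s)}$, so that $\frac{1}{N}S_N f(s)=\int_{\Kc}f\,d\mu_{N,s}$. Suppose \equnm{cp-erg-int} were to fail at some $s_0\in\Kc$: then one could extract a subsequence $(N_k)$ along which $\frac{1}{N_k}S_{N_k}f(s_0)$ stays at distance at least $\epsilon$ from $\int f\,d\mu$. By compactness of~$\Kc$ and Prokhorov's theorem, the sequence $\mu_{N_k,s_0}$ admits a weak-$*$ limit~$\nu$, which is a probability measure on~$\Kc$; continuity of $\tau$ ensures that $\nu$ is $\tau$-invariant, so that $\nu=\mu$ by unique ergodicity. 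Continuity of $f$ then gives $\int f\,d\mu_{N_k,s_0}\to\int f\,d\nu=\int f\,d\mu$, a contradiction. An analogous compactness argument applied simultaneously in $s$ (the map $s\mapsto\mu_{N,s}$ has a weakly convergent subsequence uniformly in $s$ by equicontinuity of $f\circ\tau^i$ and compactness of~$\Kc$) delivers the uniform convergence statement.

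The main obstacle is the Maximal Ergodic Lemma: once it is in hand, everything else is a rearrangement of consequences. The uniquely ergodic part, in contrast, is essentially abstract nonsense once the machinery of weak-$*$ convergence on the compact space of probability measures on~$\Kc$ is available. I expect the subtle point, relevant to the paper's intended application to numeration systems, to be exactly where this proof does \emph{not} apply: when $\tau$ is not continuous (the odometer setting), neither invariance of the weak-$*$ limit nor the uniformity upgrade can be obtained directly, which is why the authors announce that additional regularity of the greedy numeration systems has to be exploited to return to the scope of \theor{erg}.
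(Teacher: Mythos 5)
The paper states this result without proof; it is Birkhoff's classical Ergodic Theorem together with the standard uniquely-ergodic refinement, recalled from the literature (the paper points to Petersen's textbook~\cite{Pete83}) rather than re-derived, and it is not ``the paper's own proof'' in any sense. Your sketch is the standard textbook route --- Garsia's proof of the Maximal Ergodic Lemma followed by the $\limsup/\liminf$ argument with $g=f-\alpha$ for the $\mu$-a.e.\ statement, and weak-$*$ compactness of the empirical measures $\mu_{N,s}$ on the compact $\Kc$ plus unique ergodicity for the second statement --- and it is correct for what the theorem actually asserts. The one imprecision is your parenthetical on \emph{uniform} convergence: ``equicontinuity of $f\circ\tau^i$'' does not follow from continuity of $f$ and $\tau$, and the iterates need not be equicontinuous. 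The standard way to obtain uniformity is again by contradiction, extracting a weak-$*$ convergent subsequence of $\mu_{N_k,s_k}$ where the base point $s_k$ is allowed to vary along the offending sequence, not by equicontinuity. Since the theorem as quoted in the paper only claims pointwise convergence for every $s$ in the uniquely ergodic continuous case, your fixed-$s_0$ contradiction argument already establishes exactly what is needed and the uniformity addendum is superfluous. Your closing observation --- that the contradiction argument collapses when $\tau$ is not continuous, since one cannot pass invariance to the weak-$*$ limit --- is precisely the obstruction the paper then works around in the greedy-numeration setting, so you have correctly identified why Theorem~\ref{t.cp-erg} needs the additional machinery of~\cite{BaraGrab16} rather than a direct appeal to Theorem~\ref{t.erg}.
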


This theorem states indeed two results: it says first that the limit
of the left hand-side of~\equnm{cp-erg-int} \emph{exists}, and,
second, it gives the value of this limit.
What is really of interest for us is \emph{the existence of the limit}
since, in most cases, if we know that~$\CP$ exists, we already have
other means to compute it.

We have thus to explain how to turn the language~$L$ into a compact 
set, and how to transform the successor function into a map of this 
compact set into itself.
The hypotheses of the classic formulation of the Ergodic Theorem are
rather restrictive for our case of study.
We shall rely on more recent and technical works \cite{BaraGrab16}
which significantly widen the scope of this theorem.

\subsection{Turning a numeration system into a dynamical system}

Let~$L\subseteq\Ae$ be a numeration system, that is, once again, 
the set of representations of the natural integers.
The purpose is the embedding of~$L$ into a compact set, and 
extending the successor function into an action on that set.
Since we use the \emph{radix order} on words in order to map~$L$ onto
the set of integers or, which amounts to the same thing, since we use
the \emph{most significant digit first} (MSDF) convention for the
representation of integers (assuming a left-to-right reading), we
build the compact set by considering \emph{left} infinite words.

Note that the authors from whom we borrow the results rather use the
\emph{least significant digit first} (LSDF) representation of
integers, and \emph{right} infinite words to build the same compact
set (\cite{GrabEtAl95,BaraGrab16}).
Going from one convention to the other is routine and requires just 
suppleness of mind (or a mirror).

\subsubsection{Compactification of the numeration system}

The set of left infinite words over $A$ is denoted by $\lomA$.
As we did for the words of~$\Ae$, the left infinite words are indexed
from right to left (fortunately):
if~$s$ is in~$\lomA$, we write $s= \cdots s_{2} s_{1} s_{0}$, and 
for $0\le j \le \ell \le +\infty$,
we denote by~$s_{[\ell,j]}$ the 
word~$s_{[\ell,j]}=s_{\ell}s_{\ell-1}\cdots s_{j}$.

As assumed since the beginning of this paper, the alphabet~$A$ is an
alphabet of digits, starting with~$0$: 
$\msp A=\{0,1,\ldots,r-1\}\msp$.
In order to embed finite words into (left) infinite ones, we need the
following assumption.

\begin{assumption} 
\label{ass.ini-dig}
No word of~$L$ begins with~$0$, that is, 
$\msp L\subseteq(A\!\bk\!\{0\})\xmd\Ae\msp$. 
\end{assumption}

This assumption is naturally fulfilled by the classical numeration 
systems in integer bases, the Fibonacci system, \etc
Under this assumption, the map from~$\Ae$ to~$\lomA$ defined by
$\msp w\mapsto\lomz w\msp$ is \emph{a bijection} between~$L$ 
and~$\lomz L$, that is, $L$~embeds in~$\lomA$ and can be 
identified with~$\lomz L$.

\ifelsevier\else\bigskip\fi

The set~$\lomA$ is classically equipped with the \emph{product
topology} or topology of \emph{simple convergence}, that is, the
topology induced by the \emph{distance} between elements defined by
$\dist{s,t} = 2^{-\ecar{s,t}}$ where~$\ecar{s,t}$ is the length of the
longest common right-factor of~$s$ and~$t$.
Under this topology, $\lomA$ is a \emph{compact set}, and so is any 
closed subset of~$\lomA$.

\begin{definition}
The \emph{compactification} of~$L$ is the closure of~~$\lomz L$ 
under the topology of~$\lomA$ and is denoted by~$\KcL$:
\begin{equation}
    \KcL=\overline{\lomz L}=
    \Defi{s\in\lomA}%
       {\forall j\in\N \quantsmsp
        \exists w^{(j)} \in 0^*L \quantsp 
	s_{[j,0]} \text{ is a right-factor of } w^{(j)}}
    \eqpnt
    \notag
\end{equation}
\end{definition}

The topology on~$\KcL$ is the one induced by the topology on~$\lomA$.
For every word~$w$ in~$\Ae$, 
we call the set of elements~$s$ in~$\KcL$ of which~$w$ is a 
right-factor the \emph{cylinder generated by~$w$}, and denote it 
by~$\cyliw$:\footnote{%
   It should be noted that although the \emph{notation}~$\cyliw$ does
   not bear any reference to~$L$, the \emph{set}~$\cyliw$ does depend
   on~$L$.}
\begin{equation}
    \cyliw=\Defi{s\in\KcL}{s_{[|w|-1,0]}=w}
	      = \lomA w \cap \KcL
    \eqpnt
    \notag
\end{equation}
The set of cylinders is a base of open sets of~$\KcL$.
Given any two words~$u$ and~$v$ in~$\Ae$, with~$|u|\leq|v|$,
either
$\msp\cyli{v}\subseteq\cyli{u}\msp$, a case that holds if and only 
if~$u$ is a right-factor of~$v$, or
$\msp\cyli{u}\cap\cyli{v}=\emptyset\msp$.

\subsubsection{Definition of the odometer}

Let~$L\subseteq\Ae$ be a language which satisfies \assum{ini-dig}. 
The successor function $\Succf\colon L\rightarrow L$ is naturally 
transformed into a function 
$\Succf\colon \lomz L\rightarrow \lomz L$
by setting $\Succ{\lomz w} = \lomz\Succ{w}$.

\begin{definition}
We call a function from~$\KcL$ into itself that extends~$\Succf$ an 
\emph{odometer} on~$L$, and we denote it by~$\odof$.
\end{definition}

This definition silently implies that the uniqueness of the odometer
is not guaranteed.
A particular odometer is chosen in the case where~$\Succf$
is continuous:

\begin{definition}
\label{d.con-suc}%
Let~$L$ be a language with the property that 
$\Succf\colon \lomz L\rightarrow \lomz L$ is \emph{continuous}.
Then the odometer~$\odof$ is the \emph{unique} continuous function 
from~$\KcL$ into itself that extends~$\Succf$.
\end{definition}

Uniqueness in the above definition follows from the fact that~$\lomz L$ 
is dense in~$\KcL$. 
If~$\Succf$ is \emph{not continuous}, one has to find other means to
define~$\odof$, and they depend on the cases, and on the authors (see
for instance~\cite{GrabEtAl95}, \cite{BertRigo07}).
We give such a construction, following~\cite{GrabEtAl95},
in~\secti{erg-gre-num}.

\subsubsection{Extension of the carry propagation}

We extend the map~$\rdiffop$ defined in \secti{car-pro-num} on pairs 
of finite words to pairs of elements of~$\lomA$.  
Let~$s$ and~$t$ in~$\lomA$; then:
\begin{equation}
\Delta(s,t) = \left\{
\begin{array}{ll}
\min \Defi{j\in\N}{s_{[\infty ,j]} = t_{[\infty ,j]}}\e &
\text{if}\e
\Defi{j\in\N}{s_{[\infty,j]}=t_{[\infty,j]}}\neq\emptyset\eqvrg
\\
+\infty & \text{otherwise}\eqpnt
\end{array}
\right.
\notag
\end{equation}

Conversely, the definition of~$\rdiffop$ 
on~$\big((A\!\bk\!\{0\})\xmd\Ae\big)^{2}$ 
can be deduced 
from the one on~$(\lomA)^{2}$ which is simpler and we have, 
for~$u$ and~$v$ in~$(A\!\bk\!\{0\})\xmd\Ae$,
\begin{equation}
\rdiff{\lomz u, \lomz v} = \rdiff{u,v}
    \eqpnt
    \eee
    \notag
\end{equation}

When the odometer~$\odof$ will be defined, we shall set, as in
\defin{car-pro}:
\begin{equation}
    \fa s\in\lomA\quantsp
    \capr{s}=\Delta(s,\tau_L(s))
    \eqpnt
    \eee
    \notag
\end{equation}

\begin{proposition}
\label{p.odo-cont}
If~$\odof$ is continuous, then~$\cpf$ is continuous at any point 
where it takes finite values. 
\end{proposition}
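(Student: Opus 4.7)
The plan is to prove the stronger statement that $\cpf$ is \emph{locally constant equal to $k$} on an open neighborhood of any $s\in\KcL$ with $\cpf(s)=k<\infty$; since $\cpf$ is integer-valued, this is equivalent to continuity at $s$ with value $k$. Writing $t=\odof(s)$, the hypothesis $\Delta(s,t)=k$ gives $s_{k-1}\neq t_{k-1}$ and $s_j=t_j$ for all $j\geq k$. The argument splits into a lower bound $\cpf\geq k$ and an upper bound $\cpf\leq k$ in a neighborhood of~$s$.

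For the lower bound I would first note that $\cpf$ is always lower semicontinuous under the hypothesis. Indeed, for each integer $k$ the set $\{(s',t')\in\KcL\times\KcL:\Delta(s',t')\leq k\}=\bigcap_{j\geq k}\{s'_j=t'_j\}$ is a countable intersection of clopens, hence closed, so $\Delta$ is lsc; composing with the continuous map $s'\mapsto(s',\odof(s'))$ makes $\cpf$ lsc. Concretely, continuity of $\odof$ at $s$ supplies a cylinder $\cyli{s_{[N_1,0]}}$ around $s$ (with $N_1\geq k-1$) satisfying $\odof(\cyli{s_{[N_1,0]}})\subseteq\cyli{t_{[k-1,0]}}$; on this cylinder $s'_{k-1}=s_{k-1}\neq t_{k-1}=\odof(s')_{k-1}$, forcing $\cpf(s')\geq k$.

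For the upper bound, observe that $\{\cpf\leq k\}$ is likewise closed in $\KcL$. Since $\lomz L$ is dense in $\KcL$, it suffices to exhibit an open cylinder $U\ni s$ such that $U\cap\lomz L\subseteq\{\cpf\leq k\}$; then $U\subseteq\overline{U\cap\lomz L}\subseteq\{\cpf\leq k\}$. For $\lomz w\in U\cap\lomz L$ the condition $\cpf(\lomz w)=\rdiff(w,\Succ{w})\leq k$ amounts to $w$ and $\Succ{w}$ agreeing at all positions $\geq k$. Given any $M\geq k$, continuity of $\odof$ at $s$ furnishes an $N\geq M$ with $\odof(\cyli{s_{[N,0]}})\subseteq\cyli{t_{[M,0]}}$; for $\lomz w\in\cyli{s_{[N,0]}}$ this yields directly $w_j=s_j=t_j=\Succ{w}_j$ throughout $k\leq j\leq M$.

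The main obstacle is extending agreement to positions $j>M$, since the modulus of continuity of $\odof$ at~$s$ only relates $N$-closeness of the input to $M$-closeness of the output with possibly $M<N$. The plan is to argue that, by choosing $M$ (hence $N$) large enough, the ``carry window'' of $\Succf$ on any $w$ with $\lomz w\in\cyli{s_{[N,0]}}$ is confined to positions $[0,k-1]$ -- the same window that $\Succf$ exhibits at~$s$. Intuitively, continuity of $\odof$ at $s$ encodes the fact that the effect of $\Succf$ stabilises at position $k-1$, and this stability transfers to $w$'s sharing enough low digits with~$s$: any disagreement $w_j\neq\Succ{w}_j$ at some $j>M$ would, via the zero-padding convention in $\lomz L$ combined with the shared tail $s_{[\infty,k]}=t_{[\infty,k]}$, prevent $\lomz\Succ{w}$ from converging to $t$ when $\lomz w\to s$, contradicting continuity of $\odof$ at~$s$. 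Once $\cyli{s_{[N,0]}}\cap\lomz L\subseteq\{\cpf\leq k\}$ is established, density yields $\cyli{s_{[N,0]}}\subseteq\{\cpf=k\}$, proving that $\cpf$ is constant equal to $k$ on this open cylinder.
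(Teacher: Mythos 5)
Your lower bound is correct and cleanly argued: each sublevel set $\{\Delta\leq k\}$ is closed in $\KcL\times\KcL$, so $\cpf$ is lower semicontinuous, and the concrete cylinder argument does force $\cpf\geq k$ near~$s$. The reduction of the upper bound to the dense subset $\lomz L$ via closedness of $\{\cpf\leq k\}$ is also valid and rather elegant. The problem is that the upper bound itself is never established: your last paragraph is, as you say yourself, a plan, and the plan does not work as described. The claimed contradiction --- that a disagreement $w_j\neq(\Succ{w})_j$ at some $j>M$ would ``prevent $\lomz\Succ{w}$ from converging to $t$'' --- fails in the product topology: convergence of $\lomz\Succ{w_n}$ to $t=\odof(s)$ only constrains finitely many low-order digits at a time, so a sequence $\lomz w_n\to s$ whose highest disagreement positions $j_n$ tend to infinity is perfectly compatible with $\odof(\lomz w_n)\to\odof(s)$. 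What your two continuity observations actually yield is that every \emph{finite} limit point of $\capr{w_n}$ equals $k$; they do not exclude a subsequence along which $\capr{w_n}\to+\infty$. Ruling that out needs some input beyond the continuity of $\odof$ at $s$, typically a combinatorial property of $\Succf$ itself (for $w\in L$ the highest position where $w$ and $\Succ{w}$ differ is the position where a digit is incremented, the common prefix above it being preserved), and no such input is actually brought to bear in your argument.

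For what it is worth, you have isolated exactly the step that the paper's own proof passes over: the paper takes $s_n\to s$, derives $s_{[j,0]}=(s_n)_{[j,0]}$ and $\odof(s)_{[j,0]}=\odof(s_n)_{[j,0]}$ for a fixed $j>\capr{s}$ and $n$ large, and then concludes $\capr{s_n}=\capr{s}$ --- which, by the same token, only controls the digits up to position $j$ and therefore only bounds the finite limit points. So your diagnosis of where the difficulty lies is accurate, and your framework (semicontinuity plus density) is a sound way to organize the proof; but the proposal is not a proof until the behaviour of $\Succ{w}$ \emph{above} the continuity window is genuinely controlled rather than asserted.
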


\begin{proof}
Indeed, let~$s$ in~$\KcL$ with $\capr{s}< + \infty$ and
$(s_{n})_{n\in\N}$ be a sequence of elements of $\KcL$ such that
$\dist{s,s_n}$ tends to $0$.
It means that $\ecar{s,s_n}$, the length of the longest common right
factor of~$s$ and~$s_n$, takes arbitrarily large values.  Since
$\odof$ is continuous, $\ecar{\tau_L(s),\tau_L(s_n)}$ is arbitrarily
large as well.
Let $j>\capr{s}$.
For large enough~$n$, $s_{[j,0]}=(s_n)_{[j,0]}$ and similarly
$\tau_L(s)_{[j,0]}=\tau_L(s_n)_{[j,0]}$.  
Thus $\capr{s}=\capr{s_n}$.
\end{proof}
 
If we write~$0= \lomz$, $\CP$, the carry propagation of~$L$, can be
written, if the limit exists, as:
\begin{equation}
\CP = \lim_{N\rightarrow\infty} \frac{1}{N} \sum_{i=0}^{N-1}
      \capr{\odof^{i}(0)}
      \eqvrg
\label{q.cp-erg-odo}
\notag
\end{equation}
which is the transformation of~\equnm{cp-erg-sum} we were aiming at in 
order to engage with the use of the Ergodic Theorem.

\begin{example}
Let~$p$ be an integer.
The completion~$\Kc_{p}$ of~$\lomz \Lp$ is the ring $\Z_p$ of the
\emph{$p$-adic integers} (a non-integral one if~$p$ is not a prime).

The ring~$\Z_p$ is a topological group, and the odometer~$\odof[p]$ is
the addition of~$1$, and thus a group rotation and a continuous
function.
By Proposition~\ref{p.odo-cont} the carry propagation~$\cpf[p]$ is 
continuous. 
The system $(\Kc_{p},\odof[p])$ is uniquely ergodic,
see~\cite{PythFogg02} for instance.
By applying the (second part of) Ergodic Theorem, we get an `ergodic
proof' of \propo{int-bas}.
\end{example}

\subsection{The dynamics of greedy numeration systems}
\label{s.dyn-gre-num}

In this section, we consider a case where the odometer is not defined
by continuity but rather by a combinatorial property of the
representation languages.
The greedy numeration systems have indeed the property that the
language of the representations of the natural integers is closed
under right-factors, and this will allow a meaningful definition of
the odometer, even when it is not continuous.
Our study is based on recent results due to Barat and
Grabner~\cite{BaraGrab16}.

\subsubsection{Greedy algorithm and greedy numeration systems}
\label{s.gre-num-sys}

Greedy numeration systems (GNS, for short) are a generalization of the
integer base numeration systems.
The base is replaced by a \emph{basis} (also called \emph{scale})
which is an infinite sequence of positive integers and which plays the
role of the sequence of the powers of the integer base.
The classical example is the Fibonacci numeration system where the 
basis consists of the sequence of Fibonacci numbers.
These systems have been first defined and studied in full generality 
by A.~Fraenkel~\cite{Frae85} and we have given large accounts on 
this subject in some previous works of 
ours~\cite{FrouSaka10,Rigo14}.

A \emph{basis} is a strictly increasing sequence of integers 
$\msp G = (G_\ell)_{\ell\in\N}$ with $G_0 =\nobreak 1$.
The \emph{greedy $G$-expansion} of a natural integer is the result of
a so-called \emph{greedy algorithm} --- described in this context
in~\cite{Frae85} --- for the definition of which we take a new
notation.
Given two integers~$m$ and~$p$, we write
$\msp \quot{m}{p} \msp$
and
$\msp \rest{m\,}{p} \msp$
for the \emph{quotient} and the \emph{remainder} of the Euclidean
division of~$m$ by~$p$ respectively.

\begin{definition}
\label{d.gre-alg}%
The \emph{greedy algorithm} goes as follows: given~$N$ in~$\N$, 

\thi let~$k$ be defined by the condition 
$\msp G_{k}\le N < G_{k + 1}\msp$.

\thii let
$\xmd x_k =\quot{N}{G_{k}}\xmd$ and 
$\xmd r_k =\rest{N}{G_{k}}\xmd$;

\thiii
for every~$i$, from~$i=k-1$ to~$i=0$, 
let~$\xmd x_i= \quot{r_{i+1}}{G_{i}}\xmd$ and 
$\xmd r_i = \rest{r_{i+1}}{G_{i}}\xmd$.

\smallskip
\noindent
We then have:
$\msp N = x_k\xmd G_k + x_{k-1}\xmd G_{k-1} + \cdots + x_0\xmd G_0$.  
\end{definition}

The sequence of digits 
$\msp x_{k}\xmd x_{k-1} \cdots x_{1}\xmd x_{0}\msp$ is called  
the \emph{(greedy) $G$-expansion} of~$N$ and is denoted 
by~$\reprG{N}$. 
The set of $G$-expansions is denoted by~$\LG$:
\begin{equation}
\LG=\Defi{\reprG{N}}{N\in\N}      
\eqpnt
\notag
\end{equation}
The language~$\LG$ is characterized by the following:
\begin{multline}
\e x_{k}\xmd x_{k-1} \cdots x_{0}\in\LG
\e\Longleftrightarrow\e\\
\forall i\quantvrg 0 \le i \le k\quantvrg \quantsmsp
x_{i}\xmd G_{i} + x_{i-1}\xmd G_{i-1} + \cdots + x_{0}\xmd G_{0}
< G_{i+1}
\eqpnt
\ee
\label{q.gre-exp-1}
\end{multline}
The $G$-expansion maps the natural order on~$\N$ onto the \emph{radix 
order} on~$\LG$, that is, 
$\msp N\le M\msp$ holds if and only if 
$\msp\reprG{N}\rad\reprG{M}\msp$ holds
and~$\LG$ may also be considered as an~ANS. 
\equat{gre-exp-1} becomes:
\begin{equation}
x_{k}\xmd x_{k-1} \cdots x_{0}\in\LG
\hspace{-.1em}
\e\Longleftrightarrow\e
\hspace{-.1em}
\forall i\quantvrg 0 \le i \le k\quantvrg \quantsmsp
\hspace{-.1em}
x_{i}\xmd x_{i-1} \cdots x_{0} \rad \reprG{G_{i+1}-1}
\, .
\label{q.gre-exp-2}
\end{equation}
It follows that for every~$\ell$ in~$\N$, we have:
\begin{equation}
G_{\ell} = \bfv[\LG]{\ell} 
\eqpnt
\notag
\end{equation}
For readability, we write $\msp g_{\ell} = \reprG{G_{\ell}-1}\msp$, 
and it follows from~\equnm{gre-exp-2} that
\begin{equation}
\Maxlg[\LG]=  \Defi{g_{\ell}}{\ell\in\N}
\eqpnt
\notag
\end{equation}

By construction, the language~$\LG$ satisfies \assum{ini-dig} and the
language~$0^{*}\LG$ is \emph{closed under right-factor}.
Note that~$\LG$ is not \pce in general (\cf \remar{gns-pce} below).

If the sequence of the quotients~$G_{\ell+1}/G_{\ell}$ of successive 
terms of~$G$ is bounded, with 
$\msp r=\limsup\lceil\frac{G_{\ell+1}}{G_{\ell}}\rceil\msp$,
then all $G$-expansions are words over the alphabet
$\msp A_{G}=\{0,1,\ldots,r-1\}\msp$.
In the following, we silently assume that this condition holds and 
that~$\LG$ is thus a language over the finite alphabet~$A_{G}$.
(For instance, we exclude GNS such as 
$\msp G=(\ell !)_{\ell\in\N}$.)

\subsubsection{Ergodicity of greedy numeration systems}
\label{s.erg-gre-num}%

Let~$G$ be a GNS.
We denote the successor function on~$\LG$ by~$\Succf[G]$ (rather 
than~$\Succf[\LG]$). 
The definition of the compactification of~$\LG$, which we denote
by~$\KcG$ (rather than~$\Kc_{\LG}$), takes a simpler form
since~$0^{*}\LG$ is closed under right-factor:
\begin{equation}
\KcG=\overline{\lomz\LG}=
\Defi{s\in\lomA}{\forall j\in\N \quantsp s_{[j,0]}\in 0^{*}\LG}
\eqpnt
\notag
\end{equation}
The same closure property by right-factor yields the definition of an
odometer.

\begin{theorem}[\cite{BaraGrab16,GrabEtAl95}]
\label{d.odo-gre}
Let~$G$ be a GNS.
For every~$s$ in~$\KcG$,
$\msp\lim_{j\to\infty} \Succ[G]{s_{[j,0]}}\msp$
exists and defines the odometer
$\msp\odof[G]\colon\KcG\rightarrow\KcG\msp$:
\begin{equation}
\fa s\in\KcG\quantsp
\odoG{s} =   \lim_{j\to\infty} \Succ[G]{s_{[j,0]}}
\eqpnt
\eee
\notag
\end{equation}
\end{theorem}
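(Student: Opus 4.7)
The plan is to prove two things: first, that for every $s \in \KcG$ the sequence $\bigl(\Succ[G]{s_{[j,0]}}\bigr)_{j \in \N}$ converges in $\lomA$; second, that the limit lies in $\KcG$. The second assertion follows automatically from the first: each $\Succ[G]{s_{[j,0]}}$ belongs to $\lomz \LG \subseteq \KcG$, and $\KcG$ is closed in $\lomA$ by definition. Convergence in the product topology on $\lomA$ is equivalent, via the metric $\distop$, to the pointwise stabilization of each digit: for every fixed position $N$, the $N$th digit of $\Succ[G]{s_{[j,0]}}$ should eventually become constant in $j$.

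I would first dispose of the trivial case where the sequence $N_j := \val[G]{s_{[j,0]}}$ is eventually constant (equivalently, $s$ has only finitely many non-zero digits): then $\Succ[G]{s_{[j,0]}}$ is itself eventually constant. The substantive case is $N_j \to +\infty$. Here I would exploit the defining property of greedy numeration systems recalled in \secti{gre-num-sys}, namely that $0^* \LG$ is closed under right-factor: this guarantees that each $s_{[j,0]}$ is itself the (possibly zero-padded) greedy expansion of $N_j$, so the digits of $\reprG{N_j}$ at positions $0$ through $N$ are precisely $s_0,\ldots,s_N$ for every $j \geq N$. The whole problem then reduces to the following stabilization lemma: for every $N$, there exists $J \geq N$ such that the rightmost $N+1$ digits of $\reprG{N_j + 1}$ coincide for all $j \geq J$.

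The main obstacle is that the greedy algorithm is not purely local: the renormalization rules associated with the basis $G$ (for instance, in the Fibonacci scale, $F_n + F_{n+1} = F_{n+2}$ or $2\xmd F_n = F_{n+1} + F_{n-2}$) can cause a high-position addition of $s_{j+1}\xmd G_{j+1}$ to cascade down into the low positions of $\reprG{N_j + 1}$. I would overcome this by arguing, following Barat--Grabner~\cite{BaraGrab16}, that only finitely many digit patterns can appear at positions $0$ through $N$ of $\reprG{N_j + 1}$ as $j$ varies, and that the set of visited patterns eventually reduces to a single one, which then constitutes the $N+1$ lowest digits of $\odoG{s}$. The finiteness is immediate, since the low part of any admissible representation lies in $[0, G_{N+1})$; the harder stabilization step uses the admissibility condition~\equnm{gre-exp-2} to show that, for $j$ large enough, any further extension of $s_{[j,0]}$ by a leading digit produces normalizations whose effect is confined to positions strictly above $N$, leaving the low $N+1$ digits untouched.
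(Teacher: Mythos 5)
The paper does not prove this theorem --- it cites \cite{GrabEtAl95} and \cite{BaraGrab16} for it --- so there is no in-text argument to compare yours against. Judged on its own terms, your sketch has the right outer structure (reduce to coordinatewise stabilisation of the words $\Succ[G]{s_{[j,0]}}$, then use closedness of $\KcG$), but the one step that carries the weight of the theorem is asserted rather than established. Your observation~(a), that only finitely many low-digit patterns can occur, is true but gives nothing: a sequence taking finitely many values need not be eventually constant. Your observation~(b), that for $j$ large the effect of appending a leading digit to $s_{[j,0]}$ is confined above position~$N$, is precisely what must be proved, and ``uses the admissibility condition to show'' is a placeholder, not an argument. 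Worse, the phrasing invites a false picture: the \emph{carry} in $\Succ[G]{s_{[j,0]}}$ itself can reach position~$0$ for every~$j$ (take $G$ the Fibonacci scale and $s=\lom{(0\xmd1)}$; then $\Succ[G]{s_{[j,0]}}$ is $1\xmd0^{j}$ or $1\xmd0^{j+1}$), so nothing about the successor computation is ``confined above~$N$''; what stabilises is the \emph{value} of the low digits, which here happen to become all zero.

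The ingredient you are missing is a monotonicity lemma for the carry length. Put $c_j=\cpG{s_{[j,0]}}$. Running the greedy algorithm top-down shows that $c_j-1$ is the largest index $i\in\{0,\ldots,j+1\}$ with $s_{[i-1,0]}=g_i$ (the maximal word of length $i$). Whether $s_{[i-1,0]}=g_i$ depends only on $s_0,\ldots,s_{i-1}$, so the set of such indices is fixed by $s$ and only the range $\{0,\ldots,j+1\}$ grows with $j$; hence $(c_j)_j$ is non-decreasing. A non-decreasing integer sequence either stabilises to some $c$ --- and then $\Succ[G]{s_{[j,0]}}=s_j\cdots s_{c}\xmd(s_{c-1}+1)\xmd0^{c-1}$ for all large $j$, which converges in $\lomA$ to $\cdots s_{c+1}\xmd s_{c}\xmd(s_{c-1}+1)\xmd0^{c-1}$ --- or tends to $+\infty$ --- and then, for every $N$, the rightmost $N+1$ digits of $\Succ[G]{s_{[j,0]}}$ are eventually all~$0$, so the limit is $\lomz$. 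This single lemma yields the stabilisation directly, absorbs your case split on whether $s$ has finitely many nonzero digits, and is exactly the content behind the sentence you left as a promissory note.
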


The carry propagation at~$s$ in~$\KcG$ is defined 
by~$\msp\cpG{s}=\rdiff{s,\odoG{s}}\msp$ as announced above,
and is denoted by~$\msp\cpG{s}\msp$ (rather than~$\capr[\LG]{s}$).
The carry propagation of~$\LG$, which we denote
by~$\CPG$ (rather than~$\CP[\LG]$) is then defined, when it exists, by 
(\cf \equnm{cp-erg-odo}):  
\begin{equation}
\CPG = \lim_{N\to\infty} 
           \frac{1}{N} \sum_{i=0}^{N-1} \cpG{\odofG^{i}(0)}
\eqpnt
\notag
\end{equation}

\begin{remark}
The language~$\LG$ is not necessarily a regular language.
But when it is, then

\thi the basis $G$ is a linear recurrent sequence, a result due to 
Shallit \cite{Shal94};

\thii the odometer $\odof[G]$ is continuous if and only if $\Succf[G]$
is realizable by a finite right sequential transducer~\cite{Frou97}.
We come back to this result and the definition of finite right
sequential transducers in the hopefully forthcoming sequel of this
work~\cite{CCSF-Part2}.
\end{remark}

The odometer~$\odofG$ may be continuous or not, as 
shown in Examples~\ref{e.Fib-erg} and~\ref{e.Fina-erg} below.
Resorting to the Ergodic Theorem requires some further hypothesis as well
as some technical developments.

\begin{definition}
A GNS~$G$ is said to be \emph{exponential} if 
it is equivalent to a sequence which is homothetic to a geometric 
progression, that is, if 
there exist two real constants~$\alpha >1$ and~$C>0$ such that
$\msp G_{\ell}\sim C\alpha^{\ell}\msp$ when~$\ell$ tends to infinity.
\end{definition}
 
Exponential GNS are of interest to us because of the following result. 

\begin{theorem}[\cite{BaraEtAl02}, Theorem~8]%
\label{t.exp-uni-erg}
If~$G$ is an exponential GNS, then the dynamical 
system~$(\KcG,\odofG)$ is \emph{uniquely ergodic}.
\end{theorem}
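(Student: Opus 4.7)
The plan is to establish unique ergodicity via the criterion that $(\KcG, \odofG)$ admits exactly one $\odofG$-invariant Borel probability measure. Existence is automatic: $\KcG$ is compact, $\odofG$ is Borel measurable as a pointwise limit of maps determined by cylinder membership, and Krylov--Bogolyubov yields at least one $\odofG$-invariant measure. The real work is to produce a canonical candidate and to show that no other invariant measure can exist.

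For the candidate, I would exploit that $\odofG^{i}(0)=\lomz\xmd\reprG{i}$ for every $i\in\N$, so that the distribution of the orbit of $0$ is controlled by the statistics of $G$-expansions. For a cylinder $\cyliw$ with $w\in 0^{*}\LG$ of length $\ell$, I would set
\begin{equation}
\muG(\cyliw) \;=\; \lim_{n\to\infty}\; \frac{1}{G_n}\xmd
\jsCard{\Defi{i<G_n}{w \text{ is a right factor of } \lomz\xmd\reprG{i}}}
\eqpnt
\notag
\end{equation}
The exponential assumption $G_n\sim C\alpha^n$ is exactly what makes this limit exist: the set counted is in bijection, up to bounded edge effects, with the integers admitting $w$ as a suffix of a valid expansion of length at most $n$, whose cardinality is of order $G_{n-\ell}$; dividing by $G_n$ yields a limit of order $\alpha^{-\ell}$. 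A standard check shows that $\muG$ is countably additive on the algebra generated by cylinders, hence extends by Carath\'eodory to a Borel probability measure, and is $\odofG$-invariant, the invariance on cylinders reducing to the right-factor closure of $0^{*}\LG$ and the combinatorial description of $\odofG^{-1}(\cyliw)$.

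The main obstacle is uniqueness, which normally would follow from the Oxtoby criterion combined with continuity of the action; however, as later examples show, $\odofG$ can be discontinuous. I would overcome this by arguing that the set $D$ of discontinuity points of $\odofG$ sits inside a countable union of cylinders whose lengths tend to infinity, so that the $\alpha^{-\ell}$-type decay forces $\nu(D)=0$ for every $\odofG$-invariant Borel probability measure $\nu$. On the full-measure set where $\odofG$ behaves continuously on a neighbourhood of each iterate, the classical equidistribution argument applies and forces, for any invariant $\nu$ and any cylinder $\cyliw$, the value $\nu(\cyliw)$ to coincide with the Birkhoff average of $\mathbf{1}_{\cyliw}$ along the orbit of $0$, which by construction equals $\muG(\cyliw)$. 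A monotone class argument then promotes the equality $\nu=\muG$ from cylinders to all Borel sets, and unique ergodicity follows.
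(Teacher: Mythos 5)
This theorem is imported: the paper cites it from Barat, Downarowicz, and Liardet (\cite{BaraEtAl02}, Theorem~8) and does not re-prove it, so there is no in-paper argument to compare your attempt against; I can only assess the attempt on its own terms, and it contains two real gaps. The first concerns existence of your candidate $\muG$: you take for granted that the cylinder-by-cylinder limit defining $\muG(\cyliw)$ converges, but exponentiality of~$G$ alone does not give this. The number of admissible left prolongations of~$w$ within $0^{*}\LG$ of a given length depends on the greedy admissibility condition, and your ``up to bounded edge effects'' estimate at best pins down the order of magnitude, not an actual limit --- different subsequences of~$n$ could a priori concentrate differently. In fact this very convergence is the content of Barat--Grabner's Theorem~2, stated in the present paper as \theor{gen-0}, where it is derived \emph{from} unique ergodicity; taking it as an input would be circular, and proving it directly is itself substantial work your proposal does not supply.

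The second and more serious gap is in the uniqueness step. Granting $\nu(D)=0$ for every invariant~$\nu$ (plausible, since $D$ sits inside $\bigcap_{N}\bigcup_{\ell\geq N}\cyli{g_{\ell}}$ and \propo{cyl-siz} controls those cylinders), you then invoke ``the classical equidistribution argument'' to force $\nu(\cyliw)$ to equal the Birkhoff average of $\Charf{\cyliw}$ along the orbit of~$\lomz$. That step fails: Birkhoff's theorem yields convergence of orbit averages only for $\nu$-almost every starting point, and the single point $\lomz$ has $\nu$-measure zero under any non-atomic invariant~$\nu$, so nothing forces its orbit to lie in the good set for~$\nu$, let alone forces its averages to converge to $\int\Charf{\cyliw}\,d\nu$. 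Unique ergodicity is equivalent (Oxtoby) to \emph{uniform} convergence of Birkhoff averages over \emph{all} initial points, and that uniformity is exactly what continuity of~$\odofG$ would have supplied; excising a $\nu$-null discontinuity set does not restore it. The proof in the cited reference instead exploits the tower (adic) structure of the GNS odometer, in which the orbit segment of an arbitrary point passes through all admissible length-$\ell$ words essentially the same number of times, with exponential growth of~$G$ controlling the boundary corrections uniformly in the base point --- a mechanism your plan does not engage.
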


If~$G$ is an exponential GNS, the unique $\odofG$-invariant measure 
on~$\KcG$ is denoted by~$\mu_{G}$.
Since~$\odofG$ is not necessarily continuous, and even though the 
system~$(\KcG,\odofG)$ is uniquely ergodic, we only have the first 
part of Ergodic \theor{erg} at hand.
The following definitions and results, again borrowed
from~\cite{BaraGrab16}, are used in the proof of \theor{cp-erg} we are
aiming at and which amounts indeed to the proof that~$0$ is contained
in the set of $\mu_{G}$-almost all points for which~\equnm{cp-erg-int}
holds.

We first have an evaluation of the measure of the cylinders generated
by the maximal words, that holds without the assumption of 
exponentiality.

\begin{proposition}[\cite{BaraGrab16}, Eq.~4.8]%
\label{p.cyl-siz}%
Let~$G$ be a GNS and~$\msp\mu\msp$ a $\odofG$-invariant measure.
Then, for every~$\ell$ in~$\N$, we have:
\begin{equation}
\mu(\cyli{g_{\ell}}) \leq 1/G_{\ell}
\eqpnt
\notag
\end{equation}
\end{proposition}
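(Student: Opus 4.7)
The plan is to exhibit $G_\ell$ pairwise disjoint subsets of $\KcG$ all having the same $\mu$-measure as $\cyli{g_\ell}$, so that their union forces $G_\ell\cdot\mu(\cyli{g_\ell})\le 1$. The $G_\ell$ sets will be the successive $\odofG$-preimages of $\cyli{g_\ell}$.

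First, for each integer $i$ with $0\le i\le G_\ell-1$, I would denote by $w_i$ the word of length exactly $\ell$ obtained by left-padding $\reprG{i}$ with $0$'s; since $G_\ell$ is by definition the number of $G$-representations of length $\le\ell$, the map $i\mapsto w_i$ is a bijection onto the length-$\ell$ words of $0^*\LG$. In particular $w_{G_\ell-1}=g_\ell$, and the cylinders $C_i=\cyli{w_i}$ are pairwise disjoint because they are generated by distinct words of the same length.

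The heart of the argument is to show that $\odofG^{-k}(\cyli{g_\ell})=C_{G_\ell-1-k}$ for every $k$ with $0\le k\le G_\ell-1$. The key observation is that if $s\in\KcG$ has last $\ell$ digits forming $w_v$ and $v+k<G_\ell$, then $\odofG^{k}(s)$ has last $\ell$ digits forming $w_{v+k}$: each iteration of the odometer adds one to the value represented by those $\ell$ positions, and as long as the value stays strictly below $G_\ell$ no carry escapes position $\ell-1$. This yields $\odofG^{k}(C_{G_\ell-1-k})\subseteq\cyli{g_\ell}$; conversely, since $k\le G_\ell-1$, the only value $v\in\{0,\ldots,G_\ell-1\}$ for which the last $\ell$ digits of $\odofG^k(s)$ can form $g_\ell$ is $v=G_\ell-1-k$, any wrap-around solution requiring $v\ge G_\ell$.

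The conclusion is then immediate from invariance: each $\odofG^{-k}(\cyli{g_\ell})$ has $\mu$-measure equal to $\mu(\cyli{g_\ell})$, and since the preimages are the pairwise disjoint $C_{G_\ell-1-k}$, we get
\begin{equation}
G_\ell\cdot\mu(\cyli{g_\ell})
 =\sum_{k=0}^{G_\ell-1}\mu\bigl(\odofG^{-k}(\cyli{g_\ell})\bigr)
 \le\mu(\KcG)=1.
\notag
\end{equation}
The main technical obstacle I foresee is the analysis of the odometer in the central claim: making precise, from its definition as a pointwise limit of $\Succ[G]{s_{[j,0]}}$, that iterated application really does act on the last $\ell$ digits exactly as integer addition without carry-out, as long as the represented value stays below $G_\ell$. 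This should follow from the right-factor-closure of $0^*\LG$, which ensures that for $j$ large enough the greedy successor of $s_{[j,0]}$ leaves positions $\ell,\ldots,j$ untouched precisely in this regime.
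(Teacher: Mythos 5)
Your strategy---push $\cyli{g_\ell}$ back through $G_\ell$ preimages, invoke $\odofG$-invariance, and conclude from disjointness---is the right one, but the ``key observation'' at its heart is false. The odometer does \emph{not} act on the last $\ell$ digits as addition modulo $G_\ell$: a carry can escape position $\ell-1$ even when the value of those $\ell$ digits is strictly below $G_\ell-1$, because the greedy constraint at position $\ell$ (or higher) may already be tight. Concretely, take the Fibonacci system with $\ell=2$, so $G_2=3$, $g_2=10$ and $w_1=01$. The point $s=\lomz 101$ of $\KcG$ has $s_{[1,0]}=01=w_1$, i.e.\ $v=1$ and $v+1<G_2$; your observation predicts that $\odoG{s}$ has last two digits $w_2=g_2=10$. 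In fact $\odoG{\lomz 101}=\lomz 1000$, whose last two digits are $00=w_0$. So $C_1\not\subseteq\odofG^{-1}(\cyli{g_2})$, and in general $\odofG^{-k}(\cyli{g_\ell})$ is a \emph{proper} subset of $C_{G_\ell-1-k}$, not equal to it as you assert; the ``converse'' half of your central claim is also justified by the same false picture of mod-$G_\ell$ arithmetic.

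The argument is repairable, because the one-sided inclusion $\odofG^{-k}(\cyli{g_\ell})\subseteq C_{G_\ell-1-k}$ does hold and already gives the pairwise disjointness you need, but it requires a correct description of the dynamics. Writing $v(s)$ for the integer represented by $s_{[\ell-1,0]}$, one odometer step sends $v$ to $v+1$ when $\cpG{s}\le\ell$ and resets it to $0$ when $\cpG{s}>\ell$: indeed the greedy constraint forces positions $0,\dots,\cpG{s}-2$ of $\odoG{s}$ to be $0$, and $\cpG{s}-1\ge\ell$. In particular $v$ never increases by more than one per step. Now if $s\in\cyli{g_\ell}$ then $v(s)=G_\ell-1$ forces $\cpG{s}>\ell$, hence $v(\odoG{s})=0$, and then $v(\odofG^{j}(s))\le j-1<G_\ell-1$ for every $1\le j<G_\ell$. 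Thus $\cyli{g_\ell}\cap\odofG^{-j}(\cyli{g_\ell})=\emptyset$ for $1\le j<G_\ell$, the $G_\ell$ sets $\odofG^{-k}(\cyli{g_\ell})$ with $0\le k<G_\ell$ are pairwise disjoint, and your measure inequality follows exactly as you wrote. (For the record, the paper does not prove this proposition itself but cites it to Barat and Grabner.)
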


\begin{notation*}
For every~$i$ in~$\N$, 
we denote by~$\msp\delta_{i}\msp$ the \emph{Dirac measure} 
at point $\reprG{i}=\nobreak{\odofG^i(0)}$ of~$\KcG$ and, 
for every~$N$ in~$\N$, by~$\msp\nu_{N}\msp$ 
the mean of these measures on the `first'~$N$ points of~$\KcG$:
\begin{equation}
    \nu_{N}=\frac{1}{N} \sum_{i=0}^{N-1} \delta_{i}
    \eqpnt 
    \notag
\end{equation}
We write~$\Charf{S}$ for the \emph{characteristic function} of a 
subset~$S$ of~$\KcG$. 
We thus have, for any~$w\in\LG$:
\begin{equation}
\nu_{N}(\cyliw)=
    \frac{1}{N} \sum_{i=0}^{N-1}\Char{\cyliw}{\odofG^i(0)}
\eqpnt
\notag
\end{equation}
\end{notation*}

The main result of~\cite{BaraGrab16} we rely on is the expression 
of~$\mu_{G}$ in terms of the~$\nu_{N}$. 

\begin{theorem}[\cite{BaraGrab16}, Theorem~2]
\label{t.gen-0}
Let~$G$ be a GNS and $(\KcG,\odofG)$ the associated dynamical system.
If~$(\KcG,\odofG)$ is uniquely ergodic with measure~$\mu_{G}$, then
for every~$w$ in~$\LG$, we have:
\begin{equation}
\lim_{N\rightarrow\infty}\xmd\nu_{N}(\cyliw)=\mu_G(\cyliw)
\eqpnt
\notag
\end{equation}
\end{theorem}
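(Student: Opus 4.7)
The plan is to establish weak-* convergence of the sequence $(\nu_N)$ to the unique invariant measure $\mu_G$ in the space $\mathcal{M}(\KcG)$ of probability measures on $\KcG$. Once this is obtained, the conclusion follows immediately: each cylinder $\cyli{w}$ is clopen in $\KcG$ (it is a member of the topological base, and its complement is a finite union of cylinders of the same length, hence closed), so $\Char{\cyli{w}}{\cdot}$ is a continuous function on $\KcG$, and weak-* convergence translates into the desired numerical convergence $\nu_N(\cyli{w}) \to \mu_G(\cyli{w})$.

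I would organize the argument in three steps. First, $\KcG$ is compact metric and so $\mathcal{M}(\KcG)$ is weak-* compact by Banach--Alaoglu; thus $(\nu_N)$ has weak-* accumulation points, and by the unique ergodicity hypothesis it suffices to show that every such accumulation point $\mu^*$ is $\odofG$-invariant. Second, for any continuous $f \colon \KcG \to \R$, the classical telescoping identity $\int (f \circ \odofG - f)\, d\nu_N = \frac{1}{N}(f(\odofG^N(0)) - f(0))$ tends to $0$, so invariance would follow if one could pass to the limit along a subsequence $\nu_{N_k} \to \mu^*$, i.e.\ if $\nu_{N_k}(f \circ \odofG) \to \mu^*(f \circ \odofG)$. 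By the Portmanteau theorem, this passage is legitimate as soon as $f \circ \odofG$ is continuous $\mu^*$-almost everywhere. Third, $\mu^* = \mu_G$ by unique ergodicity, so $\nu_N \to \mu_G$ weakly, and the clopen property of $\cyli{w}$ closes the argument.

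The main obstacle is the second step, because $\odofG$ is not continuous in general and one must control its discontinuity set $D$. For a greedy system, $D$ is contained in $\bigcap_{\ell\in\N} \cyli{g_\ell}$, the set of sequences whose arbitrarily long right-factors are maximal words --- precisely the points where the carry would propagate to infinity. The key analytical input is a combinatorial counting bound of the form $\nu_N(\cyli{g_\ell}) \leq C/G_\ell + o_N(1)$, obtained in the spirit of \propo{cyl-siz} by analysing how often the $G$-expansions of the first $N$ integers can end in the maximal block $g_\ell$. Passing to the limit along the subsequence gives $\mu^*(\cyli{g_\ell}) \leq C/G_\ell$ for every $\ell$, and since $G_\ell \to \infty$ in any GNS satisfying our standing assumptions, intersecting yields $\mu^*(D) = 0$. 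With $\mu^*(D) = 0$, Portmanteau applies to $f \circ \odofG$ (which is continuous off $D$), invariance of $\mu^*$ is established, and the overall argument goes through.
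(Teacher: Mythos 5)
A preliminary remark: the paper does not prove this statement at all --- it is imported from Barat--Grabner, and the remark that follows it in the text only explains how the stated form is extracted from their more general Theorem~2. So there is no in-paper proof to compare with; what can be assessed is your reconstruction. Its skeleton (weak-$*$ compactness of the empirical measures, invariance of cluster points via the telescoping identity, unique ergodicity to identify the limit, and clopen-ness of cylinders to convert weak-$*$ convergence into numerical convergence) is the standard and correct route.

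The gap sits exactly where all the difficulty of the theorem is concentrated: the control of the discontinuity set $D$ of $\odofG$. Your containment $D\subseteq\bigcap_{\ell}\cyli{g_{\ell}}$ is false, and the paper itself supplies a counterexample: in the Fina system (\examp{Fina-erg}) one has $g_{\ell}=2\xmd1^{\ell-1}$, so the $g_{\ell}$ are not nested under the right-factor order and $\bigcap_{\ell}\cyli{g_{\ell}}=\emptyset$; yet $\odofG$ is discontinuous at $\lom{1}$, a point at which the carry propagation is moreover finite (equal to~$1$). The same example defeats the charitable reading of your verbal description (``arbitrarily long right-factors are maximal words''), i.e.\ $\limsup_{\ell}\cyli{g_{\ell}}$, since no $g_{m}$ is a right-factor of $\lom{1}$. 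The actual discontinuity set is $\bigcap_{M}\overline{\bigcup_{m>M}\cyli{g_{m}}}$ --- the points approximable by points of $\cyli{g_{m}}$ with $m$ arbitrarily large --- and showing it is $\mu^{*}$-null raises two issues your sketch does not address: (i) bounding $\mu^{*}\bigl(\bigcup_{m>M}\cyli{g_{m}}\bigr)$ by $\sum_{m>M}C/G_{m}$ requires the summability of $1/G_{m}$, not merely $G_{m}\to\infty$, and the theorem as stated assumes neither exponentiality nor condition (ii) of \theor{cp-erg}; (ii) the set to be killed is the \emph{closure} of that open union, and the Portmanteau inequalities give lower, not upper, bounds for $\mu^{*}$ of closed sets, so the boundary must be handled separately --- for instance by replacing $f\circ\odofG$ by its restrictions to the closed sets $F_{k}$ on which $\odofG$ is continuous and estimating $\nu_{N}(\KcG\setminus F_{k})$ directly by counting, in the spirit of what \secti{pro-cp-erg} does for $\cpfG$. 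As it stands, your argument would ``prove'' that the discontinuities are negligible even in systems where the intersection you exhibit is empty while the discontinuity set is not, so the key analytic step is missing rather than merely under-detailed.
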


\begin{remark}
The determined reader who refers himself to \cite{BaraGrab16} will 
hardly recognize \theor{gen-0} there.
Indeed, Theorem 2 in~\cite{BaraGrab16} is much more general and 
says, roughly, that \emph{any invariant measure} on~$\KcG$ is a 
\emph{cluster point of sequences} of convex combinations of 
the~$\nu_{N}$'s.
If~$\KcG$ is uniquely ergodic, then~$\mu_{G}$ is the only possible 
cluster point, and, on the other hand, the limit of the sequence of 
the~$\nu_{N}$'s is a cluster point.
\end{remark}

\begin{remark}
One may say that \theor{gen-0}, or the convergence of the $\nu_{N}$'s 
toward~$\mu_{G}$, expresses the property that~$0$ is a \emph{generic 
point} of the system~$(\KcG, \odofG)$ in the sense that
the measure of any cylinder~$\cyliw$ is obtained as 
the limit of the \emph{statistics} induced by the orbit of~$0$ 
under~$\odofG$, or, to put it in a way that is more congruent with 
the Ergodic Theorem, we have:
\begin{equation}
	    \fa w \in A_{G}^{\,*}\quantsp
     \int_{\KcG} \Charf{\cyliw} d\mu_{G} =
     \lim_{N\rightarrow\infty}\xmd
     \frac{1}{N} \sum_{i=0}^{N-1}\Char{\cyliw}{\odofG^i(0)}
\eqvrg
\eee
\notag
\end{equation}
%
This \emph{does not imply} that
\begin{equation}
    \fa f\in\Lone{\mu_{G}}\quantsp
     \int_{\KcG} f d\mu_{G} =
     \lim_{N\rightarrow\infty}\xmd
     \frac{1}{N} \sum_{i=0}^{N-1}f(\odofG^i(0))
\eqpnt
\eee
\label{q.pro-leg-1}
\end{equation}
It suffices to take 
$\msp f = \Charf{\Omega(0)}\msp$
the characteristic function of the orbit of~$0$:
the left-hand side is~$0$ since the domain of~$\Omega(0)$ is 
denumerable and the mean of the sum in the right-hand side is 
uniformly equal to~$1$, hence has limit~$1$.

In order to prove \theor{cp-erg} below, we have to prove that the 
function~$\msp\cpfG\msp$ is somehow \emph{regular enough} to 
guarantee~\equnm{pro-leg-1}.
\equat{pro-leg-1} holds for any \emph{Riemann-integrable} 
function~$f$.
On the other hand, $\Charf{\Omega(0)}$ is typical of a function that 
is Lebesgue-integrable but not Riemann-integrable.
The function~$\cpfG$ is not Riemann-integrable either, since it is 
\emph{unbounded}.
It should be treated as an \emph{improper integral}.
\end{remark}

\subsubsection{Carry propagation in greedy numeration systems}

We are now in a position to give an ergodic proof of the existence of 
the carry propagation for a family of greedy numeration systems.

\begin{theorem}
\label{t.cp-erg}%
Let $G$ be a GNS that meets the following two conditions:

\thi $(\KcG,\odofG)$ is uniquely ergodic;

\thii $\displaystyle{\sum_{k=0}^{+\infty} \frac{k}{G_{k}}}$
is bounded.

\noindent
Then, the carry propagation
$\msp\displaystyle{\CPG = 
        \lim_{N\rightarrow\infty} \meanEi{\cpfG}{N}}\msp$
exists.
\end{theorem}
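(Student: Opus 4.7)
The plan is to decompose the unbounded and generally discontinuous function $\cpfG$ as a pointwise infinite sum of indicators of level sets that turn out to be finite unions of cylinders, apply the generic-point Theorem~\ref{t.gen-0} to each cylinder, and then justify the interchange of the infinite sum with the limit in~$N$ using hypothesis~(ii). Since $\cpfG$ is unbounded and $\odofG$ need not be continuous, the classical ergodic statement for continuous observables does not apply directly.

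I would first write, for every $s\in\KcG$,
\begin{equation}
    \cpG{s}=\sum_{k\geq 0} \Char{E_k}{s}
    \ee\text{with}\ee
    E_k=\Defi{s\in\KcG}{\cpG{s}>k}\eqvrg
    \notag
\end{equation}
so that $\meanEi{\cpfG}{N}=\sum_{k\geq 0}\nu_N(E_k)$. Next I would argue that each $E_k$ is a \emph{finite union of cylinders}: the carry cascade in a greedy numeration system is a finite process, so whether $\odofG$ changes the digit of $s$ at position $k$ or higher is determined by a finite suffix of $s$ of bounded length. Theorem~\ref{t.gen-0}, applied to every component cylinder of $E_k$, then gives for each fixed $k$ the cylinder-level convergence $\lim_{N\to\infty}\nu_N(E_k)=\mu_G(E_k)$.

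The main step is to derive uniform tail estimates enabling dominated convergence over $k$. Proposition~\ref{p.cyl-siz} bounds the measure of the maximal cylinder at level $\ell$ by $1/G_\ell$, and a combinatorial analysis of the carry cascade --- controlling the number of component cylinders in the disjoint decomposition of $E_k$ and bounding the measure of each by $1/G_k$ --- yields an estimate of the form $\mu_G(E_k)\leq C\,k/G_k$. A companion counting argument exploits the fact that the suffix of $\reprG{i}$ at positions $\geq k$ is non-decreasing in $i$ and changes at most $O(N/G_k)$ times as $i$ ranges over $\{0,1,\ldots,N-1\}$, yielding a uniform estimate $\nu_N(E_k)\leq C'(k/G_k+1/N)$. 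Hypothesis~(ii) $\sum_k k/G_k<+\infty$ provides a summable dominating sequence, and dominated convergence for series delivers
\begin{equation}
    \lim_{N\to\infty}\sum_{k\geq 0}\nu_N(E_k)
    =\sum_{k\geq 0}\mu_G(E_k)
    =\int_{\KcG}\cpfG\,d\mu_G=\CPG\eqpnt
    \notag
\end{equation}

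The main obstacle is the combinatorial bookkeeping underlying the tail bound. For the integer-base system $E_k$ is the single cylinder $\cyli{(p-1)^k}$ and the estimate is immediate. For Fibonacci and more general greedy numeration systems, the branching structure of carry cascades makes the precise identification of $E_k$ as a finite union of cylinders, and the verification that this union has total measure bounded by $C\,k/G_k$, the technical heart of the proof, and it is there that the summability hypothesis~(ii) enters decisively.
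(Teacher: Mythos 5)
Your overall plan — truncate $\cpfG$, apply the generic-point theorem (\theor{gen-0}) to cylinder pieces, and control the tail uniformly in~$N$ using hypothesis~(ii) so that the two limits can be exchanged — is exactly the strategy of the paper. But there are two real problems with the way you execute it.

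First, the claim that each $E_k=\{s : \cpG{s}>k\}$ is a \emph{finite} union of cylinders is false in general, and the reasoning you give for it (``the carry cascade \ldots is determined by a finite suffix of bounded length'') is precisely what fails: the cascade length is not bounded over $s\in\KcG$. In the paper's notation one has $E_k=\bigcup_{m\ge k}\cyli{g_m}$, which reduces to a finite union only when almost every $g_m$ has some earlier $g_{m'}$ ($k\le m'<m$) as a right-factor. \examp{dom-ergo} describes a family of GNS (those where no $g_{m'}$ is ever a right-factor of $g_m$, $m'<m$) in which all the cylinders $\cyli{g_m}$, $m\ge k$, are pairwise disjoint and none is redundant, so $E_k$ is an irreducibly infinite union. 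Consequently you cannot ``apply \theor{gen-0} to every component cylinder of $E_k$'' and conclude $\nu_N(E_k)\to\mu_G(E_k)$ in one stroke; the exchange of $\lim_N$ with the infinite sum over $m$ is itself another instance of the very tail-control problem you are trying to solve. The paper avoids this by working with the \emph{level} sets $D_k=\{\cpG{s}=k+1\}$ and the truncations $f_k=\cpfG\cdot\Charf{F_k}$ with $F_k=\bigcup_{j\le k}D_j$, and then never needing a direct convergence statement for the infinite-union tail at all: the tail is controlled purely combinatorially.

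Second, the uniform-in-$N$ tail estimate you invoke ($\nu_N(E_k)\le C'(k/G_k+1/N)$, via the heuristic that ``the suffix of $\reprG{i}$ at positions $\ge k$ changes at most $O(N/G_k)$ times'') is morally the right idea, but it is exactly the technical heart of the proof and you leave it as a sketch. Making it rigorous requires exploiting the self-similar structure of greedy expansions: the paper does this via \corol{gre-alg} (which says $\cpG{N}=\cpG{\rest{N}{G_k}}$ away from boundary points), the notion of $(H,L)$-extension (\defin{HL-ext}), the elementary arithmetic \lemme{mean}, and \propo{cp-erg-2}, which delivers the clean uniform bound $\sum_{i<N}f_k(i)\le\sum_{i<N}f_{k-1}(i)+\lfloor N/G_k\rfloor(k+1)$, equivalently $\nu_N(D_k)\le 1/G_k$ \emph{for all} $N$. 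Summed, this gives $\meanEi{f_k}{N}\le\meanEi{\cpfG}{N}\le\meanEi{f_k}{N}+M_{k+1}$ for all $N$ and $k$, which is the sandwich one needs before letting $N\to\infty$ (using \propo{erg-the-fk}) and then $k\to\infty$ (using hypothesis~(ii)). That bootstrapping argument — proving the bound on an initial block $[0,G_{k+1}[$ by \propo{cp-erg-1} and then propagating it to all of $\N$ via the $(H,L)$-extension machinery — is the part your proposal acknowledges as ``the main obstacle'' but does not supply.
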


As a consequence of \theor{exp-uni-erg}, and since condition~(ii) is 
obviously satisfied by an exponential GNS, we have:

\begin{corollary}
\label{c.exp-uni-erg-2}%
If $G$ is an exponential GNS, then~$\CPG$ exists.  
\end{corollary}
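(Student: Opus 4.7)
The approach is to decompose $\cpfG$ as a countable sum of indicators of cylinder sets, apply the generic-point property (\theor{gen-0}) to each, and then interchange limit and sum by a dominated-convergence argument in the discrete index $k$, with hypothesis~(ii) supplying the summable majorant.

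First I would write
\begin{equation}
    \cpfG(s) = \sum_{k=1}^{\infty}\Charf{E_k}(s)\EqVrgInt
    \ \ E_k = \Defi{s \in \KcG}{\cpG{s} \geq k}\EqVrg
    \notag
\end{equation}
and show that each $E_k$ is a finite disjoint union of depth-$k$ cylinders, $E_k = \bigsqcup_{w \in B_k}\cyli{w}$ with $B_k \subseteq A_G^{\,k}$. Indeed, in a GNS the bottom $k$ digits of $\odoG{s}$ are determined by the bottom $k$ digits of $s$ alone: the greedy carry either stays within positions $0, \ldots, k-1$ (so higher digits are untouched), or overflows past position~$k-1$, in which case the bottom $k$ digits of $\odoG{s}$ are $0^{k}$ and position~$k$ necessarily changes. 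Combined with the observation that $\cpG{s} \geq k$ amounts to a disagreement between $s$ and $\odoG{s}$ at some position~$\geq k-1$, this makes membership in $E_k$ a function of $s_{[k-1,0]}$ only. \theor{gen-0} then yields $\lim_{N \to \infty}\nu_{N}(E_k) = \mu_{G}(E_k)$ for each fixed $k$, where $\mu_{G}$ is the unique invariant measure granted by hypothesis~(i).

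Since the Birkhoff average rewrites as $\frac{1}{N}\sum_{i=0}^{N-1}\cpG{\odofG^i(0)} = \sum_{k \geq 1}\nu_{N}(E_k)$, the proof reduces to interchanging $\lim_N$ with $\sum_k$. For this I would establish a uniform bound $\nu_{N}(E_k) \leq a_k$ with $\sum_k a_k < \infty$. A combinatorial count in the language tree $\Tc_G$, modelled on the bookkeeping of \corol{sum-cp}, together with \propo{cyl-siz}, yields $\nu_{N}(\cyli{w}) \leq C/G_k$ uniformly in $N$ for every $w \in B_k$; one moreover checks that $|B_k| = O(k)$ in any GNS, since each element of $B_k$ records a "critical" overflow configuration among the positions $0, \ldots, k-1$ along the greedy increment. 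This gives $\nu_{N}(E_k) \leq C k/G_k$, and hypothesis~(ii) is precisely the finiteness of $\sum_k k/G_k$.

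The principal obstacle is this uniform estimate. Unique ergodicity alone (hypothesis~(i)) does not suffice: since $\odofG$ is generally discontinuous, the continuous form of the Ergodic Theorem (\theor{erg}) cannot be applied directly to the unbounded, non-continuous function $\cpfG$. Hypothesis~(ii) supplies the quantitative input that compensates for this discontinuity: it guarantees both that $\cpfG \in L^{1}(\mu_G)$ and that the empirical measures~$\nu_N$ charge only a controlled mass to the tail events $E_k$ uniformly in~$N$, which is exactly what dominated convergence in the discrete index~$k$ demands. Once the interchange is justified, one concludes
\begin{equation}
    \CPG = \sum_{k=1}^{\infty}\mu_G(E_k) = \int_{\KcG}\cpfG\xmd d\mu_G\EqPnt
    \notag
\end{equation}
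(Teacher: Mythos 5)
The crux of your argument is the claim that $E_k = \{s : \cpG{s}\ge k\}$ is a finite disjoint union of depth-$k$ cylinders, equivalently that membership in $E_k$, and the bottom $k$ digits of $\odoG{s}$, depend only on $s_{[k-1,0]}$. This is a ripple-carry intuition, correct for integer bases, but false for general GNS: the greedy algorithm (\defin{gre-alg}) assigns digits from the \emph{most} significant position downward, so passing from $\reprG{N}$ to $\reprG{N+1}$ is a global recomputation, not a local one. From the characterization of $\cpfG$ stated just before \equnm{erg-cp-1} one has $E_k = \bigcup_{j\ge k-1}\cyli{g_j}$, an infinite union of cylinders of unbounded depth; it collapses to a finite union of depth-$k$ cylinders only when the maximal words $g_j$ have earlier $g_{j'}$'s as right-factors in a controlled way (as in Parry $\beta$-numeration), and this fails for many exponential GNS.

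A concrete counterexample is the exponential basis $G_\ell = 2^{\ell+1}-1$, for which $g_j = \reprG{G_j-1} = 2\xmd 0^{j-1}$ for all $j\ge 1$. The cylinders $\cyli{g_j}$ are then pairwise disjoint, so $E_2 = \bigcup_{j\ge 1}\cyli{g_j}$ is not a union of depth-$2$ cylinders: taking $s = \lomz 0\xmd 0$ and $s' = \lomz 2\xmd 0^{j-1}$ with $j\ge 3$, one has $s_{[1,0]} = s'_{[1,0]} = 0\xmd 0$ yet $\cpG{s} = 1$, $\cpG{s'} = j+1$, and $(\odoG{s})_{[1,0]} = 0\xmd 1 \ne 0\xmd 0 = (\odoG{s'})_{[1,0]}$. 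This failure of locality is exactly why the paper's proof of \theor{cp-erg} does not reduce to a dominated-convergence interchange over locally determined indicators: it needs the step functions $f_k$ built from the sets $D_j$ of \equnm{erg-cp-1} together with the combinatorial $(H,L)$-extension machinery of \lemme{mean} and \propo{cp-erg-2}, which control the truncation error $\cpfG - f_k$ along the orbit of $0$ by direct counting. A secondary point: \propo{cyl-siz} bounds $\mu(\cyli{g_\ell})$ for $\odofG$-\emph{invariant} measures $\mu$, and does not apply to the empirical measures $\nu_N$, which are not invariant; the uniform estimate $\nu_N(\cyli{g_j})\le 1/G_j$ has to come from counting the orbit of $0$ directly, not from that proposition.
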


Before proving \theor{cp-erg} at \secti{pro-cp-erg}, we establish
that~$\cpf[G]$ is an integrable function (\propo{cp-int-ext}).
To that end and for the sake of further developments, we first define
subsets of~$\KcG$ according to the values taken by~$\cpfG$:
\begin{equation}
\fa k\in\N\quantsp
D_{k} =\Defi{s\in\KcG}{\cpG{s}=k+1}
\eqpnt 
\eee
\notag
\end{equation}
The subsets~$D_{k}$ are Boolean combinations of cylinders.
Indeed, $\msp\cpG{s}=k+1\msp$ if and only if:

\tha $g_{k}$ is a right-factor of~$s$;

\thb no~$g_{m}$, $m\ge k+1$, is a right-factor of~$s$.

\noindent 
(Remember that for every integer~$\ell$, $g_{\ell}$ is the
maximal word of~$\LG$ of length~$\ell$ and 
$\msp g_{\ell} =\reprG{G_{\ell}-1}\msp$.)
If~$g_{k}$ is not a right-factor of~$g_{m}$, then
$\msp\cyli{g_{k}}\cap\cyli{g_{m}}=\emptyset\msp$,
hence we can write:
\begin{equation}
\fa k\in\N\quantsp
D_{k} = \cyli{g_{k}} \setminus \bigcup_{m\ge k+1} \cyli{g_{m}} 
\eqvrg 
\eee
\label{q.erg-cp-1}
\end{equation}
and the~$D_{k}$ are measurable.

One can be more precise and give an expression of the~$D_{k}$ that is
unambiguous and that will be used in actual computations.
Consider the (strict) ordering relation `being a right-factor' 
(on~$A_{G}^{*}$):
$\msp h_{1}\rfog h_{2} \msp$ if~$h_{2}$ is a right-factor of~$h_{1}$ 
(and~$h_{1}\not=h_{2}$).
Let
\begin{equation}
    \fa k\in\N\quantsp
    T(k) = \Defi{g_{m}\in\Maxlg[\LG]}{g_{m}\rfog g_{k}}
    \eee
    \notag
\end{equation}
and
\begin{equation}
T'(k) = \min\xmd T(k) =
\Defi{g_{m}\in T(k)}%
  {\text{there exists no $g_{n}$ in $T(k)$}\quantsmsp g_{m}\rfog g_{n}}
\eqpnt
\notag
\end{equation}
For the ease of writing, we define:
\begin{equation}
    I(k) = \Defi{m\in\N}{g_{m}\in T'(k)}
    \eqvrg
    \notag
\end{equation}
and we have
\begin{equation}
    D_{k} = \cyli{g_{k}} \setminus \biguplus_{m\in I(k)} \cyli{g_{m}}
    \eqvrg
\label{q.erg-cp-2}
\end{equation}
where~$\biguplus$ is the \emph{disjoint} union.
Finally, let us give some more notation.
\begin{itemize}
\item  
For every~$k$ in~$\N$, let
$\displaystyle{M_{k}=\sum_{j=k+1}^{\infty}\frac{j+1}{G_{j}}}\msp$.
\e 
By hypothesis,
$\msp\displaystyle{\lim_{k\rightarrow\infty}M_{k}=0}\msp$.
	
\item  
We write
$\msp \displaystyle{F_{k} = \bigcup_{j=0}^{j=k} D_{j}} \msp$.  

\item  
We denote by $\msp f_{k}\msp$ the function that is equal to~$\cpfG$ 
on~$F_{k}$ and to~$0$ everywhere else.

\end{itemize}

For every~$k$ in~$\N$, the function~$f_{k}$ is a \emph{step function},
that is, a linear combination of characteristic functions of
measurable sets, in this case, of the characteristic functions of
the~$D_{j}$, $0\leq j\leq k$ and of the one of 
$\KcG \setminus F_{k}$.
As a direct consequence of \theor{gen-0}, we then have:

\begin{proposition}
\label{p.erg-the-fk}%
\begin{equation}
\fa k\in\N\quantsp
\int_{\KcG} f_{k}\xmd  d\mu_{G} =
     \lim_{N\rightarrow\infty}\xmd\meanEi{f_{k}}{N}
\eqpnt
\eee
\notag
\end{equation}
\end{proposition}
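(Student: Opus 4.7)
The strategy is a truncation argument that combines Theorem~\ref{t.gen-0} (which supplies cylinder-by-cylinder convergence $\nu_N(\cyli{w})\to\mu_G(\cyli{w})$) with a uniform tail bound coming from the combinatorial structure of the greedy algorithm. By linearity of $\nu_N$ and $\mu_G$ applied to $f_k = \sum_{j=0}^k (j+1)\Charf{D_j}$, it suffices to prove that $\lim_N \nu_N(D_j) = \mu_G(D_j)$ for each $j$ with $0\leq j\leq k$. Using the disjoint decomposition of $D_j$ from equation~(\ref{q.erg-cp-2}), we have, for both $\eta=\nu_N$ and $\eta=\mu_G$,
\begin{equation}
\eta(D_j) \,=\, \eta(\cyli{g_j}) \,-\, \sum_{m\in I(j)} \eta(\cyli{g_m}),
\notag
\end{equation}
and since $I(j)$ may be infinite, the problem reduces to justifying the exchange of $\lim_N$ with the summation over $m\in I(j)$.

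For each $M\geq k$, I would set $D_j^{(M)}=\cyli{g_j}\setminus\bigcup_{m\in I(j),\, m\leq M}\cyli{g_m}$ and $f_k^{(M)}=\sum_{j=0}^k(j+1)\Charf{D_j^{(M)}}$. Each $f_k^{(M)}$ is a \emph{finite} $\Z$-linear combination of cylinder indicators, so Theorem~\ref{t.gen-0} together with linearity yields $\lim_N\nu_N(f_k^{(M)})=\mu_G(f_k^{(M)})$. Since $D_j^{(M)}\supseteq D_j$ and $D_j^{(M)}\searrow D_j$ as $M\to\infty$, monotone convergence (continuity of $\mu_G$) gives $\mu_G(f_k^{(M)})\to\mu_G(f_k)$. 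Combined with the pointwise inequality $f_k\leq f_k^{(M)}$, this delivers the upper bound $\limsup_N\nu_N(f_k)\leq\mu_G(f_k)$ by letting first $N\to\infty$ and then $M\to\infty$.

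The matching lower bound is the delicate part and represents the main obstacle. From $0\leq f_k^{(M)}-f_k\leq (k+1)\sum_{m>M}\Charf{\cyli{g_m}}$, one needs $\sum_{m>M}\nu_N(\cyli{g_m})\to 0$ as $M\to\infty$ \emph{uniformly in} $N$; the pointwise convergence given by Theorem~\ref{t.gen-0} is not enough on its own. The key ingredient is a combinatorial bound of the form $\nu_N(\cyli{g_m})\leq C/G_m$, with $C$ independent of both $N$ and $m$: this follows from the greedy structure, since the integers $i\in[0,N)$ whose expansion $\reprG{i}$ ends with $g_m$ are determined by prepending a valid word to $g_m$, and the resulting values fall into distinct residue classes that can be counted by at most $\lceil N/G_m\rceil$ terms (a discrete analogue of Proposition~\ref{p.cyl-siz} for $\nu_N$). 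Summing this bound and invoking hypothesis~(ii) of Theorem~\ref{t.cp-erg}, which ensures the summability of $\sum_m 1/G_m$, yields the uniform tail estimate $\sup_N \sum_{m>M}\nu_N(\cyli{g_m})\to 0$. This gives $\liminf_N\nu_N(f_k)\geq\mu_G(f_k)$ after letting first $N\to\infty$ for the finite truncation and then $M\to\infty$, completing the argument.
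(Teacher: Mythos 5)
Your proposal is correct, and it actually supplies an argument that the paper leaves implicit. The paper presents the proposition as an immediate consequence of \theor{gen-0} on the grounds that $f_k$ is a finite linear combination of the indicators $\Charf{D_0},\ldots,\Charf{D_k},\Charf{\KcG\setminus F_k}$; but \theor{gen-0}, as stated, only asserts $\nu_N(\cyliw)\to\mu_G(\cyliw)$ for individual \emph{cylinders}, whereas each $D_j$ is a cylinder minus a (generally infinite) disjoint union of cylinders, \emph{cf.}~\equnm{erg-cp-2}. You correctly identify that passing from cylinder-level convergence to the $D_j$ is non-trivial, and your truncation $D_j^{(M)}$, the exact convergence for the finite combinations $f_k^{(M)}$, and the squeeze via a \emph{uniform} tail estimate $\sup_N\sum_{m>M}\nu_N(\cyli{g_m})\to 0$ is the right way to close it. So the conclusion stands, with the student proof more explicit than the paper's one-line assertion.

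One correction to the justification of the uniform bound $\nu_N(\cyli{g_m})\leq C/G_m$: the integers $i<N$ with $\reprG{i}$ ending in $g_m$ do not in general lie in a single residue class modulo $G_m$ (this holds for integer bases, but already fails for the Fibonacci basis, where $\reprG{7}=1010$ and $\reprG{10}=10010$ both end in $g_2=10$ yet $7\not\equiv 10\pmod{3}$). What is true, and yields the same count $\leq N/G_m+1$, is that \emph{consecutive} such integers differ by at least $G_m$: if $\reprG{i}=v\xmd g_m$ and $\reprG{i'}=v'\xmd g_m$ with $i<i'$, and $j_0$ is the highest position at which $v,v'$ differ, then $i'-i\geq G_{m+j_0}-\val[G]{v_{[j_0-1,0]}\xmd 0^m}$; since $v_{[j_0-1,0]}\xmd g_m$ is a right-factor of the valid expansion $\reprG{i}$, the greedy condition~\equnm{gre-exp-1} gives $\val[G]{v_{[j_0-1,0]}\xmd 0^m}\leq G_{m+j_0}-G_m$, whence $i'-i\geq G_m$. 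This gives $\nu_N(\cyli{g_m})\leq 2/G_m$ for all $N$, and hypothesis~(ii) of \theor{cp-erg} (which implies $\sum_m 1/G_m<\infty$) then furnishes the uniform tail estimate. (A very minor further point: the pointwise bound on $f_k^{(M)}-f_k$ should carry a constant of order $(k+1)^2$ rather than $(k+1)$, since the sets $D_j^{(M)}\setminus D_j$ for $0\leq j\leq k$ need not be pairwise disjoint; this has no effect on the argument.)
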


From the definition of the~$D_{k}$'s, we now derive that $\cpf[G]$ is 
an integrable function:

\begin{proposition}
\label{p.cp-int-ext}%
Under the conditions of \theor{cp-erg}, $\cpf[G]$ is in $\Lone{\mu_G}$,
that is,
$\msp\displaystyle{\int_{\KcG} \cpfG\xmd d\mu_G}\msp$
exists.
\end{proposition}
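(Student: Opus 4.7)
The plan is to express the integral as a series indexed by the level sets $D_k$ and bound each term using Proposition~\ref{p.cyl-siz}.

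First I would observe that the sets $D_k$, for $k \in \N$, are pairwise disjoint measurable sets (by \equat{erg-cp-1}, each $D_k$ is a Boolean combination of cylinders). Setting $D_\infty = \KcG \setminus \bigcup_{k \in \N} D_k = \{s \in \KcG : \cpG{s} = +\infty\}$, I need to verify that $\mu_G(D_\infty) = 0$, so that $\cpfG$ is defined and finite $\mu_G$-almost everywhere. This will follow from the summability to be established below: if $\sum_{k} \mu_G(D_k)$ converges (which it does under hypothesis (ii), since $\mu_G(D_k) \leq 1/G_k$), then $\mu_G(\bigcup_{k} D_k) \leq \sum_k 1/G_k < +\infty$; combined with the fact that the $D_k$'s cover all points where $\cpfG$ is finite and $\mu_G$ is a probability measure, one concludes $\mu_G(D_\infty) = 0$ (alternatively, using $\mu_G(D_\infty) \leq \mu_G(\cyli{g_m})$ for every $m$, which tends to $0$).

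Next, since $\cpfG$ takes the constant value $k+1$ on $D_k$ and is (essentially) zero outside $\bigcup_k D_k$, by monotone convergence one has
\begin{equation}
\int_{\KcG} \cpfG\, d\mu_G = \sum_{k=0}^{\infty} (k+1)\, \mu_G(D_k)
\eqpnt
\notag
\end{equation}
From the inclusion $D_k \subseteq \cyli{g_k}$ (which follows directly from \equat{erg-cp-2}) and Proposition~\ref{p.cyl-siz}, we obtain $\mu_G(D_k) \leq \mu_G(\cyli{g_k}) \leq 1/G_k$. Therefore
\begin{equation}
\int_{\KcG} \cpfG\, d\mu_G \leq \sum_{k=0}^{\infty} \frac{k+1}{G_k}
= \frac{1}{G_0} + \sum_{k=0}^{\infty} \frac{k}{G_k} + \sum_{k=1}^{\infty} \frac{1}{G_k}
\eqpnt
\notag
\end{equation}
Hypothesis (ii) of \theor{cp-erg} states precisely that $\sum_{k=0}^{\infty} k/G_k$ is bounded; moreover, since the $G_k$ are strictly increasing positive integers, the tail $\sum_k 1/G_k$ is also convergent (bounded by $\sum_k k/G_k$ for $k \geq 1$, for instance). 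The right-hand side is thus finite, which establishes $\cpfG \in \Loneop(\mu_G)$.

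The only conceptual subtlety is the $\mu_G$-negligibility of the set where $\cpfG = +\infty$, which is the only reason the integral makes sense at all; everything else is a straightforward comparison with the convergent series in hypothesis (ii). No intricate ergodic machinery is needed at this stage---Proposition~\ref{p.cyl-siz} already provides the decisive geometric bound on the cylinder measures.
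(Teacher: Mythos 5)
Your overall route matches the paper's: write the (nonnegative) integral as $\sum_{k\ge0}(k+1)\mu_G(D_k)$, use $D_k\subseteq\cyli{g_k}$ together with Proposition~\ref{p.cyl-siz} to get $\mu_G(D_k)\le 1/G_k$, and compare with the series of hypothesis~(ii). You also flag, correctly, a subtlety the paper's own proof glosses over: the identity $\int\cpfG\,d\mu_G=\sum_k(k+1)\mu_G(D_k)$ presupposes that $D_\infty=\{s\in\KcG:\cpG{s}=+\infty\}$ is $\mu_G$-null, since otherwise the left-hand side is $+\infty$. (This set is genuinely nonempty; e.g.\ in the Fibonacci system $\odo[\varphi]{\lom{(01)}}=\lomz$ gives a point with infinite carry propagation.) The paper, by contrast, passes over this point silently and instead puts its extra effort into the two-sided sandwich $\int f_k\,d\mu_G\le\int\cpfG\,d\mu_G\le\int f_k\,d\mu_G+M_k$, which it needs later for $\lim_k\int f_k\,d\mu_G=\int\cpfG\,d\mu_G$; you only need the one-sided bound for the proposition itself, and that part of your argument is fine.

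Neither of your two justifications for $\mu_G(D_\infty)=0$ holds up, though. The estimate $\mu_G\bigl(\bigcup_k D_k\bigr)\le\sum_k 1/G_k$ is vacuous: since $1/G_0=1$, the right side is already $\ge 1$, so this says nothing beyond $\mu_G$ being a probability measure, and it certainly does not force $\mu_G\bigl(\bigcup_k D_k\bigr)=1$. Your alternative, the inclusion $D_\infty\subseteq\cyli{g_m}$ for every $m$, is simply false: in the Tribonacci system $\qge[\psi]=(110)^\omega$ gives $g_m=d_1\cdots d_m$, and the point $s=\lom{(011)}\in\KcG$ satisfies $s_{[3j+1,0]}=g_{3j+2}$ for every $j$ (so, by the characterization preceding \equnm{erg-cp-1}, $\cpG{s}=+\infty$ and $s\in D_\infty$), yet $s_{[2,0]}=011\neq 110=g_3$, so $s\notin\cyli{g_3}$. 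The correct argument is Borel--Cantelli applied to the cylinders $\cyli{g_m}$, \emph{not} to the disjoint sets $D_k$: from (a)--(b) one has $s\in D_\infty$ if and only if $g_m$ is a right-factor of $s$ for infinitely many $m$, i.e.\ $D_\infty=\bigcap_n\bigcup_{m\ge n}\cyli{g_m}$, and since hypothesis~(ii) gives $\sum_m\mu_G(\cyli{g_m})\le\sum_m 1/G_m<+\infty$, one concludes $\mu_G(D_\infty)\le\sum_{m\ge n}1/G_m\to 0$. With this repair your proof is complete.
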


\ifelsevier
\begin{proof}
\else
\begin{proofss}
\fi
Since all terms are positive, we have:
\begin{equation}
\int_{\KcG} \cpfG\xmd d\mu_G =
\lim_{k\rightarrow\infty}
   \sum_{j=0}^{k} (j+1)\xmd\mu_{G}(D_{j})
\eqpnt
\label{q.cp-int-ext}
\end{equation}
From~\equnm{erg-cp-2} and \propo{cyl-siz},
follows
\begin{equation}
    \fa k\in\N\quantsp
    \sum_{j=k+1}^{\infty} (j+1)\xmd\mu_{G}(D_{j})
    \leq
    \sum_{j=k+1}^{\infty} (j+1)\xmd\mu_{G}(\cyli{g_{j}})
    \leq
    \sum_{j=k+1}^{\infty} \frac{j+1}{G_{j}}
    = M_{k}
\eqpnt
\eee
\notag
\end{equation}
Since
$\msp\displaystyle{\int_{\KcG} f_{k}\xmd  d\mu_{G} =
   \sum_{j=0}^{k} (j+1)\xmd\mu_{G}(D_{j})}\msp$ 
we have:
\begin{equation}
\int_{\KcG} f_{k}\xmd  d\mu_{G} \leq
\int_{\KcG} \cpfG\xmd d\mu_G \leq
\int_{\KcG} f_{k}\xmd  d\mu_{G} + M_{k}
\eqvrg
\label{q.erg-cp-4}
\end{equation}
which shows not only that
$\msp \displaystyle{\int_{\KcG} \cpfG\xmd d\mu_G} \msp$ exists but 
also that 
\begin{equation}
    \lim_{k\rightarrow\infty}
\int_{\KcG} f_{k}\xmd  d\mu_{G} =
\int_{\KcG} \cpfG\xmd d\mu_G 
\eqpnt
\ifelsevier\else\tag*{\EoP}\fi
\end{equation}
\ifelsevier
\end{proof}
\else
\end{proofss}
\fi

\noindent
Of course, we have:
\begin{equation}
\fa k\in\N\quantvrg\fa N\in\N\quantsp
\meanEi{f_{k}}{N} \leq \meanEi{\cpfG}{N}
\eqvrg
\ee\e
\label{q.erg-fk-1}
\end{equation}
but it is not enough that the left-hand side has a limit for the 
right-hand side to have also one.
We need to find a bounding interval as in~\equnm{erg-cp-4} in order 
to insure that the quantity
$\msp\displaystyle{\meanEi{\cpfG}{N}}\msp$
converges when~$N$ tends to infinity.
And this is what is done in the next subsection.

\subsubsection{Proof of \theor{cp-erg}}
\label{s.pro-cp-erg}%

We begin with some more notation.
First, for every integer~$N$ in~$\N$, we write $\msp\deghN\msp$ for 
\NeM{je ne suis sžr ni du nom, ni du symbole, mais je pense que le 
concept est utile.}%
the \emph{degree}, or \emph{height}, with respect to the basis (or 
scale)~$G$, that is, the integer~$k=\deghN$ is such that 
$\msp G_{k}\leq N<G_{k+1}\msp$. 
In particular,
$\msp\lght{\reprG{N}}=\deghN+1\msp$.

Second, for every integer~$n$ in~$\N$, and for 
every~$k\geq\lght{\reprG{n}}=\ell$, we write 
$\msp\repr[G,k]{n}\msp$ for 
$\msp\repr[G,k]{n}=0^{k-\ell}\reprG{n}\msp$,
that is, $\repr[G,k]{n}$ is the \emph{unique} word 
in~$A_{G}^{k} \cap 0^{*}\reprG{n}$.
Finally, in the same way as we write $\msp\cpG{n}\msp$ for
$\msp\cpG{\tau^{n}(0)}$, we write $\msp f_{k}(n)\msp$  
for $\msp f_{k}\left(\tau^{n}(0)\right)$.
The greedy algorithm (\defin{gre-alg}) may then
equivalently be rewritten as follows.

\begin{lemma}
\label{r.gre-alg}
The $G$-expansions of integers, that is, the greedy algorithm for the 
basis~$G$, is described by the following recurrence formula:

\thi $\msp\reprG{0} = \epsilon\msp$;

\thii $\msp\fa N\in\N\msp$, if $\msp k = \deghN\msp$, then 
$\msp \reprG{N} = d\xmd\repr[G,k]{r}\msp$ \\
\PushLine 
with $\msp d = \quot{N}{G_{k}}\msp$ and
$\msp r = \rest{N}{G_{k}}\msp$.
\ee
\end{lemma}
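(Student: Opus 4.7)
The statement is essentially a reformulation of the greedy algorithm (\defin{gre-alg}) as a tail-recursion, so the plan is to prove it by strong induction on $N$, matching step by step the iterative description with the proposed recursive one.

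The base case $N=0$ holds by the convention $\reprG{0}=\epsilon$. For the inductive step, fix $N\ge 1$, set $k=\deghN$, $d=\quot{N}{G_{k}}$, $r=\rest{N}{G_{k}}$, so that $N=d\xmd G_{k}+r$ with $0\le r<G_{k}$. Applying \defin{gre-alg} to $N$ produces as its step (ii) exactly the digit $x_{k}=d$ and the partial remainder $r_{k}=r$; all subsequent digits $x_{k-1},\ldots,x_{0}$ are then produced by step (iii), whose recurrence depends only on $r_{k}=r$ and on the values $G_{k-1},\ldots,G_{0}$. In other words, the sequence $x_{k-1}\xmd x_{k-2}\cdots x_{0}$ is precisely what the greedy algorithm, started at $r$, produces when the starting height index is forced to be $k-1$ rather than the height $\deghR$ of $r$.

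The key point is then to compare this forced run with $\reprG{r}$. If $r=0$, step (iii) yields only zeros and the resulting word is $0^{k}$, which is by definition $\repr[G,k]{0}$, so the concatenation is $d\xmd\repr[G,k]{r}$ as announced. If $r\ge 1$, let $k'=\deghR$, so $G_{k'}\le r<G_{k'+1}\le G_{k}$ and hence $k'\le k-1$. For every index $i$ with $k'<i\le k-1$, one has $r_{i+1}\le r<G_{i}$, whence $x_{i}=0$ and $r_{i}=r_{i+1}$; so step (iii) first outputs $k-1-k'$ zeros and leaves the intermediate remainder equal to $r$. From index $i=k'$ onward, the recurrence of step (iii) started from $r_{k'+1}=r$ coincides symbol by symbol with steps (ii)-(iii) of \defin{gre-alg} applied to $r$. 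By the induction hypothesis applied to $r<N$, this latter run produces $\reprG{r}$, a word of length $k'+1$. Hence
\begin{equation}
\reprG{N}=d\xmd 0^{k-1-k'}\xmd\reprG{r}=d\xmd\repr[G,k]{r}\eqvrg
\notag
\end{equation}
the last equality being the definition of $\repr[G,k]{r}$ as the padding of $\reprG{r}$ on the left with $k-\lght{\reprG{r}}=k-1-k'$ zeros to reach length $k$.

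The only mildly delicate point, and the one requiring care, is the bookkeeping of leading zeros: the greedy algorithm applied directly to $r$ starts at index $\deghR=k'$, whereas inside the run for $N$ it is forced to start at index $k-1$, producing the extra zeros that are exactly absorbed into the padding $\repr[G,k]{r}$. Everything else is routine substitution, so no further difficulty is expected.
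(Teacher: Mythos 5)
The paper gives no explicit proof of this lemma: it is presented as an immediate restatement of the greedy algorithm in \defin{gre-alg}, and the only subtlety --- the bookkeeping of leading zeros absorbed into the padding $\repr[G,k]{\cdot}$ --- is left to the reader. Your induction is correct and verifies exactly that subtlety: you show that the digits $x_{k-1}\cdots x_0$ produced at step (iii) for $N$ consist of $k-1-\deghR$ zeros followed by the digits of $\reprG{r}$, which is the definition of $\repr[G,k]{r}$. One small remark: the appeal to the induction hypothesis is not actually needed --- once you observe that from index $k'=\deghR$ downward the recurrence coincides symbol for symbol with steps (ii)--(iii) of \defin{gre-alg} applied to $r$, the fact that it outputs $\reprG{r}$ is just the definition of $\reprG{r}$, not an inductive consequence. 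The argument could therefore be phrased as a direct comparison of two runs of the greedy algorithm, which is presumably what the authors had in mind in omitting the proof; your version is a harmless over-formalization of the same idea.
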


\begin{corollary}
\label{c.gre-alg}~

Let~$N$ in~$\N$ and $k = \deghN$.
If $\msp N<G_{k+1}-1\msp$, then:
$\msp\cpG{N}=\cpG{\rest{N}{G_{k}}}\msp$.
\end{corollary}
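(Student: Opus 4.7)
The plan is to apply Lemma~\ref{r.gre-alg} twice, to $N$ and to $N+1$, and to compare the two outcomes.

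First I would note that the hypothesis $N < G_{k+1} - 1$ implies $\degh{N+1} = k$, since $G_k \leq N + 1 < G_{k+1}$. Setting $d = \quot{N}{G_k}$, $r = \rest{N}{G_k}$, $d' = \quot{(N+1)}{G_k}$ and $r' = \rest{(N+1)}{G_k}$, Lemma~\ref{r.gre-alg} then yields $\reprG{N} = d\,\repr[G,k]{r}$ and $\reprG{N+1} = d'\,\repr[G,k]{r'}$, both words of length $k + 1$.

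I would then split into two cases. Case~(i): $r + 1 < G_k$, so that $d' = d$ and $r' = r + 1$. Since the leading digits of $\reprG{N}$ and $\reprG{N+1}$ agree, $\rdiff{\reprG{N}, \reprG{N+1}} = \rdiff{\repr[G,k]{r}, \repr[G,k]{r+1}}$, and it remains to show this quantity equals $\cpG{r} = \rdiff{\reprG{r}, \reprG{r+1}}$. A brief subcase analysis on whether $\lght{\reprG{r+1}}$ equals $\lght{\reprG{r}}$ (where the common prefix of leading $0$'s in the padded versions contributes equally to total length and to the longest common prefix, and thus cancels) or $\lght{\reprG{r+1}} = \lght{\reprG{r}} + 1$ (where both sides evaluate to $\lght{\reprG{r+1}}$) then delivers the equality.

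Case~(ii): $r + 1 = G_k$, which forces $r' = 0$ and $d' = d + 1$. The leading digits now differ, hence $\rdiff{\reprG{N}, \reprG{N+1}} = k + 1$. On the other side, $\cpG{r} = \cpG{G_k - 1}$ compares $g_k$ (of length $k$) with $\reprG{G_k} = 1\,0^k$ (of length $k + 1$); the ``different lengths'' clause of the definition of $\rdiff{\cdot,\cdot}$ also gives $\max(k, k+1) = k + 1$.

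The main delicacy I anticipate is the careful treatment of the $\rdiff{\cdot,\cdot}$ convention when the two words involved have different lengths: the definition pads the shorter word with a symbol \emph{outside} $A$, whereas $\repr[G,k]{\cdot}$ pads with the genuine letter $0 \in A_G$. The two paddings happen to give the same value of $\rdiff{\cdot,\cdot}$ in the relevant subcase because any ``extra'' leading zero of $\repr[G,k]{r}$ lies opposite the nonzero leading digit of $\reprG{r+1}$ in $\repr[G,k]{r+1}$, so the longest common prefix has exactly the desired length.
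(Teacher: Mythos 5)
Your proof is correct and follows essentially the same strategy as the paper's: apply Lemma~\ref{r.gre-alg} to both $N$ and $N+1$ and split on whether the leading digit changes, which is the dichotomy the paper phrases as $\cpG{N}=k+1$ versus $\cpG{N}\leq k$. Your handling of the $\rdiff{\cdot,\cdot}$ convention in Case~(i) --- distinguishing the subcase $\lght{\reprG{r+1}}=\lght{\reprG{r}}+1$ and observing that the two padding conventions agree because the nonzero leading digit of $\reprG{r+1}$ lies opposite a padded zero --- is in fact more explicit than the paper's proof, which silently identifies $\reprG{\rest{N}{G_{k}}}$ with the padded word $w$ of length $k$.
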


\begin{proof}
Let~$\msp\reprG{N}=d\xmd w\msp$; 
then~$\msp\reprG{\rest{N}{G_{k}}}=w\msp$ 
with~$\msp\lght{w}=k$.
There are two possibilities: either
$\msp\cpG{N}=k+1\msp$ or $\msp\cpG{N}\leq k\msp$.
Indeed, the possibility that $\msp N+1=G_{k+1}\msp$ and 
$\msp\cpG{N}= k+2\msp$ is ruled out by the hypothesis 
$\msp N<G_{k+1}-1\msp$.

If $\msp\cpG{N}=k+1\msp$, then 
$\msp\reprG{N+1}=(\dpu)\xmd 0^{k}\msp$ and
then~$\msp\reprG{\rest{N}{G_{k}}+1}=1\xmd 0^{k}\msp$ and
$\msp\cpG{\rest{N}{G_{k}}}=k+1\msp$.

If $\msp\cpG{N}\leq k\msp$, then 
$\msp\reprG{N+1}=d\xmd w'$ and
then~$\msp\reprG{\rest{N}{G_{k}}+1}=w'\msp$ and
$\msp\cpG{\rest{N}{G_{k}}}=\cpG{N}\msp$ again.
\end{proof}

The first step toward \theor{cp-erg} is the description of the
relationship between the functions~$f_{k-1}$ and~$f_{k}$ for all
numbers less than~$G_{k+1}$, as expressed by the following.

\begin{proposition}
\label{p.cp-erg-1}
For every~$k$ in~$\N$, we have:
\begin{equation}
\fa N\in\N\quantvrg
0<N< G_{k+1}\quantsmsp
\sumTEi{f_{k}}{N} =
\sumTEi{f_{k-1}}{N} + \Inte{\frac{N}{G_{k}}}\xmd(k+1)
\eqpnt
\e
\label{q.cp-erg-1}
\end{equation}
\end{proposition}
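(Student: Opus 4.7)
The plan is to reduce the identity to a counting statement about the integers $i \in [0,N)$ at which $\cpG{i} = k+1$, and then to use the greedy algorithm (via Corollary~\ref{c.gre-alg}) to identify and count those integers exactly.

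First, by the very definition of~$f_k$ the difference $f_k - f_{k-1}$ vanishes outside $D_k$ and equals $k+1$ on $D_k$ (with the convention $f_{-1}\equiv 0$ for the base case $k=0$). Therefore,
\begin{equation}
\sumTEi{f_{k}}{N} - \sumTEi{f_{k-1}}{N}
= (k+1)\xmd\#\Defi{i}{0\le i< N,\xmd \cpG{i}=k+1}
\eqvrg
\notag
\end{equation}
and it suffices to show that the cardinality on the right equals $\Inte{N/G_{k}}$ whenever $0<N<G_{k+1}$.

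Second, I would identify exactly which integers in $[0,G_{k+1})$ satisfy $\cpG{i}=k+1$. For $i<G_{k}$, the word $\reprG{i}$ has length at most~$k$, so $\cpG{i}=k+1$ forces $\reprG{i+1}$ to have length at least~$k+1$; since $i+1\le G_{k}$, this is possible only when $i+1=G_{k}$, \ie $i=G_{k}-1$. For $G_{k}\le i<G_{k+1}-1$, Corollary~\ref{c.gre-alg} gives $\cpG{i}=\cpG{\rest{i}{G_{k}}}$, so by the previous case $\cpG{i}=k+1$ if and only if $\rest{i}{G_{k}}=G_{k}-1$, that is, if and only if $i+1$ is a (positive) multiple of~$G_{k}$.

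Third, the hypothesis $0<N<G_{k+1}$ means that $i<N$ excludes the value $i=G_{k+1}-1$ (which has $\cpG{i}=k+2$, not $k+1$), so every $i\in[0,N)$ with $\cpG{i}=k+1$ has $i+1$ a positive multiple of~$G_{k}$ lying in $[1,N]$. These are counted by $\Inte{N/G_{k}}$, yielding \equat{cp-erg-1}. The main obstacle is the edge-case bookkeeping: one must carefully isolate $i=G_{k+1}-1$, whose removal is exactly what the strict inequality $N<G_{k+1}$ in the statement guarantees; the rest is a direct application of Corollary~\ref{c.gre-alg} and a multiplicity count.
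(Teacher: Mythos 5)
Your proof is correct and follows essentially the same route as the paper's: both arguments come down to observing that $f_k$ and $f_{k-1}$ differ (by exactly $k+1$) precisely at those $i<N$ with $i+1$ a positive multiple of $G_k$, of which there are $\Inte{N/G_{k}}$. The only difference is presentational — the paper tabulates the values of $\cpfG$ on the successive blocks $[m\xmd G_k-1,\,(m+1)\xmd G_k-1)$ without further justification, whereas you derive the location of the exceptional points from \corol{gre-alg}, which is arguably a cleaner justification of the same case analysis.
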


\begin{proof}
Let~$k$ in~$\N$ and~$d_{k}$ be the largest digit that appears at 
index~$k$ (remember that the rightmost index is~$0$), that is: 
\begin{equation}
d_{k}\xmd G_{k} < G_{k+1} \leq (d_{k}+1)\xmd G_{k} 
\eqpnt 
\notag
\end{equation}
(An integer base is the case where the equality on the right holds 
for the same digit for every~$k$.)
Let us consider the integers in the interval~$[0,G_{k+1}[$ and the 
functions~$f_{k-1}$ and~$f_{k}$:
\begin{alignat}{3}
\text{If}\ee\ \ \,
0\leq n &< G_{k}-1   & \text{then } \cpG{n}&\leq k  &
\text{and}\e f_{k-1}(n)=&f_{k}(n)
\notag\\
 n &=G_{k}-1   &\text{then }\cpG{n}&= k+1  &
\ \ \text{and}\ \ f_{k-1}(n)=0,  \  &f_{k}(n)=k+1
\notag\\
G_{k}-1\leq n &< 2\xmd G_{k}-1 &  \text{then } \cpG{n}&\leq k &
\text{and}\e f_{k-1}(n)=&f_{k}(n)
\notag\\
 n &=2\xmd G_{k}-1   &  \text{then } \cpG{n}&= k+1  &
\ \ \text{and}\ \  f_{k-1}(n)=0, \ &f_{k}(n)=k+1
\notag\\[-3ex]
\intertext{\eee\eee.........}
d_{k}\xmd G_{k}-1\leq n &<G_{k+1}-1 &\ \,\text{then } \cpG{n}&\leq k &
\text{and}\e f_{k-1}(n)=&f_{k}(n)
\notag
\end{alignat}
Taking advantage that all summations go to~$N-1$, 
these~$2\xmd d_{k}+1$ lines of equalities imply~\equnm{cp-erg-1}.
\end{proof}

The aim is to obtain an equation of the same kind as~\equnm{cp-erg-1} 
but which holds for all~$N$ in~$\N$.
\corol{gre-alg} leads to the definition of \emph{$(H,L)$-extensions},
\propo{cp-erg-1} gives us a hint for the elementary arithmetic
\lemme{mean} that will pave the way to the solution
(\propo{cp-erg-2}).

\begin{definition}
\label{d.HL-ext}
Let~$H$ and~$L$ in~$\N$, with~$H<L$.
Let $\msp\alpha\colon[0,H[\rightarrow\N\msp$ be a function.
Let 
$\msp\alpha'\colon[0,L[\rightarrow\N\msp$ 
be the function defined by 
\begin{equation}
    \fa m\in\xmd [0,L[\quantsp
 \alpha'(m) = \alpha(\rest{m}{H})  
\eqpnt
\eee
\label{q.HL-ext}
\end{equation}
We call~$\alpha'$ the \emph{$(H,L)$-extension} of~$\alpha$. 
\end{definition}

The graph of the $(H,L)$-extension of~$\alpha$ consists then of the 
repetition of the graph of~$\alpha$ translated by the 
quantities~$H$, $2\xmd H$, \etc, along the $x$-axis, until~$k\xmd H$, 
where $\msp k\xmd H< L\leq(k+1)\xmd H\msp$, the last piece being cut 
off at the abscissa~$L$.

\begin{lemma}
\label{l.mean}
Let~$H$ in~$\N$ and
$\msp\alpha,\beta\colon[0,H[\rightarrow\N\msp$
be two functions with the property that there exists a~$K$ in~$\N$ 
(presumably~$K<H$) and a constant~$C$ in~$\N$ such that
\begin{equation}
\fa n\in\xmd ]0,H]\quantsp
\sumTEi{\beta}{n}\leq
\sumTEi{\alpha}{n} + C\xmd\Inte{\frac{n}{K}}
\eqpnt
\eee
\label{q.mean-lem-1}
\end{equation}
Let~$L$ in~$\N$ ($L>H$), and
$\msp\alpha',\beta'\colon[0,L[\rightarrow\N\msp$
be the $(H,L)$-extensions of~$\alpha$ and~$\beta$ respectively.
Then we have:
\begin{equation}
    \fa m\in\xmd ]0,L]\quantsp
    \sumTEj{\beta'}{m}\leq
\sumTEj{\alpha'}{m} + C\xmd\Inte{\frac{m}{K}}
\eqpnt
\eee
\label{q.mean-lem-3}
\end{equation}
\end{lemma}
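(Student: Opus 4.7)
The plan is to reduce the statement to the original inequality by exploiting the periodic structure of the $(H,L)$-extensions. Given $m \in\, ]0, L]$, I would write the Euclidean division $m = q\xmd H + r$ with $0 \leq r < H$, and split both sums into $q$ full periods plus a partial block of length $r$. Since $\alpha'$ and $\beta'$ repeat $\alpha$ and $\beta$ by construction,
\begin{equation}
\sumTEj{\beta'}{m} = q\xmd \sumTEi{\beta}{H} + \sumTEi{\beta}{r}\eqvrg
\notag
\end{equation}
and similarly for $\alpha'$.

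Next I would apply the hypothesis~\equnm{mean-lem-1} twice: once at $n = H$ (always valid), and once at $n = r$ when $r>0$ (valid since $r \in\, ]0,H]$; if $r=0$ the partial block is empty and nothing is needed). Summing these bounds gives
\begin{equation}
\sumTEj{\beta'}{m} \leq \sumTEj{\alpha'}{m} + C\xmd\left(q\xmd\Inte{\frac{H}{K}}+\Inte{\frac{r}{K}}\right)\eqpnt
\notag
\end{equation}

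It then remains to show that the integer $q\xmd\Inte{H/K}+\Inte{r/K}$ is at most $\Inte{m/K}$. This is the one routine calculation and the only place where anything subtle could occur, but it is immediate: using $\Inte{x}\leq x$ we get
\begin{equation}
q\xmd\Inte{\frac{H}{K}}+\Inte{\frac{r}{K}} \leq \frac{q\xmd H + r}{K} = \frac{m}{K}\eqvrg
\notag
\end{equation}
and since the left-hand side is an integer it is at most $\Inte{m/K}$. Substituting this bound yields~\equnm{mean-lem-3}. I do not expect any genuine obstacle here; the lemma is essentially a bookkeeping step whose only subtlety lies in checking that the floor functions behave subadditively along the periodic decomposition, which is precisely the inequality just verified.
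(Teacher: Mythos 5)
Your proof is correct and follows essentially the same route as the paper's: split the sum along the period $m = qH + r$, apply the hypothesis once at $n = H$ and once at $n = r$, then invoke the floor subadditivity $q\lfloor H/K\rfloor + \lfloor r/K\rfloor \leq \lfloor m/K\rfloor$. The only (minor) difference is that you spell out the justification of that last inequality via $\lfloor x\rfloor \leq x$ plus integrality, whereas the paper simply calls it ``obvious.''
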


\begin{proof}
From \equnm{mean-lem-1} follows in particular
\begin{equation}
    \sumTEi{\beta}{H}\leq
\sumTEi{\alpha}{H} + C\xmd\Inte{\frac{H}{K}}
\eqpnt
\notag
\end{equation}
For $m$ in~$\N$, let us write
$\msp d=\quot{m}{H} \msp$ and
$\msp n=\rest{m}{H} \msp$
(hence $m=d\xmd H+n$).
Then, using the definition of $(H,L)$-extension, one writes
\begin{equation}
    \sumTEj{\beta'}{m} =
d\xmd\left(\sumTEi{\beta}{H}\right) + \sumTEi{\beta}{n}
\eqvrg
\notag
\end{equation}
with the convention that
$\msp\displaystyle{\sum_{i=0}^{i=-1}\!\beta(i) = 0}\msp$.
We then have:
\begin{align}
    \sumTEj{\beta'}{m} & \leq
d\xmd\left(\sumTEi{\alpha}{H} + C\xmd\Inte{\frac{H}{K}}\right) + 
\sumTEi{\alpha}{n}+ C\xmd\Inte{\frac{n}{K}}
\notag\\
   & \leq
\sumTEj{\alpha'}{m} + 
   C\xmd \left(d\xmd\Inte{\frac{H}{K}} + \Inte{\frac{n}{K}}\right)
\notag\\
   & \leq
\sumTEj{\alpha'}{m} + C\xmd \Inte{\frac{m}{K}}  
\eqvrg
\notag
\end{align}
from the obvious inequality
$\msp\displaystyle{d\xmd\Inte{\frac{H}{K}} + 
\Inte{\frac{n}{K}} \leq \Inte{\frac{d\xmd H+n}{K}}}\msp$.
\end{proof}

The key statement for the proof of \theor{cp-erg} reads as follows.

\begin{proposition}
\label{p.cp-erg-2}
Let~$k$ be a fixed integer greater than~$1$.	
Then, for every~$N$ in~$\N$:
\begin{equation}
\sumTEi{f_{k}}{N}\leq
\sumTEi{f_{k-1}}{N} + \Inte{\frac{N}{G_{k}}}\xmd(k+1)
\eqpnt
\label{q.cp-erg-2}
\end{equation}
\end{proposition}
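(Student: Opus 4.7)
The plan is to prove \equnm{cp-erg-2} by induction on $\ell \geq k$, showing at each stage that the inequality holds for every $N \in \xmd ]0, G_{\ell+1}]$. The base case $\ell = k$ is exactly \propo{cp-erg-1}. For the inductive step from $\ell$ to $\ell+1$, I will apply \lemme{mean} with $H = G_{\ell+1}$, $L = G_{\ell+2}$, $K = G_k$, $C = k+1$, taking $\alpha$ and $\beta$ to be the restrictions of $f_{k-1}$ and $f_k$ respectively to $[0, G_{\ell+1}[$. The hypothesis \equnm{mean-lem-1} is precisely the inductive assumption, so the conclusion \equnm{mean-lem-3} gives
\begin{equation}
\sumTEi{\beta'}{N} \leq \sumTEi{\alpha'}{N} + (k+1)\,\Inte{\frac{N}{G_k}}\notag
\end{equation}
for the $(G_{\ell+1}, G_{\ell+2})$-extensions $\alpha'$ and $\beta'$, for every $N \in \xmd ]0, G_{\ell+2}]$.

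The crucial step is to transfer this inequality back to $f_{k-1}$ and $f_k$ themselves on $[0, G_{\ell+2}[$. By \corol{gre-alg}, for every $i \in [G_{\ell+1}, G_{\ell+2}[$ with $i \neq G_{\ell+2} - 1$ the equality $\cpG{i} = \cpG{\rest{i}{G_{\ell+1}}}$ holds, whence $f_k(i) = \beta'(i)$ and $f_{k-1}(i) = \alpha'(i)$; on $[0, G_{\ell+1}[$ this identification is trivial. So the two pairs $(\beta', \alpha')$ and $(f_k, f_{k-1})$ agree everywhere on $[0, G_{\ell+2}[$ save possibly at the single exceptional index $i^* = G_{\ell+2} - 1$, where $\cpG{i^*} = \ell + 3 > k+1$ and thus $f_k(i^*) = f_{k-1}(i^*) = 0$, whereas $\beta'(i^*) - \alpha'(i^*)$ may equal $k+1$ (namely when $\cpG{\rest{i^*}{G_{\ell+1}}} = k+1$).

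The main obstacle is controlling that exceptional contribution, and it works out because the discrepancy goes in the favourable direction. Since $f_k - f_{k-1}$ equals $k+1$ on $\{s : \cpG{s} = k+1\}$ and vanishes elsewhere, the two differences $f_k - f_{k-1}$ and $\beta' - \alpha'$ are everywhere non-negative, and at $i^*$ one has $\beta'(i^*) - \alpha'(i^*) \geq 0 = f_k(i^*) - f_{k-1}(i^*)$. Combined with the equality $\beta'(i) - \alpha'(i) = f_k(i) - f_{k-1}(i)$ at every other index, this gives the pointwise inequality $\beta'(i) - \alpha'(i) \geq f_k(i) - f_{k-1}(i)$ throughout $[0, G_{\ell+2}[$, and summing yields
\begin{equation}
\sumTEi{f_{k}}{N} - \sumTEi{f_{k-1}}{N} \leq \sumTEi{\beta'}{N} - \sumTEi{\alpha'}{N} \leq (k+1)\,\Inte{\frac{N}{G_k}}\notag
\end{equation}
for every $N \in \xmd ]0, G_{\ell+2}]$. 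This closes the induction, and since $G_{\ell+1} \to \infty$ the inequality holds for all $N \in \N$.
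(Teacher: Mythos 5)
Your proof is correct and follows essentially the same strategy as the paper: induction on the top level, \lemme{mean} applied to the restrictions of $f_{k-1}$ and $f_{k}$, and \corol{gre-alg} to identify the $(H,L)$-extensions with those restrictions one level higher. The only genuine difference is how the top index $G_{\ell+2}-1$ is handled. The paper chooses $L = G_{h+2}-1$ in the $(G_{h+1},L)$-extension precisely so that this index never appears; the extension then coincides \emph{exactly} with $f_{k-1}, f_k$ on $[0,G_{h+2}-1[$, and the case $N=G_{h+2}$ is tacked on at the end by noting $f_{k-1}(G_{h+2}-1)=f_{k}(G_{h+2}-1)=0$. You instead take $L=G_{\ell+2}$ and absorb the mismatch at $i^{*}=G_{\ell+2}-1$ by the pointwise observation that $\beta'(i^{*})-\alpha'(i^{*}) \geq 0 = f_{k}(i^{*})-f_{k-1}(i^{*})$. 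Both resolutions are clean; yours has the small advantage of not needing to worry about whether $G_{h+2}-1 > G_{h+1}$ (the strict inequality required by \lemme{mean}), which is automatic with your choice $L=G_{\ell+2}$.

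One small imprecision in the write-up: \propo{cp-erg-1} establishes the equality only for $N$ in $]0, G_{k+1}[$, not $]0, G_{k+1}]$, so the base case $\ell = k$ is not quite ``exactly'' that proposition. The endpoint $N = G_{k+1}$ needs the same one-line remark you already make at $i^{*}$: since $\cpG{G_{k+1}-1}=k+2 > k+1$, both $f_{k}$ and $f_{k-1}$ vanish at $G_{k+1}-1$, so the sums are unchanged and the (in)equality extends. This is exactly what the paper spells out for the base case, and your own exceptional-index argument covers it, so it is a slip of phrasing rather than a gap.
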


\begin{proof}
Let us establish by induction that for every~$h$, 
$h\geq k$, we have:
\begin{equation}
\fa N\in\xmd ]0,G_{h+1}]\quantsp
\sumTEi{f_{k}}{N}\leq
\sumTEi{f_{k-1}}{N} + \Inte{\frac{N}{G_{k}}}\xmd(k+1)
\eqpnt
\eee
\label{q.cp-erg-3}
\end{equation}

\propo{cp-erg-1} asserts that indeed \emph{equality} holds
in~\equnm{cp-erg-2} for all~$N$ in~$]0,G_{k+1}[$.
\ifelsevier
If~$\msp N=G_{k+1}\msp$, the summations in~\equnm{cp-erg-2} go up 
to~$\msp n=G_{k+1}-1\msp$, and then $\msp\cpG{n}=k+2\msp$ 
and~$\msp f_{k-1}(n)=f_{k}(n)=0\msp$. 
\else 
If~$\msp N=G_{k+1}\msp$, the summations in~\equnm{cp-erg-2} go up 
to~$\msp n=G_{k+1}-1\msp$, and then $\cpG{n}=k+2\msp$ 
and~$\msp f_{k-1}(n)=f_{k}(n)=0\msp$. 
\fi
Hence the equality still holds, but for the case 
where~$G_{k+1}=(d_{k}+1)\xmd G_{k}$ (and 
then~$\Inte{\frac{N}{G_{k}}}=d_{k+1}$) in which case the 
\emph{inequality} holds and~\equnm{cp-erg-3} is established for~$h=k$.

Let us call~$\alpha$ and~$\beta$ the restrictions to~$[0,G_{h+1}[$ 
of~$f_{k-1}$ and~$f_{k}$ respectively.
Let $\msp L=G_{h+2} - 1\msp$; from \corol{gre-alg} follows that the 
$(G_{h+1},L)$-expansions of~$\alpha$ and~$\beta$ are the restrictions 
to~$[0,G_{h+2}-1[$ of~$f_{k-1}$ and~$f_{k}$ respectively.

From \lemme{mean} we deduce that the inequality~\equnm{cp-erg-2} 
holds for every~$N$ in~$]0,G_{h+2}-1]$.

Since $\msp\cpG{G_{h+2}-1}=h+3\msp$, then
$\msp f_{k-1}(G_{h+2}-1)=f_{k}(G_{h+2}-1)=0\msp$
and~\equnm{cp-erg-2} also holds for~$N=G_{h+2}$,
which completes the induction step.
\end{proof}

\begin{proof}[Proof of \theor{cp-erg}]
	
Let~$k$ be a fixed integer.
For every~$N$ in~$\N$, there exists an
$\msp h=\sup_{n\in[0,N]}\cpG{n} -1\msp$ such that
$\msp \cpG{n}=f_{h}(n) \msp$ for all~$n$ in~$[0,N[$.
(Note that we cannot exchange the quantifiers and state: `there 
exists an~$h$ such that for every~$N$ \etc')
We then have
\begin{equation}
\meanEi{f_{h}}{N}= \meanEi{\cpfG}{N}
\eqpnt
\notag
\end{equation}
From~\equnm{erg-fk-1} and \propo{cp-erg-2} follows
\begin{multline}
\meanEi{f_{k}}{N}\leq \meanEi{\cpfG}{N} \\
\leq
\meanEi{f_{k}}{N} + 
\sum_{j=k+1}^{j=h}\frac{1}{N}\Inte{\frac{N}{G_{j}}}\xmd(k+1)
\eqpnt
\notag
\end{multline}
Two obvious majorizations give	
\begin{equation}
\meanEi{f_{k}}{N}\leq \meanEi{\cpfG}{N} 
\leq
\meanEi{f_{k}}{N} + M_{k+1}
\eqvrg
\notag
\end{equation}
which yields, when~$N$ tends to infinity, and taking 
\propo{erg-the-fk} into account:
\begin{equation}
\fa k\in\N\quantsp
\int_{\KcG} f_{k}\xmd  d\mu_{G}\leq 
\lim_{N\rightarrow\infty}\meanEi{\cpfG}{N} 
\leq \int_{\KcG} f_{k}\xmd d\mu_{G} + M_{k+1}
\eqpnt
\eee\ee
\notag
\end{equation}
If we make now~$k$ tend to infinity we get both that the 
limit

\noindent
$\msp\displaystyle{\lim_{N\rightarrow\infty}\meanEi{\cpfG}{N}}\msp$
exists, and that this limit is 
$\msp\displaystyle{\int_{\KcG} \cpfG\xmd  d\mu_{G}}\msp$.
\end{proof}

In the case where the language~$\LG$ of the exponential greedy 
numeration system~$G$ is \pce, we can use the results of 
\secti{car-pro-num} and give the value of the carry propagation.  
Since~$\msp G_\ell\sim C\xmd\alpha^\ell\msp$ implies that the local 
growth rate of~$\LG$ is equal to~$\alpha$, we have, by \corol{car-pro-fil}:

\begin{theorem}
\label{t.cp-erg-pce}
If~$G$ is an exponential GNS with 
\NeM{Est-ce un thŽorme ou un corollaire?}%
$\msp G_\ell \sim C\xmd\alpha^\ell$ and if $\LG$ is \pce, then~$\CPG$ 
exists and
\begin{equation}
\CPG =\frac{\alpha}{\alpha-1}
\eqpnt
\notag
\end{equation}
\end{theorem}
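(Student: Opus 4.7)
The plan is to combine two results already established in the paper: the existence of $\CPG$ for exponential greedy numeration systems (Corollary \ref{c.exp-uni-erg-2}), and the formula expressing the carry propagation in terms of the local growth rate (Corollary \ref{c.car-pro-fil}) which is available for \pce languages.

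First, since $G$ is exponential, Corollary \ref{c.exp-uni-erg-2} (itself a consequence of Theorems \ref{t.exp-uni-erg} and \ref{t.cp-erg}) gives us immediately that $\CPG$ exists. This takes care of the existence part.

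Next I would compute the value. The key point is that by construction (see Section \ref{s.gre-num-sys}) we have $G_\ell = \bfv[\LG]{\ell}$, the number of $G$-expansions of length at most $\ell$. The hypothesis $G_\ell \sim C\xmd \alpha^\ell$ therefore implies
\begin{equation}
\lim_{\ell\to\infty}\frac{\bfv[\LG]{\ell+1}}{\bfv[\LG]{\ell}}
=\lim_{\ell\to\infty}\frac{C\xmd\alpha^{\ell+1}}{C\xmd\alpha^{\ell}}=\alpha.
\notag
\end{equation}
Since $\LG$ is \pce, Lemma \ref{l.cal-cla} (applied with $x(\ell)=\uL[\LG]{\ell}$ and $y(\ell)=\bfv[\LG]{\ell}$) then yields $\lim_{\ell\to\infty}\frac{\uL[\LG]{\ell+1}}{\uL[\LG]{\ell}}=\alpha$, that is, the local growth rate $\igrr[\LG]$ exists and equals $\alpha$. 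In particular $\alpha>1$.

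Finally, Corollary \ref{c.car-pro-fil} states that whenever the carry propagation of a \pce language exists, it must be equal to $\frac{\igrr}{\igrr-1}$. Combining this with the two points above gives $\CPG=\frac{\alpha}{\alpha-1}$, as desired. The argument is essentially a matching of earlier results; there is no real obstacle here since the hard work (the ergodic-theoretic proof of existence and the combinatorial identification of the limit) has already been done in Sections \ref{s.car-pro-num} and \ref{s.erg-poi-vie}. The only subtlety worth flagging is that the use of Corollary \ref{c.car-pro-fil} requires $\LG$ to be \pce, which is precisely the assumption in the statement.
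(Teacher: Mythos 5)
Your proof is correct and follows essentially the same route the paper takes: existence of $\CPG$ from Corollary~\ref{c.exp-uni-erg-2}, identification of the local growth rate~$\igrr[\LG]=\alpha$ from $G_\ell=\bfv[\LG]{\ell}\sim C\xmd\alpha^\ell$ \via Lemma~\ref{l.cal-cla}, and then Corollary~\ref{c.car-pro-fil} (which requires the \pce hypothesis) to obtain the value. You have merely spelled out the intermediate step that the paper leaves implicit in the sentence preceding the theorem.
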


The next section deals with a family of greedy numeration systems
which have \pce languages, namely $\beta$-numeration systems.

\begin{remark}
\label{r.gns-pce}%
It is somewhat unsatisfactory to have to put an hypothesis on~$\LG$ 
directly.
It would be more natural to have a condition on the basis~$G$ itself
which would insure that~$\LG$ be \pce. 

A necessary condition for~$\LG$ to be \pce is that the sequence 
$\displaystyle{\left\lfloor\frac{G_{n+1}}{G_{n}}\right\rfloor}$ be 
\emph{non-increasing}.
But it is not a sufficient condition, as shown by the sequence
$\msp G= 1, 2, 3, 5, 9, 14, 23, \ldots\msp$:
the representation of~$8$ is~$1\xmd1\xmd0\xmd0$ but 
neither~$1\xmd1\xmd0$ nor~$1\xmd1$ are in~$\LG$.
\end{remark}

\begin{remark}
\label{r.gns-cp-cal}%
The above computations also open the way for the computation 
of~$\CPG$ that would be independent from the \pce hypothesis.
From~\equnm{erg-cp-2} and~\equnm{cp-int-ext} follows:
\begin{equation}
\int_{\KcG} \!\cpfG\xmd d\muG =\!
\sum_{k\geq0} (k+1)\xmd\muG(D_{k})=\! 
\sum_{k\geq0} 
(k+1)\left(\muG(\cyli{g_{k}})-
           \!\!\!\sum_{m\in I(k)}\!\!\muG(\cyli{g_{m}})\right)
.
\label{q.gns-cp-cal-1}
\end{equation}
Instead of using the measure of~$D_{k}$ for every~$k$, it is more 
efficient to compute the sum in~\equnm{gns-cp-cal-1} `layer by layer' 
so to speak.
If we invert the relation~$I$, that is, if we write
$\msp J(m)= \Defi{k}{m\in I(k)}$,
$J(m)$ is a singleton for every~$m$ since~$g_{\ShiftInd{J(m)}}$ is the 
\emph{longest} right-factor of~$g_{m}$ in~$\Maxlg[\LG]$.
The contribution to the sum of the `layer'~$\cyli{g_{k}}$ will be
$(k+1)-\left(J(k)+1\right)=k-J(k)$.
Since~$g_{0}=\varepsilon$ and~$\muG(\cyli{\varepsilon})=1$, we then 
have:
\begin{equation}
\int_{\KcG} \!\cpfG\xmd d\muG = 1 +
\sum_{k\geq1} 
\left(k-J(k)\right)\xmd\muG(\cyli{g_{k}})
\eqpnt
\label{q.gns-cp-cal-2}
\end{equation}
In~\cite{BaraGrab16}, a machinery has been developed for computing the
measure of the cylinders~$\cyli{g_{k}}$ which then would allow one to
obtain the value of the carry propagation without the \pce hypothesis
and the results of \secti{car-pro-num}.
Some examples of the usage of~\equnm{gns-cp-cal-2} are given 
below.
\end{remark}

\subsection{Beta-numeration}
\label{s.beta-num-gen}

Let $\beta>1$ be a real number.
The definition of a GNS associated with~$\beta$ --- due to
Bertrand--Mathis~\cite{BertMath89} --- goes in three steps: the
definition of the \emph{$\beta$-expansion} of real numbers, the one of
\emph{quasi-greedy} $\beta$-expansion of~$1$, and finally the one of
the basis~$\Gb$.

For any real number~$x$, let us denote by~$\frct{x}$ the 
\emph{fractional part} of~$x$, that is, $\frct{x}= x-\ntgr{x}$.
In~\cite{Renyi57}, R\'enyi proposed the following greedy algorithm 
for any~$x\in[0,1]$:
let~$r_{0}=x$ and, for every ~$i\ge1$, let 
$ x_{i}= \ntgr{\beta\xmd r_{i-1}}$
and $ r_{i}=\frct{\beta\xmd r_{i-1}}$.
Then,
\begin{equation}
x=\sum_{i=1}^{+\infty} x_{i}\xmd \beta^{-i}	
\ee\text{with}\ee
\fa i \geq1\quantsmsp
x_{i}\in\Ab=\{0,\ldots,\Digmu{\lceil\beta\rceil}\}
\eqpnt
\label{q.beta-exp-1}
\end{equation}
The sequence $\bge(x)= (x_i)_{i\ge 1}$ is called the 
\emph{$\beta$-expansion} of~$x$.
Seen as a \emph{right} infinite word of~$\Ab^{\omega}$, it is the 
\emph{greatest} in the lexicographic ordering of~$\Ab^{\omega}$ for 
which~\equnm{beta-exp-1} holds.
When the expansion ends in infinitely many~$0$'s, it is said to be 
\emph{finite} (and the $0$'s are omitted).
If~$x$ is greater than~$1$, the same algorithm is used 
for~$x\xmd\beta^{-k}$ such that~$x\xmd\beta^{-k}\in[0,1]$ and then 
the radix point is placed after the $k$-th digit; we thus obtain the 
$\beta$-expansion of~$x$ for any~$x$ in~$\R_{+}$.
 
Let $\bge(1)=(t_n)_{n\ge1}$ be the $\beta$-expansion of~$1$.
We define the \emph{infinite word}~$\qge$, called
the \emph{quasi-greedy expansion} of~$1$, in the following 
way.
If~$\bge(1)$ is infinite, then~$\qge=\bge(1)$.
If~$\bge(1)$ is finite, of the form $\bge(1)=t_1 \cdots t_m$, 
$t_m\neq 0$,  then~$\qge=(t_1 \cdots t_{m-1}(t_m-1))^\omega$.
It is easy to see that~$\qge$ is a $\beta$-representation 
of~$1$, that is, it satisfies~\equnm{beta-exp-1} for~$x=1$.

\begin{definition}[\cite{BertMath89}]
\label{d.beta-GNS}
Let~$\beta>1$ be a real number and $\qge=(d_i)_{i\geq1}$ the 
quasi-greedy expansion of~$1$.
The \emph{canonical greedy numeration system associated with~$\beta$} 
is defined by the basis $\Gb=(G_{\ell})_{\ell\in\N}$ inductively 
defined by: 
\begin{equation}
G_{0}=1
\ee\text{and}\ee
\fa\ell\geq1\quantsp
G_{\ell}=d_{1}\xmd G_{\ell-1}+ d_{2}\xmd G_{\ell-2}+
             \cdots+d_{\ell}\xmd G_{0} + 1
\eqpnt
\notag
\end{equation}
\end{definition}

The canonical GNS associated with~$\beta$ is exponential as asserted by 
the following.

\begin{proposition}[\cite{BertMath89}]
\label{p.beta-GNS-exp}
Let~$\beta>1$ be a real number and $\Gb=(G_{\ell})_{\ell\in\N}$ 
the canonical GNS associated with~$\beta$.
There exists a real constant~$K>0$ such that
$\msp G_\ell \sim K\xmd\beta^{\ell}\msp$.
\end{proposition}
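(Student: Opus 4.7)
The plan is to reduce the asymptotic to a classical renewal-theoretic argument that exploits the identity $\sum_{i \geq 1} d_i\, \beta^{-i} = 1$ built into $\qgeb$. Setting $f_i := d_i\, \beta^{-i}$, the sequence $(f_i)_{i \geq 1}$ is a probability distribution on the positive integers whose mean $\mu := \sum_{i \geq 1} i\, f_i$ is finite (since $d_i \leq \lceil \beta \rceil$, the terms $i\,f_i$ decay geometrically). Moreover the greedy algorithm produces $d_1 = \lfloor \beta \rfloor \geq 1$, or $d_1 = \beta - 1$ when $\beta$ is an integer, so $1$ lies in the support of $(f_i)$ and the distribution is automatically aperiodic. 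Normalizing by $a_\ell := G_\ell/\beta^\ell$ and dividing the defining recurrence $G_\ell = 1 + \sum_{i=1}^\ell d_i\, G_{\ell - i}$ by $\beta^\ell$ produces the inhomogeneous renewal equation
\begin{equation*}
a_\ell \,=\, \beta^{-\ell} + \sum_{i=1}^{\ell} f_i\, a_{\ell-i}, \qquad a_0 = 1.
\end{equation*}

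Next I would pass to generating functions: setting $A(z) = \sum_{\ell \geq 0} a_\ell z^\ell$ and $F(z) = \sum_{i \geq 1} f_i z^i$, a direct manipulation of the renewal equation yields
\begin{equation*}
A(z) \,=\, \frac{1}{(1 - z/\beta)\bigl(1 - F(z)\bigr)}.
\end{equation*}
Because $\beta > 1$, the factor $1 - z/\beta$ does not vanish on the closed unit disk, so the only potential singularities of $A$ there are the zeroes of $1 - F(z)$. From $F(1) = 1$, $F'(1) = \mu \in (0, +\infty)$, and the aperiodicity of $(f_i)$, one deduces that $z = 1$ is a simple zero of $1 - F(z)$, and that it is the unique zero on the circle $|z| = 1$. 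Consequently $A$ has a single, simple pole at $z = 1$, with residue $-\beta/((\beta-1)\mu)$.

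Finally, I would invoke the Erd\H{o}s--Feller--Pollard renewal theorem (or, equivalently, classical singularity analysis applied to $A(z)$) to conclude that
\begin{equation*}
\lim_{\ell \to \infty} a_\ell \,=\, K \,:=\, \frac{\beta}{(\beta - 1)\,\mu} \,>\, 0,
\end{equation*}
which is exactly the asserted asymptotic $G_\ell \sim K\,\beta^\ell$. The main obstacle, modest as it is, lies in the spectral analysis of $1/(1 - F(z))$ on the unit circle: even granted aperiodicity, one must justify pointwise convergence of $a_\ell$ from the generating-function picture, either by citing the renewal theorem as a black box or by a careful transfer-theorem argument based on the simple pole at $z = 1$. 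Everything else in the proof is routine algebraic manipulation.
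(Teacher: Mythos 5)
The paper states this proposition as a citation to Bertrand--Mathis~\cite{BertMath89} and gives no proof of its own, so there is no in-paper argument to compare against. Your renewal-theoretic proof is correct as it stands: the identity $\sum_{i\ge 1} d_i\beta^{-i}=1$ does make $(f_i)$ a probability distribution, the mean $\mu=\sum i\,f_i$ is finite because $d_i\le\lceil\beta\rceil-1$ (a minor slip in your bound, but inconsequential) forces geometric decay, the distribution is aperiodic since $d_1\ge 1$ for every $\beta>1$, the generating-function factorization $A(z)=\bigl((1-z/\beta)(1-F(z))\bigr)^{-1}$ is right, and the Erd\H{o}s--Feller--Pollard theorem (with $b_\ell=\beta^{-\ell}$, $\sum b_\ell=\beta/(\beta-1)<\infty$) gives $a_\ell\to K=\beta/\bigl((\beta-1)\mu\bigr)>0$. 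One can even avoid the black-box renewal theorem here: since $F$ has radius of convergence $\beta>1$, the function $1-F$ is analytic on a disk of radius greater than $1$ with a simple zero at $z=1$ and no other zero in $|z|\le 1$; by compactness of the unit circle this gives $A$ a meromorphic extension to $|z|<1+\epsilon$ with a single simple pole at $z=1$, so $a_\ell=K+\grando{(1+\delta)^{-\ell}}$ follows by elementary Cauchy estimates, which is stronger than what the renewal theorem alone delivers.
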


It is easy to verify that~$\Ab=A_{G_{\beta}}$, but, of course, the 
$\Gb$-representation of an integer~$n$ is not the same as the 
$\beta$-expansion of~$n$.
With a slight abuse, we nevertheless write~$\Lb$ (rather 
than~$L_{G_{\beta}}$) for the representation language of~$\Gb$.
The language~$\Lb$ is characterized by the following.

\begin{proposition}[\cite{Parr60}]
\label{p.beta-GNS-lan}
Let~$\beta>1$ be a real number and $\qge=(d_i)_{i\geq1}$ the 
quasi-greedy expansion of~$1$.
A word $\msp w=w_k \cdots w_0\msp$ is in~$\Lb$ if and only if for 
every~$i$, $0\leq i\leq k$,
$\msp w_i \cdots w_0 \lex d_1 \cdots d_{i+1}\msp$.
\end{proposition}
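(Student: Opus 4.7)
The plan is to invoke \equat{gre-exp-2}, which characterizes the language $L_G$ of any greedy numeration system: $x_k\cdots x_0 \in L_G$ if and only if $x_i\cdots x_0 \rad \reprG{G_{i+1}-1}$ for every $i$ with $0\le i\le k$. Applied with $G=\Gb$, and after padding $\repr[\beta]{G_{i+1}-1}$ with leading zeros so that it has length $i+1$, the radix comparison $\rad$ on same-length words coincides with the lexicographic order $\lex$. The proposition thus reduces to a single identification: for every $i\ge 0$, the length-$(i+1)$ $\Gb$-representation of $G_{i+1}-1$ is the prefix $d_1 d_2\cdots d_{i+1}$ of the quasi-greedy expansion $\qgeb$.

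I would prove this identification by induction on $i$. The defining recursion $G_{i+1} = d_1 G_i + d_2 G_{i-1} + \cdots + d_{i+1} G_0 + 1$ of \defin{beta-GNS} yields the decomposition
\begin{equation}
G_{i+1} - 1 = d_1\xmd G_i + R_i \ee\text{with}\ee R_i = d_2\xmd G_{i-1} + d_3\xmd G_{i-2} +\cdots+ d_{i+1}\xmd G_0.
\notag
\end{equation}
Once one verifies that this is precisely the decomposition produced by the greedy algorithm (\defin{gre-alg}), the inductive hypothesis applied to the shifted digit sequence $d_2 d_3\cdots$ (which inherits the same Parry-type shift structure) gives $\repr[\beta]{R_i} = d_2\cdots d_{i+1}$, and concatenation with the leading $d_1$ closes the induction. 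The base case $i=0$ is immediate from $G_1 = d_1 + 1$.

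The main obstacle is verifying that the greedy algorithm outputs $d_1$ as its first digit, which amounts to showing $R_i < G_i$ so that $d_1 = \lfloor(G_{i+1}-1)/G_i\rfloor$. This is where the characterizing shift property of the quasi-greedy expansion enters: for every $k\ge 1$, the infinite word $d_{k+1}d_{k+2}\cdots$ is lexicographically less than or equal to $\qgeb = d_1 d_2\cdots$. Specialized to $k=1$ and truncated to length $i$, this gives $(d_2,\ldots,d_{i+1}) \lex (d_1,\ldots,d_i)$. To convert this lex inequality into the valuation inequality $R_i \le G_i - 1 = d_1 G_{i-1} + \cdots + d_i G_0$, one needs the auxiliary lemma that whenever two length-$\ell$ digit strings $e_1\cdots e_\ell$ and $d_1\cdots d_\ell$ satisfy $e \lex d$, one has $\sum_{m=1}^{\ell} e_m G_{\ell-m} \le \sum_{m=1}^{\ell} d_m G_{\ell-m} = G_\ell - 1$. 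This lemma is proved by splitting at the first index $r$ where $e_r < d_r$ and then applying the Parry shift property iteratively to bound the tail contribution of $e_{r+1}\cdots e_\ell$ by $G_{\ell-r} - 1$. The same argument then handles each subsequent level of the greedy algorithm, ensuring that the $j$-th digit produced is exactly $d_j$. This bookkeeping, while technical, is the only real work in the proof.
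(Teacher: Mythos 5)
The paper gives no proof of this proposition --- it is quoted from Parry's 1960 paper --- so your argument can only be judged on its own soundness. Your overall strategy is the right one: reduce, via the general characterization \equnm{gre-exp-2} of greedy expansions, to the single identity $\reprG{G_{i+1}-1}=d_1\cdots d_{i+1}$ (a word that already has length $i+1$, since $G_i\le G_{i+1}-1<G_{i+1}$, so no padding is actually needed), and derive that identity from the recursion defining $\Gb$ together with the shift property $d_{k+1}d_{k+2}\cdots\lex\qgeb$ of the quasi-greedy expansion. That shift property is the one genuinely external input; it is not stated in the paper, but it is the standard Parry condition and fair to invoke here.

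Two of your intermediate claims are, however, false as written, though both are repairable. First, your auxiliary lemma fails for arbitrary digit strings: take $\beta$ with $\beta^3=\beta^2+1$, so that $\qgeb=(100)^\omega$ and $G=1,2,3,4,6,\ldots$; then $e=0111$ satisfies $e\lex d_1d_2d_3d_4=1001$, yet $0\cdot G_3+1\cdot G_2+1\cdot G_1+1\cdot G_0=6>5=G_4-1$. The lemma must be stated for strings \emph{every suffix of which} is lexicographically dominated by the corresponding prefix of $\qgeb$ --- exactly the property that the factors $d_{k+1}\cdots d_{k+\ell}$ inherit from the shift condition, and exactly the hypothesis your ``split at the first strict inequality and bound the tail by induction'' argument actually consumes. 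This omission elides the very subtlety the proposition encodes: comparing the whole word is not enough, every suffix must be compared. Second, the shifted sequence $d_2d_3\cdots$ does \emph{not} ``inherit the same Parry-type shift structure'': for the Tribonacci number, $\qge[\psi]=(110)^\omega$ gives $\sigma(\qge[\psi])=(101)^\omega$ while $\sigma^{3}(\qge[\psi])=(110)^\omega\not\lex(101)^\omega$. So the induction cannot be carried by the shifted expansion. The clean repair is to verify directly, via \equnm{gre-exp-1}, that $d_1\cdots d_{i+1}$ is greedy: for each $m$ the relevant suffix sum is the valuation of the factor $d_{i-m+1}\cdots d_{i+1}$ of $\qgeb$, which the corrected lemma bounds by $G_{m+1}-1$. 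With these two repairs (which your closing sentence about ``handling each subsequent level'' essentially anticipates), the proof closes.
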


A comprehensive survey on $\beta$- and $\Gb$-numeration systems can be
found in~\cite{FrouSaka10}.
We now study the carry propagation in these numeration systems.
We write~$\CP[\beta]$ rather 
than~$\CP[L_{\beta}]$.
The last two propositions and the results of \secti{pro-cp-erg} 
immediately imply the following.

\begin{corollary}
\label{c.beta-lan-pce}~

Let~$\beta>1$ be a real number.
Then, the language~$\Lb$ is a \pce language.
\end{corollary}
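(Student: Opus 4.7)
The plan is to verify the two defining properties of a \pce language (prefix-closure and right-extendability) directly from the Parry-type characterization of~$\Lb$ given in \propo{beta-GNS-lan}. The key is to keep the MSDF convention straight: if $w = w_k \cdots w_0$, a \emph{prefix} of~$w$ is of the form $w_k \cdots w_{k-j}$ (obtained by deleting least-significant digits), whereas what the Parry condition calls a right-factor $w_i \cdots w_0$ is the truncation of~$w$ to its $i+1$ least-significant digits. Once this distinction is clear, both properties reduce to elementary manipulations of the lexicographic order.

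For prefix-closure, I will take $w = w_k \cdots w_0 \in \Lb$ and an arbitrary prefix $u = w_k \cdots w_{k-j}$. By \assum{ini-dig}, $w$ begins with a non-zero digit, and so does~$u$, so only the Parry condition needs to be checked for~$u$. Any length-$(m+1)$ right-factor of~$u$, namely $w_{k-j+m} \cdots w_{k-j}$, is a prefix of the length-$(k-j+m+1)$ right-factor $w_{k-j+m} \cdots w_0$ of~$w$. Since by hypothesis the latter is $\lex d_1 \cdots d_{k-j+m+1}$, a routine fact about lexicographic order --- taking equal-length prefixes on both sides preserves the inequality $\lex$ --- yields $w_{k-j+m} \cdots w_{k-j} \lex d_1 \cdots d_{m+1}$, which is exactly the Parry condition required for~$u$.

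For right-extendability, the case $u = \varepsilon$ is trivial since $\Lb$ contains representations of every positive integer, each of which is a non-empty word having $\varepsilon$ as a strict prefix. For a non-empty $u = u_j \cdots u_0 \in \Lb$, I will show that $u \cdot 0 \in \Lb$. The word $u \cdot 0$ still begins with the same non-zero digit as~$u$, so only the Parry condition needs to be verified. The new length-$1$ right-factor is the letter~$0$, and since $\beta > 1$ gives $d_1 \ge 1$, one has $0 \lex d_1$. For a length-$(i+1)$ right-factor with $i \ge 1$, namely $u_{i-1} \cdots u_0 \cdot 0$, a two-case analysis against $d_1 \cdots d_{i+1}$ finishes the job: if the Parry inequality $u_{i-1} \cdots u_0 \lex d_1 \cdots d_i$ is strict, the first discrepancy is preserved by appending~$0$; if it is an equality, the comparison reduces to $0 \le d_{i+1}$, which is trivially true.

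There is no substantive obstacle. The only delicate point is the MSDF bookkeeping --- separating a \emph{right-factor} (the less-significant tail, which the Parry condition constrains) from a \emph{prefix} (the more-significant head, which prefix-closure concerns). No extra hypothesis on~$\beta$ is needed beyond what is already encoded in the quasi-greedy expansion~$\qge$, and the whole argument is a purely combinatorial unpacking of \propo{beta-GNS-lan} together with \assum{ini-dig}.
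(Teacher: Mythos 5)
Your proposal is correct and follows essentially the same route as the paper's proof: both verify prefix-closure and right-extendability directly from the Parry characterization of~$\Lb$ in \propo{beta-GNS-lan}, and both show extendability by appending the digit~$0$. The only difference is in the level of detail: for prefix-closure the paper offers only the terse observation that prefixes of~$\qge$ are $\lex$-comparable, whereas you make the actual working step explicit --- namely that a length-$(m+1)$ right-factor of a prefix of~$w$ is obtained by truncating a right-factor of~$w$ and the corresponding prefix $d_1\cdots d_{i+1}$ of~$\qge$ to equal lengths, and that such equal-length truncation preserves~$\lex$ --- and you also treat the $u=\varepsilon$ case of extendability separately, which the paper leaves implicit. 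These are welcome clarifications rather than a genuinely different method.
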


\begin{proof}
\thi $\Lb$ is prefix-closed since, by definition of the lexicographic 
order, we have
$\msp d_1\cdots d_{j+1}\lex d_1\cdots d_{i+1}\msp$ for every $j\le i$.

\thii $\Lb$ is extendable since if~$w$ is in~$\Lb$, then~$w\xmd0$ is 
in~$\Lb$ as well since we have
$\msp d_1 \cdots d_{i+1}\xmd0 \lex d_1 \cdots d_{i+1}\xmd d_{i+2}\msp$.
\end{proof}

\begin{corollary}
\label{c.beta-cp}
Let~$\beta>1$ be a real number.
Then, the carry propagation of the language~$\Lb$ exists and
is equal to:
\begin{equation}
\CP[\beta]=\frac{\beta}{\beta-1}
\eqpnt
\notag
\end{equation}
\end{corollary}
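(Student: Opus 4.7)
The plan is essentially to assemble the pieces that have been carefully prepared in the preceding development and invoke Theorem~\ref{t.cp-erg-pce}. Concretely, I would proceed as follows.

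First, I would observe that the canonical GNS $G_\beta$ associated with $\beta$ is \emph{exponential} in the sense of the paper: this is precisely the content of Proposition~\ref{p.beta-GNS-exp}, which furnishes a constant $K>0$ such that $G_\ell \sim K\xmd\beta^{\ell}$. In particular the growth parameter $\alpha$ from Theorem~\ref{t.cp-erg-pce} is $\alpha=\beta$, and the condition $\sum_{k} k/G_k<\infty$ required in Theorem~\ref{t.cp-erg} is automatic from the geometric growth, so the associated dynamical system $(\Kc_{G_\beta},\odof[G_\beta])$ is uniquely ergodic by Theorem~\ref{t.exp-uni-erg} and the carry propagation $\CPG[G_\beta]$ exists.

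Second, I would note that Corollary~\ref{c.beta-lan-pce} already asserts that $L_\beta$ is a \pce language; it is prefix-closed because $\bge^{*}(1)$ dominates all its own prefixes in the lexicographic order, and extendable because appending a $0$ preserves the Parry admissibility condition of Proposition~\ref{p.beta-GNS-lan}. Hence the hypotheses of Theorem~\ref{t.cp-erg-pce} are met with $\alpha=\beta$: the GNS is exponential with $G_\ell\sim K\xmd\beta^{\ell}$ and $L_\beta=L_{G_\beta}$ is \pce.

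Finally, applying Theorem~\ref{t.cp-erg-pce} directly yields that $\CP[\beta]$ exists and
\begin{equation}
\CP[\beta]=\frac{\beta}{\beta-1}\eqpnt\notag
\end{equation}
There is no real obstacle here, since all the work has been done upstream: the ergodic machinery of \secti{pro-cp-erg} supplies existence, and Corollary~\ref{c.car-pro-fil} (together with the fact that the local growth rate of $L_\beta$ equals $\beta$, which itself follows from $G_\ell\sim K\xmd\beta^\ell$ and \lemme{cal-cla}) pins down the value. The only mildly delicate point, already handled by Corollary~\ref{c.beta-lan-pce}, is to verify the \pce property of $L_\beta$ from Parry's characterization, so that Theorem~\ref{t.cp-erg-pce} applies and not merely Corollary~\ref{c.exp-uni-erg-2}, which would give existence without identifying the numerical value.
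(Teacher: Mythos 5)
Your proof is correct and follows the paper's intended route exactly: the paper introduces Corollary~\ref{c.beta-cp} with the remark that ``the last two propositions and the results of \secti{pro-cp-erg} immediately imply the following,'' i.e.\ exponentiality of $G_\beta$ from \propo{beta-GNS-exp}, the \pce property of $L_\beta$ from \corol{beta-lan-pce}, and then \theor{cp-erg-pce} to conclude. Your additional remarks unwinding \theor{cp-erg-pce} (unique ergodicity via \theor{exp-uni-erg}, the value pinned down via \corol{car-pro-fil}) are just a correct elaboration of what that theorem already packages.
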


The value of the carry propagation can also be computed directly
from~\equnm{gns-cp-cal-2} according to the self-overlapping properties
of the quasi-greedy expansion of~$1$ $\qge=(d_i)_{i \ge 1}$.
We develop below two examples borrowed from~\cite{BaraGrab16}.

\begin{example}
\label{e.dom-ergo}%
This first example is the case where the quasi-greedy expansion 
of~$1$ is such that no left factor $d_1\cdots d_m$ of $\qge$ 
has a right-factor of the form $d_1\cdots d_k$, $k <m$ ---
this is the case for instance when $d_1 > d_j$ for every $j \ge 2$.
This implies in particular, with the notation of \remar{gns-cp-cal},
that $J(k)=0$ for all $k \ge 1$.
In~\cite[Example~5]{BaraGrab16}, the measure of cylinders is computed  
for this case and expressed by the following:
\begin{equation}
	\fa k\geq1\quantsp
	\mub(\cyli{d_1 \cdots d_k})=(\beta-1)\xmd\beta^{-k-1}
\eqpnt
\eee
\label{q.dom-erg}
\end{equation} 
Since the derivation of the series expansion of
\begin{equation}
\frac{1}{\beta-1}=\sum_{k\geq1}\frac{1}{\beta^{k}}
\ee\text{yields}\ee
\frac{1}{(\beta-1)^{2}}=\sum_{k\geq1}\frac{k}{\beta^{k+1}}
\eqvrg
\notag
\end{equation} 
Equations~\equnm{dom-erg} and~\equnm{gns-cp-cal-2}together gives: 
\begin{equation}
\CP[\beta] = 1 + \sum_{ k \ge 1}k\xmd\mub(\cyli{d_1 \cdots d_k})
           = 1 + (\beta-1)\sum_{ k \ge 1}\frac{k}{\beta^{k+1}}
		   = \frac{\beta}{\beta-1}
\eqpnt
\notag
\end{equation}
\end{example}

\begin{example}
\label{e.tri-bon-erg}%
\textbf{The Tribonacci numeration system}. 
Let~$\psi$ be the zero greater than~$1$ of the 
polynomial~$X^3-X^2-X-1$. 
Then $\bge[\psi](1)=111$ and $\qge[\psi]=(110)^\omega$ ($\psi$ is what 
is called below a \emph{simple Parry number}).
It follows that in this case,
$J(1)=0$, $J(2)=1$ and $J(k)=k-3$ for all $k \ge 3$.

In~\cite[Example~2]{BaraGrab16}, the measure of cylinders is computed  
for this case and given by the following:
\begin{equation}
\mup(\cyli{d_1})=1-\psi^{-1}
\ee\text{and}\ee
\fa k\geq 2\quantsmsp
\mup(\cyli{d_1\cdots d_k})=\psi^{-k-1}
\eqpnt
\notag
\end{equation}
\equat{gns-cp-cal-2} then becomes: 
\begin{align}
\CP[\psi]  & = 1 
   + \mup(\cyli{d_1}) + \mup(\cyli{d_1d_2})
   + 3 \sum_{k\ge 3} \mup(\cyli{d_1 \cdots d_k})
\notag\\
  & = 1 + (1-\frac{1}{\psi})+\frac{1}{\psi^3}
        + 3 \sum_{k\ge 3}\frac{1}{\psi^{k+1}}
    = 2 - \frac{1}{\psi}+\frac{1}{\psi^3}
	    + \frac{3}{\psi^3(\psi-1)}
\notag\\
& = \frac{2\xmd\psi^{4}-3\psi^{3}+\psi^{2}+\psi+2}{\psi^3(\psi-1)}
  = \frac{\psi^{4}+(\psi-2)(\psi^{3}-\psi^{2}-\psi-1)}{\psi^3(\psi-1)}
  = \frac{\psi}{\psi-1}
  \e
\notag
\end{align}
since~$\psi^{3}-\psi^{2}-\psi-1=0$.
\end{example}

It may seem frustrating that  these computations of~$\CPG$ are 
conducted precisely in cases where the result is already known but, 
on the other hand, it is interesting to consider these cases where 
\emph{two completely different computation methods} may be conducted 
(and luckily give the same result).
Along the same line, we finally say a word on numeration systems which
are relevant to both methods of~\secti{car-pro-rat} and that
of~\secti{erg-poi-vie}.
The latter put into light, of course, the continuity, or 
non-continuity, of the odometer.
We begin with some more definitions and results.

\begin{definition}[\cite{FrouSaka10}]
A real number~$\beta$ greater than~$1$ is called a \emph{Parry 
number} if the  
$\beta$-expansion of~$1$ is finite or infinite eventually periodic.
If the $\beta$-expansion of~$1$ is finite, then~$\beta$ is called a
\emph{simple Parry number}.
\end{definition}

\begin{proposition}
\label{p.Par-num}%
Let~$\beta$ be a Parry number.

\thi \emph{\cite{FrouSolo96}}
The language~$\Lb$ is \emph{rational}.

\thii \emph{\cite{FrouSolo96}}
The automaton~$\Ac_{\beta}$ which recognizes the 
language~$0^{*}\xmd\Lb$ is \emph{strongly connected}.

\thiii \emph{\cite[Proposition~7.2.21]{Loth02}} $\beta$ is \emph{the dominant
root} of the characteristic polynomial of the adjacency matrix 
of~$\Ac_\beta$. 

\thiv \emph{\cite{GrabEtAl95}}
The odometer~$\odof[\!\beta]$ is \emph{continuous} if and only if~$\beta$ is 
a \emph{simple} Parry number.
\end{proposition}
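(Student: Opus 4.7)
The plan is to build an explicit finite automaton $\Ac_\beta$ recognizing $0^{*}\Lb$, the so-called \emph{Parry automaton}, and then to read the four assertions off its structure together with Perron--Frobenius theory.

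For parts (i) and (ii), the starting point is Proposition~\ref{p.beta-GNS-lan}. Because $\beta$ is a Parry number, the quasi-greedy expansion $\qgeb = d_1 d_2 \cdots$ is eventually periodic, hence has only finitely many distinct suffixes. Take these suffixes as states $q_1, q_2, \ldots$, declare $q_1$ initial and every state terminal, and define transitions out of $q_i$ by: read $a$ with $a < d_i$ and jump to $q_1$; read $a = d_i$ and move to $q_{i+1}$; while $a > d_i$ has no outgoing transition. A direct induction using Proposition~\ref{p.beta-GNS-lan} shows that the accepted language is exactly $0^{*}\Lb$, which yields (i). For (ii), from any state $q_i$ one reaches $q_1$ by reading a bounded run of $0$'s (using that $\qgeb$ contains infinitely many nonzero digits, so we eventually land on some $q_j$ with $d_j > 0$ from which reading one more $0$ jumps to $q_1$); conversely, $q_1$ reaches $q_i$ along the path labeled $d_1 d_2 \cdots d_{i-1}$. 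Hence $\Ac_\beta$ is strongly connected.

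Part (iii) is then a clean application of Perron--Frobenius: the adjacency matrix of $\Ac_\beta$ is nonnegative and, by (ii), irreducible, so it admits a unique eigenvalue of maximal modulus, which is real positive. By Proposition~\ref{p.beta-GNS-exp}, $G_\ell \sim K\beta^\ell$, and since $G_\ell = \bfv[\Lb]{\ell}$ counts paths of bounded length in $\Ac_\beta$, the spectral radius of the adjacency matrix is exactly $\beta$. The Perron eigenvalue therefore equals $\beta$.

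Part (iv) is the subtle one and is where I expect the main obstacle to lie. The odometer $\odof[\beta]$ fails to be continuous at a point $s \in \Kc_\beta$ precisely when $\capr[\beta]{s_{[j,0]}}$ is unbounded as $j \to \infty$, equivalently when right-factors of $s$ match arbitrarily long maximal words $g_k$. When $\beta$ is a simple Parry number, $\bge(1) = t_1 \cdots t_m$ is finite and the induced self-similar structure of the words $g_k$, governed by the finite word $\bge(1)$, imposes a uniform bound on the carry between successive truncations $s_{[j,0]}$, forcing continuity of $\odof[\beta]$. In the non-simple case $\bge(1)$ is infinite eventually periodic; the non-periodic initial fragment then permits an explicit construction of a point $s$ along whose truncations the carry grows without bound, witnessing a discontinuity. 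Turning this qualitative ``finite versus infinite $\bge(1)$'' dichotomy into a sharp uniform carry bound on one side and a concrete bad point on the other is the main technical difficulty, and the place where the simple Parry hypothesis enters in an essential, not merely cosmetic, way.
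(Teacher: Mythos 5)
The paper itself does \emph{not} prove this proposition; each of the four parts is stated with a citation to the literature (Frougny--Solomyak, Lothaire, Grabner--Liardet--Tichy), so there is no proof in the paper to compare against --- you are supplying arguments the paper takes for granted.

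Parts (i) and (ii) follow the standard construction of the Parry automaton and your argument is essentially correct. For part (iii), Perron--Frobenius is the right tool, but strong connectivity (irreducibility) alone only gives that the spectral radius is an eigenvalue; ``dominant root'' in the sense needed here (a unique eigenvalue of maximal modulus) also requires primitivity. This does hold because the initial state $q_1$ carries a self-loop on the letter $0$ (since $d_1\geq 1$), which makes the gcd of cycle lengths equal to $1$, but your write-up jumps past this step.

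Part (iv) is where there is a genuine error. You claim that $\odof[\!\beta]$ fails to be continuous at $s\in\Kc_\beta$ precisely when the truncation carries $\capr[\beta]{s_{[j,0]}}$ are unbounded, equivalently when $s$ has arbitrarily long right-factors of the form $g_k$. That criterion is false, and the paper's own Example~\ref{e.Fina-erg} refutes it. There $\theta$ is a non-simple Parry number and $\odof[\theta]$ is discontinuous at $s=\lom{1}$. But the truncations are $s_{[j,0]}=1^{j+1}$, with $\Succ[\theta]{1^{j+1}}=1^{j}2$, so $\capr[\theta]{s_{[j,0]}}=1$ for every $j$; and no right-factor of $\lom{1}$ equals any $g_k=2\xmd 1^{k-1}$. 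Your criterion would therefore predict \emph{continuity} at $\lom{1}$, the opposite of what holds. What actually produces the discontinuity is a sequence $s_n\to s$ (here $s_n=\lomz 2\xmd 1^{n}$) whose carries $\capr[\theta]{s_n}=n+2$ are unbounded and whose images $\odof[\theta](s_n)$ converge to a point ($\lomz$) different from $\odof[\theta](s)=\lom{1}\xmd 2$. Even that is not sufficient on its own: in integer base $p$, the sequence $\lomz(p-1)^{n}\to\lom{(p-1)}$ also has unbounded carries, yet $\odof[p]$ is continuous there because $\odof[p](\lom{(p-1)})=\lomz$ coincides with the limit of the images. So the continuity dichotomy is more delicate than your sketch suggests: what matters is the interplay between the cluster points of $(\lomz g_k)_k$ in $\Kc_\beta$ and the value the odometer takes there, not a bound on truncation carries. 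The qualitative ``finite versus infinite $\bge(1)$'' split you invoke is the right one, but the mechanism you describe does not deliver it.
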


It follows from (i)--(iii) that if~$\beta$ is a Parry number, 
then~$\Lb$ and all its quotients are (rational, \pce, and) \dev 
languages with~$\beta$ as local growth rate and \theor{cp-rat-lan} 
yields an algebraic proof of the following particular case of 
\corol{beta-cp}.

\begin{corollary}
\label{c.bet-num}%
If~$\beta$ is a Parry number, then~$\CP[\beta]$ exists and
$\msp\displaystyle{\CP[\beta]= \frac{\beta}{\beta-1}}\msp$.
\end{corollary}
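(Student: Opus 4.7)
The plan is to derive Corollary~\ref{c.bet-num} as a direct application of Theorem~\ref{t.a-dev-cp} (of which Theorem~\ref{t.cp-rat-lan} is a special case) to~$\Lb$. To invoke that theorem, I need to check three things: that~$\Lb$ is a rational \pce language, that~$\Lb$ is \adev with modulus~$\beta$, and that every quotient of~$\Lb$ whose modulus equals~$\beta$ is also \adev. Once these hypotheses are in place, Theorem~\ref{t.a-dev-cp} delivers both the existence of $\CP[\beta]$ and the value $\beta/(\beta-1)$.

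The first point is immediate: Proposition~\ref{p.Par-num}(i) gives that $\Lb$ is rational, and Corollary~\ref{c.beta-lan-pce} gives that it is \pce. For the second point, let $\Ac_\beta$ be the canonical automaton recognizing $0^*\Lb$ and let $M_\beta$ be its adjacency matrix. By Proposition~\ref{p.Par-num}(iii), $\beta$ is the dominant root of the characteristic polynomial $\pol[\Ac_\beta]$, that is, the unique eigenvalue of $M_\beta$ of maximal modulus, and it is simple. The minimal polynomial $\pol[\Lb]$ divides $\pol[\Ac_\beta]$, so its zeroes form a subset of the eigenvalues of $M_\beta$; since $\uL[\Lb]{\ell}$ grows on the order of $\beta^\ell$ (using the Perron eigenvector for the strongly connected matrix~$M_\beta$), the number~$\beta$ must itself be a zero of $\pol[\Lb]$. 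Hence $\Lb$ is \dev with dominating eigenvalue $\beta$, and a fortiori \adev.

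The crucial point is the third one, concerning the quotients, and it is here that the strong connectedness of~$\Ac_\beta$ provided by Proposition~\ref{p.Par-num}(ii) plays its role. Any quotient $w^{-1}\Lb$ is the language accepted by the automaton obtained from $\Ac_\beta$ by taking as initial state $q_0\act w$. Because $\Ac_\beta$ is strongly connected, the accessible part of this new automaton is again all of $\Ac_\beta$, so its adjacency matrix is still $M_\beta$. Consequently, the minimal polynomial of $w^{-1}\Lb$ divides $\pol[\Ac_\beta]$ just as $\pol[\Lb]$ does, and the eigenvalues of $w^{-1}\Lb$ are a subset of those of $M_\beta$. In particular, if the modulus of $w^{-1}\Lb$ equals~$\beta$ then $\beta$ is still a simple eigenvalue of maximal modulus of $w^{-1}\Lb$ (it is the unique such eigenvalue among all of $M_\beta$'s eigenvalues), so $w^{-1}\Lb$ is \dev and in particular \adev.

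With the three hypotheses verified, Theorem~\ref{t.a-dev-cp} applies and yields $\CP[\beta] = \beta/(\beta-1)$. The main obstacle is really point three: one needs to be attentive to the distinction between the eigenvalues of $M_\beta$ and those of the minimal polynomial of a quotient, and to use the strong connectedness (plus the Perron--Frobenius behaviour it induces) to ensure that $\beta$ survives as a simple dominating eigenvalue of each quotient that has modulus~$\beta$. Everything else is a direct reading of the previously established propositions.
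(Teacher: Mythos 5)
Your proposal is correct and matches the paper's own route: the paper likewise derives the corollary from Proposition~\ref{p.Par-num}(i)--(iii), using strong connectedness of~$\Ac_{\beta}$ and the Perron--Frobenius dominant root~$\beta$ to conclude that~$\Lb$ and all its quotients are \dev with local growth rate~$\beta$, then applies Theorem~\ref{t.cp-rat-lan} (which the paper has already observed is a special case of Theorem~\ref{t.a-dev-cp}). Invoking Theorem~\ref{t.a-dev-cp} directly, as you do, is cosmetically different but substantively identical, since strong connectedness makes every quotient share the same adjacency matrix and hence the same dominating eigenvalue.
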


If~$\beta$ is a simple Parry number, it follows from~(iv) that, 
conversely, the Ergodic Theorem directly implies the existence of the 
carry propagation of~$\Lb$ (\via \propo{odo-cont}).

\begin{example}
[\examp{Fib-1} continued]
\label{e.Fib-erg}%
The Fibonacci numeration system is the canonical GNS
associated with the golden mean~$\varphi$.
Since $\bge[\varphi](1)=1\xmd1$, $\varphi$ is a simple Parry number. 
The set of greedy expansions of the natural integers is 
$L_{\varphi}=1\{0,1\}^* \setminus\{0,1\}^*11\{0,1\}^*
\cup\{\varepsilon\}$. 
The automaton below recognizes~$0^*L_{\varphi}$.

The compactification of~$\lomz L_{\varphi}$ is
\begin{equation}
\Kc_{\varphi}=\Kc_{L_{\varphi}}=
\Defi{s=(\cdots s_2 s_1 s_0)\in\lom{\{0,1\}}}%
{\forall j\quantsmsp s_{[j,0]}=s_{j}\cdots s_{0}\prec(10)^\omega}
\eqpnt
\notag
\end{equation}
For instance:
$\odo[\varphi]{\lom{(0\xmd1)}\xmd0(0\xmd1)^n}
=\lom{(0\xmd1)}0\xmd1\xmd0^{2n-1}$. 
On the other hand,
\begin{equation}
\odo[\varphi]{\lom{(0\xmd1)}} 
       = \lim_{n\to\infty}\Succ[\varphi]{(0\xmd1)^n} 
       = \lim_{n\to \infty}1\xmd0^{2n-1}= \lomz
\eqpnt
\notag
\end{equation}
This illustrates the fact that the 
odometer~$\odof[\varphi]$ is
continuous.

\begin{figure}[h]
\centering
\VCDraw{%
\begin{VCPicture}{(-1,-1.5)(6,1.5)}
\State[]{(5,0)}{1}
\State[]{(0,0)}{0}
\Final[s]{0}
\Final[s]{2}
\Final[s]{1}
\Initial[w]{0}
\ArcR{0}{1}{1}
\ArcR{1}{0}{0}
\LoopN{0}{0}
\end{VCPicture}%
}
\label{f.autom-fib}
\end{figure}

\end{example}

\begin{example}
[\examp{Fi-na} continued]
\label{e.Fina-erg}%
The Fina numeration system is
the canonical GNS associated with
$\theta=\frac{3+\sqrt{5}}{2}$. 
Since  $\qge[\theta]=2\xmd 1^\omega$,
$\theta$ is a Parry number which is not simple.
We have:
\begin{equation}
\odo[\theta]{\lom{1}} = \lim_{n\to\infty} \Succ[\theta]{1^{n}}
                 = \lim_{n\to\infty} 1^{n-1}2 = \lom{1}\xmd 2
\eqpnt
\notag
\end{equation}
On the other hand, let us consider
the sequence $\msp(w^{(n)})_n=(\lomz2\xmd1^n)_n\msp$.
We have: 
$\msp\lim_{n\to \infty}(w^{(n)})_n=\lom{1}\msp$, and, for each~$n$, 
$\odo[\theta]{w^{(n)}}=\lom{1}\xmd0^{n+1}$, which tends
to $\lomz$, thus the odometer $\odof[\theta]$ is not continuous.
\end{example}


%
\addcontentsline{toc}{section}{References}  
%


\end{document}